\documentclass[12 pt,reqno]{amsart}
\pagestyle{plain}

\usepackage{pdfpages}
\usepackage{amsmath,amsthm,amssymb,amscd}
\usepackage{setspace}
\usepackage{upgreek}

\usepackage{cite}
\usepackage{url}
\usepackage{mathtools}
\usepackage{fullpage}
\usepackage{float}
\usepackage{comment}

\usepackage[bookmarksopen,bookmarksdepth=3]{hyperref}

\input xy
\xyoption{all}

\usepackage{microtype}

\hyphenation{Wel-schin-ger Ge-or-gi-e-va}

\allowdisplaybreaks[1]

\setcounter{tocdepth}{3}

\newtheorem{thm}{Theorem}
\newtheorem{prop}{Proposition}[section]
\newtheorem{lm}[prop]{Lemma}

\newtheorem{cl}[prop]{Proposition}

\theoremstyle{definition}
\newtheorem{dfn}[prop]{Definition}

\theoremstyle{remark}
\newtheorem{rem}[prop]{Remark}
\newtheorem{ex}[prop]{Example}

\DeclareMathOperator{\Id}{Id}

\DeclareMathOperator{\rdim}{rdim}

\newcommand{\lrarr}{\longrightarrow}

\newcommand{\R}{\mathbb{R}}

\newcommand{\Z}{\mathbb{Z}}

\newcommand{\M}{\mathcal{M}}

\renewcommand{\P}{\mathbb{C}P}
\renewcommand{\L}{\Lambda}

\newcommand{\mI}{\mathcal{I}}

\newcommand{\Hh}{\widehat{H}}

\newcommand{\qkl}{\mathfrak{q}_{k,l}}
\newcommand{\q}{\mathfrak{q}}

\newcommand{\m}{\mathfrak{m}}
\renewcommand{\d}{\partial}

\newcommand{\mR}{\mathfrak{R}}

\newcommand{\I}{[0,1]}
\newcommand{\at}{\tilde{\alpha}}

\newcommand{\xit}{\tilde{\xi}}
\newcommand{\etat}{\tilde{\eta}}

\newcommand{\Mt}{\widetilde{\M}}

\newcommand{\gt}{\tilde\gamma}
\newcommand{\mt}{\tilde\m}
\newcommand{\qt}{\tilde\q}

\newcommand{\mC}{\mathfrak{C}}
\newcommand{\mD}{\mathfrak{D}}
\newcommand{\evbt}{\widetilde{evb}}
\newcommand{\evit}{\widetilde{evi}}
\newcommand{\evt}{\widetilde{ev}}

\newcommand{\mg}{\m^{\gamma}}
\newcommand{\mgp}{\m^{\gamma'}}
\newcommand{\mgt}{\mt^{\gt}}

\renewcommand{\ll}{\langle\!\langle}
\renewcommand{\gg}{\rangle\!\rangle}

\newcommand{\RP}{\mathbb{R}P}

\newcommand{\sly}{\Pi}
\newcommand{\pr}{\varpi}
\newcommand{\A}{\overline{\!A}\mbox{}}

\newcommand{\e}{{\bf{e}}}
\newcommand{\mbar}{\bar{\m}}
\newcommand{\lp}{{\prec}}
\newcommand{\rp}{{\succ}}
\newcommand{\cC}{\mathcal{C}}
\newcommand{\lpt}{{\preccurlyeq}}
\newcommand{\rpt}{{\succcurlyeq}}
\renewcommand{\a}{\alpha}

\newcommand{\s}{\mathfrak{s}}

\newcommand{\uu}{\mathbf{u}}

\newcommand{\Ac}{A_c}

\title{Differential forms, Fukaya $A_\infty$ algebras, and Gromov-Witten axioms}
\keywords{$A_\infty$ algebra, differential form, Gromov-Witten axioms, $J$-holomorphic, Lagrangian submanifold, stable map}
\subjclass[2020]{53D37, 53D45 (Primary) 58A10, 53D12, 32Q65 (Secondary)}

\date{March 2023}

\author[J. Solomon]{Jake P. Solomon}
\address{Institute of Mathematics\\ Hebrew University, Givat Ram\\Jerusalem, 91904, Israel } \email{jake@math.huji.ac.il}
\author[S. Tukachinsky]{Sara B. Tukachinsky}
\address{School of Mathematical Sciences\\ Tel Aviv University\\Tel Aviv, 6997801, Israel }\email{sarabt1@gmail.com}

\begin{document}

\begin{abstract}
Consider the differential forms $A^*(L)$ on a Lagrangian submanifold $L \subset X$. Following ideas of Fukaya-Oh-Ohta-Ono, we construct a family of cyclic unital curved $A_\infty$ structures on $A^*(L),$ parameterized by the cohomology of $X$ relative to $L.$ The family of $A_\infty$ structures satisfies properties analogous to the axioms of Gromov-Witten theory. Our construction is canonical up to $A_\infty$ pseudoisotopy. We work in the situation that moduli spaces are regular and boundary evaluation maps are submersions, and thus we do not use the theory of the virtual fundamental class.
\end{abstract}

\maketitle

\tableofcontents

\section{Introduction}

In the beautiful series of papers~\cite{Fukaya,Fukaya2,FOOO1,FOOOtoricI,FOOOtoricII}, Fukaya and Fukaya-Oh-Ohta-Ono reworked and extended in the language of differential forms the theory of $A_\infty$ algebras associated to Lagrangian submanifolds from their book~\cite{FOOO}. With the help of this new tool, they obtained many striking results in Floer theory and mirror symmetry. They work in a very general setting, and introduce fundamental new ideas in the theory of the virtual fundamental class to address the technical difficulties that arise.

The present paper uses differential forms to construct a family of cyclic unital curved $A_\infty$ algebras associated to a Lagrangian submanifold. We consider Lagrangian submanifolds that satisfy an analog of the convex condition in algebraic geometry~\cite{FultonPandharipande}, so the construction can be made without using virtual fundamental class techniques.

Our family of $A_\infty$ algebras is parameterized by the cohomology of $X$ relative to $L$, as opposed to absolute cohomology of $X$ as found in the literature.
The family satisfies differential equations analogous to the fundamental class and divisor axioms of Gromov-Witten theory.
Our definition of unitality is stronger than the standard one. The use of relative cohomology is of crucial importance for proving unitality and the divisor equation.

We use the framework developed here in~\cite{ST2,ST3} to define open Gromov-Witten invariants and establish their properties. For this purpose, we also include a discussion of the operator $\m_{-1}$ as defined in~\cite{Fukaya2}.

\subsection{Setting}\label{ssec:setting}

Consider a symplectic manifold $(X,\omega)$ with $\dim_{\R}X=2n$, and a connected Lagrangian submanifold $L$ with relative spin structure~$\s =\s_L.$ For the definition of relative spin structure, see~\cite[Definition 8.1.2]{FOOO} and \cite[Definition 3.1.2(c)]{WehrheimWoodward}.
Let $J$ be an $\omega$-tame~\cite[p.2]{MS} almost complex structure on $X$. Denote by $\mu:H_2(X,L) \to \Z$ the Maslov index as in~\cite[Section~2]{CieliebakGoldstein}. See also~\cite[Appendix]{Banyaga} and references therein.
Let $\sly$ be a quotient of $H_2(X,L;\Z)$ by a possibly trivial subgroup contained in the kernel of the homomorphism $\omega \oplus \mu : H_2(X,L;\Z) \to \R \oplus \Z.$ Thus the homomorphisms $\omega,\mu,$ descend to $\sly.$ Denote by $\beta_0$ the zero element of $\sly.$ We use a Novikov ring $\L$ which is a completion of a subring of the group ring of $\sly$. The precise definition follows. Denote by $T^\beta$ the element of the group ring corresponding to $\beta \in \sly$, so $T^{\beta_1}T^{\beta_2} = T^{\beta_1 + \beta_2}.$ Then,
\[
\L=\left\{\sum_{i=0}^\infty a_iT^{\beta_i}\,\bigg|\;a_i\in\R, \beta_i\in \sly, \omega(\beta_i)\ge 0,\; \lim_{i\to \infty}\omega(\beta_i)=\infty\right\}.
\]
A grading is defined on $\L$ by declaring $T^\beta$ to be of degree $\mu(\beta).$

For $k\ge  -1,$ denote by $\M_{k+1,l}(\beta)$ the moduli space of genus zero $J$-holomorphic open stable maps to $(X,L)$ of degree $\beta \in \sly$ with one boundary component, $k+1$ boundary marked points, and $l$ interior marked points. The boundary points are labeled according to their cyclic order. Denote by
$evb_i^\beta:\M_{k+1,l}(\beta)\to L,$ and
$evi_j^\beta:\M_{k+1,l}(\beta)\to X,$ the boundary and interior evaluation maps respectively, where $ i=0,\ldots,k,$ and $j=1,\ldots, l$.
Assume that $\M_{k+1,l}(\beta)$ is a smooth orbifold with corners. Then it carries a natural orientation induced by the relative spin structure on $(X,L)$, as in~\cite[Chapter 8]{FOOO}. Assume in addition that $evb_0^\beta$ is a proper submersion. See Example~\ref{rem:assumptions} and Remark~\ref{rem:assumptionsgeneral} for a discussion and examples of when these assumptions hold. See Section~\ref{ssec:moc} for background on orbifolds with corners and Section~\ref{ssec:osm} for background on open stable maps.

For any manifold $M$, possibly with corners, denote by $A^*(M)$ the algebra of smooth differential forms on $M$ with coefficients in $\R$.
For $m>0$, denote by
$A^m(X,L)$ the smooth differential $m$-forms on $X$ that pull back to zero on $L$, and denote by $A^0(X,L)$ the functions on $X$ that are constant on $L$. The exterior derivative $d$ makes $A^*(X,L)$ into a complex.

Let $t_0,\ldots,t_N,$ be formal variables with degrees in $\Z$.
Define graded-commutative rings
\[
R:=\L[[t_0,\ldots,t_N]],\quad Q:=\R[t_0,\ldots,t_N] ,
\]
thought of as differential graded algebras with trivial differential.
Set
\[
C:= A^*(L)\otimes R,\quad\text{and}\quad D:= A^*(X,L)\otimes Q ,
\]
where $\otimes$ is understood as the completed tensor product of differential graded algebras.
Write $\Hh^*(X,L;Q)=H^*(D).$ The gradings on $C,D,$ and $\Hh^*(X,L;Q)$, take into account the degrees of $t_j,T^\beta,$ and the degree of differential forms.

Define a valuation
\[
\nu:R\lrarr \R,
\]
by
\[
\nu\left(\sum_{j=0}^\infty a_jT^{\beta_j}\prod_{i=0}^Nt_i^{l_{ij}}\right)
= \inf_{\substack{j\\a_j\ne 0}} \left(\omega(\beta_j)+\sum_{i=0}^N l_{ij}\right).
\]
The valuation $\nu$ induces a valuation on $Q, C, D,$ and their tensor products, which we also denote by $\nu$.
Define $\mI_R: = \{\alpha\in R\,|\,\nu(\alpha)>0\},$ and similarly $\mI_Q: = \{\alpha\in Q\,|\,\nu(\alpha)>0\}$. Let $\overline{R}: = R/\mI_R = \R$ and
\[
\overline{C} : = C/(\mI_R C) = A^*(L).
\]

\subsection{Statement of results}\label{ssec:sor}

Let $\mathcal R$ be a differential graded algebra over $\R$ with valuation $\varsigma_{\mathcal R}$ and let $\cC$ be a graded module over $\mathcal R$ with valuation $\varsigma_{\cC}.$ We implicitly assume elements of graded rings and modules are of homogeneous degree and denote the degree by $| \cdot |.$ Let $\delta_{i,j}$ denote the Kronecker delta.

\begin{dfn}\label{dfn:cycunit}
An $n$-dimensional (curved) \textbf{cyclic unital $A_\infty$ structure} on $\cC$ is a triple $(\{\m_k\}_{k\ge 0},\lp\;,\,\rp,\e)$ of
maps $\m_k:\cC^{\otimes k}\to \cC[2-k]$, a pairing $\lp\;,\,\rp:\cC\otimes \cC\to \mathcal{R}[-n]$, and an element $\e\in \cC$ with $|\e|=0$, satisfying the following properties. We denote by $\alpha,$ possibly with subscripts, an element of $\cC,$ and by $a$ an element of $\mathcal{R}.$
\begin{enumerate}
		\item\label{it:lin}
		The operations $\m_k$ are $\mathcal{R}$-multilinear in the sense that
		\[
		\m_k(\alpha_1,\ldots,\alpha_{i-1},a\cdot\alpha_i,\ldots,\alpha_k)=
		(-1)^{|a|\cdot\big(i+\sum_{j=1}^{i-1}|\a_j|\big)}
		a\cdot\m_k(\alpha_1,\ldots,\alpha_k)+\delta_{1,k}\cdot da\cdot\alpha_1.
		\]
		\item\label{it:plin}
		The pairing $\lp\;,\,\rp$ is $\mathcal{R}$-bilinear in the sense that
		\[
		\lp a\cdot\a_1,\a_2\rp=a\lp\a_1,\a_2\rp,\quad
		\lp\a_1,a\cdot\a_2\rp=(-1)^{|a|\cdot (1+|\a_1|)}a\lp\a_1,\a_2\rp.
		\]
    \item\label{it:a_infty}
    The $A_\infty$ relations hold:
    \[
    \sum_{\substack{k_1+k_2=k+1\\1\le i\le k_1}}(-1)^{\sum_{j=1}^{i-1}(|\alpha_j|+1)}
\m_{k_1}(\alpha_1,\ldots,\alpha_{i-1},\m_{k_2}(\alpha_i,\ldots,\alpha_{i+k_2-1}), \alpha_{i+k_2},\ldots,\alpha_k)=0.
    \]
    \item\label{it:val}
    $\varsigma_{\cC}(\m_k(\alpha_1,\ldots,\a_k))\ge \sum_{j=1}^k\varsigma_{\cC}(\alpha_j)$ and $\varsigma_{\cC}(\m_0) > 0.$
    \item\label{it:val2}
    $\varsigma_{\mathcal R}(\lp\alpha_1,\alpha_2\rp)\ge\varsigma_{\cC}(\alpha_1) +\varsigma_{\cC}(\alpha_2).$
    \item\label{it:symm}
    $\lp \alpha_1,\alpha_2\rp=(-1)^{(|\alpha_1|+1)(|\alpha_2|+1)+1} \lp\alpha_2,\alpha_1\rp$.
	\item\label{it:cyclic}
     The pairing is cyclic:
	\begin{multline*}
    \qquad\lp \m_k(\alpha_1,\ldots,\alpha_k),\alpha_{k+1}\rp=\\
	=(-1)^{(|\alpha_{k+1}|+1)\sum_{j=1}^k(|\alpha_j|+1)}
	\lp\m_k(\alpha_{k+1},\alpha_1,\ldots,\alpha_{k-1}),\alpha_k\rp+
	\delta_{1,k}\cdot d\lp\alpha_1,\alpha_2\rp.
    \end{multline*}
	\item\label{it:unit1}
	$\m_k(\alpha_1,\ldots,\alpha_{i-1},\e,\alpha_{i+1},\ldots,\alpha_k)=0 \quad \forall k\ne 0,2.$
    \item\label{it:unit2}
    $\lp \m_0,\e\rp=0$.
	\item\label{it:unit3}
	$\m_2(\e,\alpha)=\alpha=(-1)^{|\alpha|}\m_2(\alpha,\e).$
\end{enumerate}
\end{dfn}
\begin{rem}
The intuition behind the signs of properties~\eqref{it:lin},~\eqref{it:plin},~\eqref{it:a_infty}, and~\eqref{it:cyclic}, is that we consider the shifted degree of elements of $\cC$, and the shifted degree of the operators $\m_k$, which is $1$. Thus, ``passing'' $a\in\mathcal{R}$ ``through'' $\alpha\in \cC$ contributes $(-1)^{|a|\cdot(|\alpha|+1)}$,``passing'' $a$ through $\m_k$ adds $(-1)^{|a|}$, and ``passing'' $\m_k$ through $\a$ adds $(-1)^{|\a|+1}$.
The sign in property~\eqref{it:symm} reflects the fact that the pairing is graded anti-symmetric.
\end{rem}
\begin{rem}
Our definition differs from that of~\cite{FOOOtoricI, FOOOtoricII, Fukaya} in that $\m_0$ is required to respect the unit $\e$.
\end{rem}

Equip $R$ with the trivial differential $d_R=0$. Consider the $R$-module $C$.
For $\gamma\in \mI_QD$ with $d\gamma=0$, $|\gamma|=2,$
and $\beta\in\sly$,
define maps
\[
\m^{\gamma,\beta}_k:C^{\otimes k}\lrarr C
\]
by
\[
\m_1^{\gamma,\beta_0}(\alpha)=d\alpha,
\]
and for $k \geq 0$ when $(k,\beta)\ne(1,\beta_0),$ by
\begin{align*}
\m^{\gamma,\beta}_k(\alpha_1,\ldots,\alpha_k):=& (-1)^{\sum_{j=1}^kj(|\alpha_j|+1)+1}
\sum_{l\ge0}\frac{1}{l!}{evb_0^\beta}_* (\bigwedge_{j=1}^k (evb_j^\beta)^*\alpha_j\wedge\bigwedge_{j=1}^l(evi_j^\beta)^*\gamma).
\end{align*}
Define also
\[
\mg_k:C^{\otimes k}\lrarr C
\]
by
\[
\mg_k:=\sum_{\beta\in\sly}T^{\beta}\m^{\gamma,\beta}_k.
\]
Denote by $\langle\;,\;\rangle$ the signed Poincar\'e pairing,
\begin{equation}\label{eq:pairing}
\langle\xi,\eta\rangle:=(-1)^{|\eta|}\int_L\xi\wedge\eta.
\end{equation}
Denote by $1$ the constant function $1\in A^0(L)$. The main results of the paper are the following theorems.

\begin{thm}\label{thm:str}
The triple $(\{\mg_k\}_{k\ge 0},\langle\,,\,\rangle,1)$ is a cyclic unital $A_\infty$ structure on $C$.
\end{thm}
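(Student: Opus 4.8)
The plan is to verify the eight properties of the definition in turn, the genuine content lying in the $A_\infty$ relations~\eqref{it:a_infty}, cyclicity~\eqref{it:cyclic}, and the three unitality properties~\eqref{it:unit1}--\eqref{it:unit3}; the pairing properties~\eqref{it:symm},~\eqref{it:val2} and the valuation estimate~\eqref{it:val} are comparatively routine. For~\eqref{it:symm}, the sign $(-1)^{|\eta|}$ in $\langle\xi,\eta\rangle$ together with graded commutativity of $\wedge$ on $A^*(L)$ gives the claimed symmetry, and the pairing has degree $-n$ since $\int_L$ is nonzero only on $A^n(L)$; for~\eqref{it:val2} and the inequality $\nu(\mg_k(\alpha))\ge\nu(\alpha)$, every contribution with $\beta\ne\beta_0$ raises $\nu$ by $\omega(\beta)>0$, while the contributions with $\beta=\beta_0$ come from constant maps and reduce, by a fiber-dimension count for $evb_0^{\beta_0}$, to $d$ or to the wedge product, both of which respect $\nu$; the insertions of $\gamma\in D$ and the factors $\tfrac1{l!}$ only raise $\nu$. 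For $\nu(\mg_0)>0$, the putative $\beta_0$-term of $\mg_0$ comes from $\M_{1,l}(\beta_0)$, which is stable only for $l\ge1$, where all interior marked points land in $L$; since $\gamma\in A^*(X,L)$ vanishes on $L$, we get $(evi_j^{\beta_0})^*\gamma=0$, so $\m^{\gamma,\beta_0}_0=0$ and $\mg_0=\sum_{\beta\ne\beta_0}T^\beta\m^{\gamma,\beta}_0$ has $\nu>0$. This vanishing of constant-map contributions carrying interior insertions, which recurs below, is where the use of relative cohomology enters.

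The engine for~\eqref{it:a_infty},~\eqref{it:cyclic} and~\eqref{it:unit1}--\eqref{it:unit3} is Stokes' theorem for the pushforward along the proper submersion $evb_0^\beta\colon\M_{k+1,l}(\beta)\to L$, namely $d\,(evb_0^\beta)_*\xi=(evb_0^\beta)_*\,d\xi\pm(evb_0^\beta|_\partial)_*(\xi|_{\partial\M_{k+1,l}(\beta)})$, combined with the description of the codimension-one boundary $\partial\M_{k+1,l}(\beta)$, following~\cite[Chapter 8]{FOOO}, as the union over $k_1+k_2=k+1$, $\beta_1+\beta_2=\beta$, $l_1+l_2=l$ and $1\le i\le k_1$ of the fiber products $\M_{k_1+1,l_1}(\beta_1)\,{}_{evb_i}\!\times_{evb_0}\M_{k_2+1,l_2}(\beta_2)$ glued along the boundary node, with their induced orientations. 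To prove~\eqref{it:a_infty}, apply the identity to $\xi=\bigwedge_{j=1}^k(evb_j^\beta)^*\alpha_j\wedge\bigwedge_{j=1}^l(evi_j^\beta)^*\gamma$, multiply by $\tfrac1{l!}T^\beta$, and sum over $\beta$ and $l$. Because $d\gamma=0$, the term $(evb_0^\beta)_*\,d\xi$ produces only the $d\alpha_j$, and together with the explicit $d\alpha_1$ summand of $\m^{\gamma,\beta_0}_1$ it supplies precisely the $A_\infty$ terms with an inner $\mg_1$, while the left-hand side $d\,(evb_0^\beta)_*\xi$ supplies those with an outer $\mg_1$. The boundary term, after using $\tfrac1{l!}\binom{l}{l_1}=\tfrac1{l_1!\,l_2!}$ to split the $\gamma$-insertions between the two components and resumming, is identified with the remaining terms $\pm\mg_{k_1}(\alpha_1,\ldots,\mg_{k_2}(\ldots),\ldots)$: strata with $\beta_1=\beta_0$ or $\beta_2=\beta_0$ carrying a positive number of interior points on the constant component drop out since $\gamma|_L=0$; strata with $\beta_1=\beta_0$, $l_1=0$, $k_1\ge3$ drop out by a fiber-dimension count; and strata with $\beta_1=\beta_0$, $l_1=0$, $k_1=2$ recover the wedge-product part $\m^{\gamma,\beta_0}_2$. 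The $\beta=\beta_0$ instance of the whole identity is simply $d^2=0$ and the graded Leibniz rule.

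For cyclicity~\eqref{it:cyclic}, the cyclic relabeling of the $k+1$ boundary marked points induces a diffeomorphism of $\M_{k+1,l}(\beta)$ intertwining $evb_i^\beta$ with $evb_{i-1}^\beta$ whose effect on orientation is the sign prescribed in~\cite[Chapter 8]{FOOO}; pushing $\langle\mg_k(\alpha_1,\ldots,\alpha_k),\alpha_{k+1}\rangle=\pm\sum_{\beta,l}\tfrac1{l!}T^\beta\int_{\M_{k+1,l}(\beta)}\bigwedge_j(evb_j^\beta)^*\alpha_j\wedge\bigwedge_j(evi_j^\beta)^*\gamma$ through this symmetry yields the cyclic identity, with a supplementary use of Stokes' theorem on the closed manifold $L$ for the de Rham part of $\mg_1$ when $k=1$ (the term $d\langle\alpha_1,\alpha_2\rangle$ of~\eqref{it:cyclic} being vacuous here since $d_R=0$). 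For unitality, the key is a vanishing lemma: if $\e=1$ is placed in the $m$-th input slot of $\m^{\gamma,\beta}_k$ and either $\beta\ne\beta_0$, or $\beta=\beta_0$ and $k\ge3$, then the term vanishes, since $(evb_m^\beta)^*1=1$ and $evb_0^\beta$ factors through the forgetful map $\varpi\colon\M_{k+1,l}(\beta)\to\M_{k,l}(\beta)$ dropping the $m$-th point, whence $(evb_0^\beta)_*(\varpi^*\eta)=(\overline{evb}_0)_*(\eta\cdot\varpi_*1)=0$ because $\varpi$ has one-dimensional fibers, so $\varpi_*1=0$. This gives~\eqref{it:unit1} at once, and~\eqref{it:unit3} because the only surviving term of $\mg_2(1,\alpha)$ (resp.\ $\mg_2(\alpha,1)$) is $\m^{\gamma,\beta_0}_2$, which, using the computation of the $\beta_0$-contribution on $\M_{3,0}(\beta_0)$ and the chosen normalizing sign $(-1)^{\sum_{j} j(|\alpha_j|+1)+nk+1}$ in the definition of $\m^{\gamma,\beta}_k$, equals $\alpha$ and $(-1)^{|\alpha|}\alpha$ respectively. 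Finally~\eqref{it:unit2} follows from a short computation: $\langle\mg_1(\alpha),1\rangle=\int_L\mg_1(\alpha)$, the $\beta_0$-term is $\int_L d\alpha=0$ by Stokes on $L$, and each $\beta\ne\beta_0$-term vanishes because its integrand does not involve $evb_0^\beta$ and so descends along the forgetful map dropping the $0$-th point, which again has one-dimensional fibers; since $d_R=0$ this equals $d\langle\alpha,1\rangle$.

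The step I expect to be the main obstacle is the sign bookkeeping in~\eqref{it:a_infty} and~\eqref{it:cyclic}: matching the Koszul signs $(-1)^{\sum_j(\deg_{\cC}\alpha_j+1)}$ of the $A_\infty$ relations and of cyclicity against, on one side, the signs produced by permuting the pulled-back forms $\alpha_j$ and $\gamma$ inside $\xi$ and by fiber integration, and, on the other, the orientation of $\partial\M_{k+1,l}(\beta)$ relative to the product orientation on the fiber products; this requires careful use of the orientation conventions of~\cite[Chapter 8]{FOOO} and of the normalizing signs built into the definition of $\m^{\gamma,\beta}_k$. A secondary technical point is checking that every fiber product occurring in $\partial\M_{k+1,l}(\beta)$ is cut out transversally and that all the pushforwards are globally defined, which is guaranteed by the standing assumptions that $\M_{k+1,l}(\beta)$ is a smooth orbifold with corners and that $evb_0$ is a proper submersion.
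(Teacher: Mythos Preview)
Your overall strategy matches the paper's: Stokes' theorem on $\M_{k+1,l}(\beta)$ plus the fiber-product boundary description for~\eqref{it:a_infty}, cyclic relabeling of boundary markings for~\eqref{it:cyclic}, and forgetful maps for the unit properties. The paper organizes this by first proving a general $\q$-relation (Proposition~\ref{q_rel}) and then specializing to $d\gamma=0$; the sign bookkeeping you flag is carried out there.

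There is, however, a gap in your argument for~\eqref{it:unit1}. Your vanishing lemma places $\e=1$ as an \emph{input} of $\m_k$, whereas~\eqref{it:unit1} asserts $\langle\m_k(\alpha_1,\ldots,\alpha_k),1\rangle=0$ with $1$ paired against the \emph{output}. For $k\ge 3$ you can bridge the two via the cyclicity you have already proved, but the case $k=0$ is not covered: $\m_0$ has no input slots, and cyclicity for $k=0$ is vacuous. The paper handles~\eqref{it:unit1} by a different mechanism (Proposition~\ref{no_top_deg}): the degree-$n$ part of $\q_{k,l}^\beta(\alpha;\gamma)$ vanishes for $(k,l,\beta)\notin\{(1,0,\beta_0),(0,1,\beta_0),(2,0,\beta_0)\}$, because the integrand is pulled back along the map forgetting $z_0$---precisely the argument you use for~\eqref{it:unit2}. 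Since $\langle\m_k(\alpha),1\rangle=\pm\int_L(\m_k(\alpha))_n$, this kills everything except possibly $\q_{0,1}^{\beta_0}(;\gamma)=\pm\gamma|_L$, whose degree-$n$ part vanishes because the positive-degree components of $\gamma\in A^*(X,L)$ restrict to zero on $L$.

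In effect you and the paper have swapped tools for~\eqref{it:unit1} and~\eqref{it:unit2}: the paper uses forget-$z_0$ (Proposition~\ref{no_top_deg}) for~\eqref{it:unit1} and cyclicity together with $\m_1(1)=0$ (Proposition~\ref{cl:unit}) for~\eqref{it:unit2}; you attempt the reverse. Your direct computation for~\eqref{it:unit2} is fine, but for~\eqref{it:unit1} you must either invoke cyclicity explicitly and add the $k=0$ case, or use the forget-$z_0$ argument uniformly.
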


Set
\begin{gather}\label{eq:mC}
\mathfrak{R}:=A^*([0,1];R),\notag \\
\mC:=A^*([0,1]\times L;R),\mbox{ and }\mD:=A^*([0,1]\times X,[0,1]\times L;Q).
\end{gather}
The valuation $\nu$ induces valuations on $\mathfrak{R},\mC,$ and $\mD$, which we still denote by $\nu$.
For $t\in[0,1]$ and letting $M$ be either $L$ or the point, denote by
\[
j_t:M\to [0,1]\times M
\]
the inclusion $j_t(p)=(t,p)$.
\begin{dfn}
Let $S_1=(\m,\lp\;,\,\rp,\e)$ and $S_2=(\m',\lp\;,\,\rp',\e')$ be cyclic unital $A_\infty$ structures on $C$.
A cyclic unital \textbf{pseudoisotopy} from $S_1$ to $S_2$ is a cyclic unital $A_\infty$ structure $(\mt,\lpt\;,\rpt,\tilde{\e})$ on the $\mathfrak{R}$-module $\mC$ such that
 for all $\at_j\in \mC$ and all $k\ge 0$,
\begin{gather*}
j_0^*\tilde{\m}_k(\at_1,\ldots,\at_k)=\m_k(j_0^*\at_1,\ldots,j_0^*\at_k),\\
j_1^*\tilde{\m}_k(\at_1,\ldots,\at_k)=\m'_k(j_1^*\at_1,\ldots,j_1^*\at_k),
\end{gather*}
and
\begin{gather*}
j_0^*\lpt\at_1,\at_2\rpt=\lp j_0^*\at_1,j_0^*\at_2\rp,\qquad j_0^*\tilde{\e}=\e,\\
j_1^*\lpt\at_1,\at_2\rpt=\lp j_1^*\at_1,j_1^*\at_2\rp',\qquad j_1^*\tilde{\e}=\e'.
\end{gather*}
\end{dfn}

\begin{thm}\label{thm:isot}
Let $\gamma,\gamma'\in \mI_QD$ be closed with $|\gamma|=|\gamma'|=2$.
If $[\gamma]=[\gamma']\in \Hh^*(X,L;Q),$
then there exists a cyclic unital pseudoisotopy from $(\mg,\langle\;,\,\rangle,1)$ to $(\m^{\gamma'},\langle\;,\,\rangle,1)$.
\end{thm}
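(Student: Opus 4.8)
Since $[\gamma]=[\gamma']\in\Hh^*(X,L;Q)$ and both forms are closed with $\deg_D\gamma=\deg_D\gamma'=2$, write $\gamma'-\gamma=d\xi$ for some $\xi\in D$ with $\deg_D\xi=1$. Let $\pr:[0,1]\times X\to X$ be the projection and fix a smooth function $f:[0,1]\to[0,1]$ with $f(0)=0$ and $f(1)=1$. Put
\[
\gt:=\pr^*\gamma+d\bigl(f\cdot\pr^*\xi\bigr)\in\mD .
\]
One checks immediately that $\gt$ is closed, that $\deg_{\mD}\gt=2$, that $\gt$ vanishes on $[0,1]\times L$ because $\gamma$ and $\xi$ do, and that the pullbacks of $\gt$ under the two inclusions $x\mapsto(0,x)$ and $x\mapsto(1,x)$ of $X$ into $[0,1]\times X$ equal $\gamma$ and $\gamma'$ respectively. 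The plan is to produce the desired pseudo-isotopy from the \emph{constant} family of moduli spaces, interpolating only the closed input form via $\gt$.

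For each $\beta\in\sly$, equip $[0,1]\times\M_{k+1,l}(\beta)$ with the evaluation maps $\evbt_i:=\Id_{[0,1]}\times evb_i$ and $\evit_j:=\Id_{[0,1]}\times evi_j$. Since $evb_0$ is a proper submersion, so is $\evbt_0$, and integration along its fiber---which is canonically the fiber of $evb_0$---defines a pushforward $(\evbt_0)_*$. Define
\[
\mt^{\gt,\beta}_k,\ \mgt_k:\mC^{\otimes k}\longrightarrow\mC
\]
by the same formulas defining $\m^{\gamma,\beta}_k$ and $\mg_k$ in the statement, with $evb_i,evi_j,\gamma$ replaced throughout by $\evbt_i,\evit_j,\gt$, with $d$ now the de Rham differential of $[0,1]\times L$, and with signs read off from $\deg_{\mC}$. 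Take as pairing
\[
\lpt\xit,\etat\rpt:=(-1)^{|\etat|}\,\pi_*(\xit\wedge\etat),\qquad \pi:[0,1]\times L\to[0,1],
\]
where $\pi_*$ is integration over the closed fiber $L$, and set $\tilde\e:=1\in A^0([0,1]\times L)$.

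The claim is that $(\mgt,\lpt\;,\rpt,1)$ is a cyclic unital $A_\infty$ structure on the $\mR$-module $\mC$, and this is obtained by rerunning the proof of Theorem~\ref{thm:str}. The only property of $\gamma$ used there is $d\gamma=0$, which holds for $\gt$. The Stokes-theorem computation behind the $A_\infty$ relations (property~\ref{it:a_infty}) rests on the codimension-one boundary of $\M_{k+1,l}(\beta)$ being a union of fiber products; the relevant boundary of $[0,1]\times\M_{k+1,l}(\beta)$ for integration along $\evbt_0$ is precisely $[0,1]$ times that union, because the faces $\{0,1\}\times\M_{k+1,l}(\beta)$ map to the boundary of the base $[0,1]\times L$ and hence do not enter the fiberwise Stokes formula. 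Properties~\ref{it:val}--\ref{it:val2} follow from $\omega(\beta)\ge0$, $\omega(\beta)\to\infty$, the vanishing of the $\beta_0$-contribution (a constant disk has image in $L$, where $\gt$ vanishes), and the definition of $\nu$; properties~\ref{it:symm}--\ref{it:cyclic} follow from Stokes along the closed fiber $L$ and the symmetry of the fiber products; and properties~\ref{it:unit1}--\ref{it:unit3} follow from the forgetful maps dropping a boundary point labeled $1$, again using that $\gt$ is a relative form.

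It remains to check the boundary conditions defining a pseudo-isotopy, which follow by base change. Writing $\iota_t:\M_{k+1,l}(\beta)\hookrightarrow[0,1]\times\M_{k+1,l}(\beta)$ for $u\mapsto(t,u)$, the square
\[
\begin{CD}
\M_{k+1,l}(\beta) @>{\iota_t}>> [0,1]\times\M_{k+1,l}(\beta)\\
@V{evb_0}VV @VV{\evbt_0}V\\
L @>{j_t}>> [0,1]\times L
\end{CD}
\]
is Cartesian with $\evbt_0$ a proper submersion, so $j_t^*\circ(\evbt_0)_*=(evb_0)_*\circ\iota_t^*$. Together with $\evbt_j\circ\iota_t=j_t\circ evb_j$, the fact that $\evit_j\circ\iota_t$ is $evi_j$ followed by $x\mapsto(t,x)$, the fact that the pullback of $\gt$ under $x\mapsto(t,x)$ is $\gamma$ for $t=0$ and $\gamma'$ for $t=1$, and the commutation of $j_t^*$ with $d$, this yields $j_0^*\mgt_k(\at_1,\ldots,\at_k)=\mg_k(j_0^*\at_1,\ldots,j_0^*\at_k)$ and $j_1^*\mgt_k(\at_1,\ldots,\at_k)=\mgp_k(j_1^*\at_1,\ldots,j_1^*\at_k)$; the analogous base change for $\pi$ shows that $\lpt\;,\rpt$ specializes to $\langle\;,\rangle$ at $t=0$ and at $t=1$, and $j_t^*\tilde\e=1$ is immediate. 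The one point needing genuine attention is the claim that the proof of Theorem~\ref{thm:str} carries over unchanged: one must confirm that the sign conventions and the Stokes bookkeeping of that proof survive the insertion of the extra $[0,1]$-factor into every moduli space---in particular that the $dt$-component of $\mgt_1$, absent from $\mg_1$, does not obstruct property~\ref{it:a_infty}---and that property~\ref{it:val} genuinely uses that $\gt$ is a relative form. No geometric input beyond Theorem~\ref{thm:str} is required.
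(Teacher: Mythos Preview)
Your approach is essentially the same as the paper's: construct an interpolating closed relative form $\gt$ on $I\times X$ restricting to $\gamma,\gamma'$ at the endpoints, use the constant family $I\times\M_{k+1,l}(\beta)$, define $\mgt_k$ and $\lpt\;,\rpt$ by the same push--pull formulas, and rerun the verification of the cyclic unital $A_\infty$ axioms together with the base-change check for the boundary conditions. The paper organizes the rerun by stating and (mostly by analogy) proving tilde versions of each property of the $\q$-operators; your sketch collapses this into ``the proof of Theorem~\ref{thm:str} carries over,'' which is accurate.

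One small point you underplay: property~\eqref{it:unit1} in the isotopy setting does not follow from the top-degree argument alone. On $I\times L$ the top degree is $n+1$, so the analog of the top-degree vanishing only kills the $dt$-part of $\lpt\mgt_k(\ldots),1\rpt$; the $A^0(I)$-part requires the additional observation that evaluating at each $t\in I$ reduces to the already-known vanishing on $L$ (this is Lemma~4.10 in the paper). This is exactly the kind of ``extra $[0,1]$-factor bookkeeping'' you flag at the end, so your caveat covers it, but it is the one place where the argument is not literally verbatim from Theorem~\ref{thm:str}.
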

In Section~\ref{pseudoisot} we also
discuss pseudoisotopies arising from varying $J,$ under regularity assumptions on the family moduli spaces similar to those already assumed for $\M_{k+1,l}(\beta).$

By property~\eqref{it:val}, the maps $\m_k$ descend to maps on the quotient
\[
\mbar_k:\overline{C}^{\otimes k}\lrarr \overline{C}.
\]
\begin{thm}\label{thm:prop}
Suppose $\d_{t_0}\gamma=1\in A^0(X,L)\otimes Q$ and $\d_{t_1}\gamma=\gamma_1\in A^2(X,L)\otimes Q$.
Assume the map $H_2(X,L;\Z)\to Q$ given by $\beta\mapsto\int_\beta\gamma_1$ descends to $\sly$.
Then the operations $\mg_k$ satisfy the following properties.
\begin{enumerate}
	\item\label{prop1} (Fundamental class)
		$\d_{t_0}\mg_k=-1\cdot\delta_{0,k}$.
	\item\label{prop2} (Divisor)
		$\d_{t_1}\m^{\gamma,\beta}_k=\int_\beta\gamma_1 \cdot\m^{\gamma,\beta}_k.$
	\item\label{prop3} (Energy zero)
		The operations $\mg_k$ are deformations of the usual differential graded algebra structure on differential forms. That is,
		\[
		\bar{\m}^\gamma_1(\alpha)= d\alpha,\qquad\bar{\m}^\gamma_2(\alpha_1,\alpha_2)=(-1)^{|\alpha_1|}\alpha_1\wedge\alpha_2,\qquad \bar{\m}^\gamma_k=0, \quad k \neq 1,2.
		\]
\end{enumerate}
\end{thm}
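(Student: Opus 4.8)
The plan is to read each of the three properties off the defining sum for $\m^{\gamma,\beta}_k$, organizing the argument around two features: the structure of constant stable maps (the class $\beta_0$), and the behaviour of the operations under the forgetful map
\[
\pi\colon\M_{k+1,l}(\beta)\lrarr\M_{k+1,l-1}(\beta)
\]
dropping one interior marked point, which is defined whenever the target is a genuine moduli space. I would use three inputs. First, a constant stable map has image a single point of $L$, so on $\M_{k+1,l}(\beta_0)$ every interior evaluation map factors through $L\hookrightarrow X$; since $\gamma,\gamma_1\in A^*(X,L)$ vanish on $L$, all the pullbacks $(evi_j)^*\gamma$ and $(evi_j)^*\gamma_1$ vanish there. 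Second, any $\beta\ne\beta_0$ carrying a nonconstant $J$-holomorphic disk has $\omega(\beta)>0$, hence $T^\beta\in\mI_R$. Third, $\pi$ is a proper submersion of oriented orbifolds with corners, compatible with the evaluation maps ($evb_i=evb^{(l-1)}_i\circ\pi$, and $evi_j=evi^{(l-1)}_j\circ\pi$ for the retained points) and with pushforward (${evb_0}_*={evb_0^{(l-1)}}_*\circ\pi_*$), whose fibre over a stable map is a stabilization of its domain curve, on which the dropped interior evaluation map restricts to the stable map itself. Consequently $\pi_*1=0$ since the fibre is two-dimensional, and, since $\gamma_1$ is closed (because $d\gamma=0$), $\pi_*\big((evi_l)^*\gamma_1\big)$ is the constant $\int_\beta\gamma_1$, any constant stabilizing bubble contributing nothing.

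For the energy-zero statement I would reduce modulo $\mI_R$: by the second input only $\beta=\beta_0$ survives, so $\bar{\m}^\gamma_k=\overline{\m^{\gamma,\beta_0}_k}$, and by the first input only the $l=0$ summand remains. Now $\M_{k+1,0}(\beta_0)$ is empty for $k\le 1$, and for $k\ge 2$ it fibres over $L$ with fibre the moduli of stable domains, of positive dimension once $k\ge 3$; since all $evb_j$ equal this projection to $L$, for $k\ge 3$ one fibre-integrates a form pulled back from $L$ over a positive-dimensional fibre and gets $0$, while for $k=2$ the fibre is a point and the pushforward is $\alpha_1\wedge\alpha_2$. Tracking the sign $(-1)^{\sum_j j(|\alpha_j|+1)+nk+1}$ at $k=2$ gives $(-1)^{|\alpha_1|}$, and at $k=1$ only the term $d\alpha_1$ remains; this is the claim.

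For the fundamental class, since $\d_{t_0}T^\beta=0$ I would differentiate $\m^{\gamma,\beta}_k$ and apply the Leibniz rule to $\bigwedge_{j=1}^l(evi_j)^*\gamma$; because $\d_{t_0}\gamma=1$ and the forms involved are even this yields a sum over $i$ of the same integrand with $(evi_i)^*\gamma$ replaced by $1$. For $\beta\ne\beta_0$, factoring each term through $\pi$ dropping the $i$-th point turns the integrand into $\pi^*$ of a form on $\M_{k+1,l-1}(\beta)$ wedged with $1$, so the projection formula together with ${evb_0}_*={evb_0^{(l-1)}}_*\circ\pi_*$ and $\pi_*1=0$ makes it vanish; thus $\d_{t_0}\m^{\gamma,\beta}_k=0$. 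For $\beta=\beta_0$ the first input annihilates all interior insertions, so only the $l=1$ Leibniz term survives, living on $\M_{k+1,1}(\beta_0)$; for $k\ge 1$ it again fibre-integrates a form pulled back from $L$ over a positive-dimensional fibre and vanishes, while for $k=0$, where $\M_{1,1}(\beta_0)\cong L$ and $evb_0$ is the identity, it equals the $k=0$ sign $-1$ times ${evb_0}_*1=1$; hence $\d_{t_0}\mg_k=-\delta_{0,k}\cdot 1$. For the divisor equation the case $\beta=\beta_0$ is trivial since $\int_{\beta_0}\gamma_1=0$ and, by the first input, the geometric part of $\m^{\gamma,\beta_0}_k$ is independent of $t_1$; for $\beta\ne\beta_0$ the same differentiation, now with $\d_{t_1}\gamma=\gamma_1$, gives a sum over $i$ with one $(evi_i)^*\gamma$ replaced by $(evi_i)^*\gamma_1$, and since all these summands agree after reordering (even degrees) the sum is $\tfrac{l}{l!}$ times the term carrying $\gamma_1$ on the $l$-th point. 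Factoring that term through $\pi$ dropping the $l$-th point and using the projection formula with $\pi_*\big((evi_l)^*\gamma_1\big)=\int_\beta\gamma_1$ extracts the constant $\int_\beta\gamma_1$ and leaves $\tfrac1{(l-1)!}$ times the integrand with $l-1$ interior copies of $\gamma$ on $\M_{k+1,l-1}(\beta)$; as the overall sign in the definition of $\m^{\gamma,\beta}_k$ does not depend on $l$, reindexing reassembles the sum over $l\ge 1$ into $\int_\beta\gamma_1\cdot\m^{\gamma,\beta}_k$.

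The main obstacle is the third input: showing that the interior forgetful map $\M_{k+1,l}(\beta)\to\M_{k+1,l-1}(\beta)$ is a proper submersion of oriented orbifolds with corners, compatible with the evaluation maps and with iterated pushforward, and that fibrewise integration of $(evi_l)^*\gamma_1$ computes the relative pairing $\int_\beta\gamma_1$. This requires a careful description of the universal curve over the moduli space and of the relative orientations, especially over the loci of nodal domains and where the dropped point collides with a node or with another marked point. By contrast, the sign bookkeeping in the Leibniz and reordering steps and in the $k=2$ constant-map computation is routine.
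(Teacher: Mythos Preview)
Your strategy matches the paper's: reduce each property to a statement about the $\q_{k,l}^\beta$ via the Leibniz rule, then use the interior forgetful map $\pi$ together with a separate analysis for $\beta_0$. The gap is in your ``third input'': you assume $\pi$ is a proper submersion of orbifolds with corners, and you flag proving this as the main obstacle. In fact $\pi$ is \emph{not} a submersion --- over a point of $\M_{k+1,l-1}(\beta)$ in a boundary stratum the fibre is a nodal curve --- so this is the wrong direction to push.

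The paper resolves this by working with currents rather than forms. Push-forward along a non-submersion is still defined on currents, and both of your uses of $\pi_*$ survive in that setting. For the fundamental class, $\pi_*1$ is a current of strictly negative cohomological degree, hence vanishes; no fibre-dimension count is needed. For the divisor axiom, one does not compute $\pi_*((evi_1)^*\gamma_1)$ fibrewise but instead shows it is a degree-$0$ current satisfying $(\pi_*\zeta)(d\eta)=f(\eta|_{\partial\M_{k+1,l-1}(\beta)})$ for some boundary current $f$. Here one decomposes $\partial\M_{k+1,l}(\beta)$ into a horizontal part (lying over $\partial\M_{k+1,l-1}(\beta)$) and a vertical part (where the dropped point sits on a ghost bubble mapping into $L$); the vertical part contributes nothing precisely because $\gamma_1|_L=0$. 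A short lemma (any degree-$0$ current with this boundary property is a constant multiple of $\int_{\M}$) then applies, and evaluating at a regular value of $\pi$ --- where your fibre picture \emph{is} valid --- identifies the constant as $\int_\beta\gamma_1$. Your phrase ``any constant stabilizing bubble contributing nothing'' is exactly this vertical-boundary vanishing, but it enters via Stokes' theorem for currents, not via a submersion hypothesis.
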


In Section~\ref{ssec:form} we also construct a distinguished element $\mg_{-1} \in R$ following~\cite{Fukaya2}. In the subsequent sections, we prove its properties along with the properties of $\mg_k$ for $k \geq 0.$ In Section~\ref{pseudoisot} we construct $\mgt_{-1}$, the analogous structure for a pseudoisotopy. In Section~\ref{ssec:uniform} we reformulate the $A_\infty$ structure equations of the pseudoisotopy so that the structure equation for $\mgt_{-1}$ fits more naturally. The reformulated $A_\infty$ structure equations are used in~\cite{ST2} to prove that the superpotential is invariant under pseudoisotopy.

\begin{ex}\label{rem:assumptions}
Suppose $J$ is integrable, and we are given a Lie group $G_X$ with a transitive action $\alpha : G_X \times X \to X$ such that for each $g \in G$ the diffeomorphism $\alpha(g,\cdot)$ is $J$-holomorphic. Moreover, suppose $c_X: X \to X$ is an anti-holomorphic involution with $L = Fix(c_X)$ and $c_G : G \to G$ is an involutive homomorphism such that $\alpha(c_G g,c_X x) = c_X \alpha(g,x).$ Then it is shown in~\cite{Zernik2} that our assumptions that $\M_{k,l}(\beta)$ is a smooth orbifold with corners and $evb_0^\beta$ is a submersion are satisfied.
Indeed, it is well-known that the moduli space of closed genus zero stable maps to $X$ is a complex orbifold in the presence of a transitive group action~\cite{FultonPandharipande,RobbinRuanSalamon}.
The moduli space of open stable maps $\M_{k,l}(\beta)$ is constructed from the fixed points of the induced anti-holomorphic involution of the moduli space of closed stable maps by cutting along the compactification divisor. The subgroup $G_L = Fix(c_G) \subset G_X$ acts on $\M_{k,l}(\beta)$ and acts transitively on $L$, and $evb_0$ is $G_L$ equivariant. It follows that $evb_0$ is a submersion.

Thus, examples of $(X,L)$ which satisfy our assumptions include $(\P^n,\RP^n)$ with the standard complex
and symplectic structures or, more generally, flag varieties, Grassmannians, and products thereof.
\end{ex}

\begin{rem}\label{rem:assumptionsgeneral}
More generally, suppose $J$ is integrable, there exists a Lie group $G_X$ that acts transitively on $X$ by $J$-holomorphic diffeomorphisms, and there exists a Lie subgroup $G_L \subset G_X$ that preserves $L$ and acts transitively on $L.$ We outline an argument showing that $\M_{k,l}(\beta)$ is a smooth orbifold with corners and $evb_0^\beta$ is a submersion.
Indeed, \cite[Proposition 7.4.3]{MS} shows that all $J$-holomorphic genus zero stable maps to $X$ without boundary are regular. A modification of the argument there shows that $J$-holomorphic genus zero stable maps to $(X,L)$ with one boundary component are also regular. See~\cite[Lemma 3.2]{EvansLekili} for the special case where the domain of the map is a disk under weaker assumptions. For regularity of holomorphic disks, instead of Grothendieck's classification~\cite{Grothendieck}, one uses Oh's work on the Riemann-Hilbert problem~\cite{Oh}. The argument applies equally well to maps that are not somewhere injective in the sense
of~\cite[Section 2.5]{MS}. So, the fact that a $J$-holomorphic map from a domain with boundary need not factor through a somewhere injective map~\cite{KwonOh,Lazzarini} does not affect the argument. Once all stable maps are regular, it should be possible to modify the techniques of~\cite{RobbinRuanSalamon} to include Lagrangian boundary conditions, and hence conclude that the moduli space is a smooth orbifold with corners. Since $G_L$ acts transitively on~$L,$ it follows that $evb_0$ is a submersion.

Virtual fundamental class techniques should allow the extension of our results to general target manifolds.
\end{rem}

\subsection{Outline}
In Section~\ref{ssec:orinetation} we review orientation conventions and properties of the push-forward of differential forms. Sections~\ref{ssec:form}-\ref{ssec:q-1} formulate and prove the $A_\infty$ structure relations for the closed-open maps $\q_{k,l}$ for $k \geq -1$. In Section~\ref{ssec:properties} we formulate and prove additional properties of the $\q$ operators. The section closes with the proofs of Theorems~\ref{thm:str} and~\ref{thm:prop}. Section~\ref{pseudoisot} constructs pseudo-isotopies and uses them to prove Theorem~\ref{thm:isot}.
Section~\ref{ssec:uniform} reformulates the $A_\infty$ structure relations in a way that incorporates $\m_{-1}$ more naturally.

\subsection{Acknowledgments}
The authors would like to thank D. Auroux and an anonymous referee for many helpful comments, D. McDuff, K. Wehrheim, and A. Zernik, for helpful conversations, and X. Chen for a sign correction.
The authors were partially supported by ERC starting grant 337560 and ISF Grant 1747/13. The first author was partially supported by ISF Grant 569/18. The second author was partially supported by the Canada Research Chairs Program and by NSF grant No. DMS-163852.

\subsection{Notation}\label{ssec:notation}\leavevmode

We write $I:=[0,1]$ for the closed unit interval.

We denote by $pt$ the map from any space to the point or, depending on the context, the point itself.

Use $i$ to denote the inclusion $i:L\hookrightarrow X$. By abuse of notation, we also use $i$ for $\Id\times i:I\times L\to I\times X$. The meaning in each case should be clear from the context.

Whenever a tensor product is written, we mean the completed tensor product. For example, $A^*(L)\otimes R$ is the completion of the tensor product $A^*(L)\otimes_{\R}R$ with respect to $\nu$. The tensor product of differential graded algebras is again a differential graded algebra in the standard way.
In particular,
\begin{gather*}
d(t_j\cdot \a)=(-1)^{|t_j|}t_j\cdot d\a,\quad
\a\cdot t_j=(-1)^{|t_j|\cdot|\a|}t_j\cdot\a,
\qquad
\forall \a\in \Upsilon,
\quad
\Upsilon=A^*(L), A^*(X,L).
\end{gather*}

Write $A^*(L;R)$ for $A^*(L)\otimes R$. Similarly, $A^*(X;Q)$ and $A^*(X,L;Q)$ stand for $A^*(X)\otimes Q$ and $A^*(X,L)\otimes Q$ respectively.

For $f:M\to N$ a smooth map between orbifolds with corners, define the relative dimension by
\[
\rdim f: = \dim M - \dim N.
\]
In particular, if $f$ is a submersion, then $\rdim f$ is the dimension of the fiber of $f$.

For two lists $B_1=(v_1,\ldots,v_n),$ $B_2=(w_1,\ldots,w_m),$ denote by $B_1\circ B_2$ the concatenation $(v_1,\ldots,v_n,w_1,\ldots,w_m)$.

\section{Structure}\label{ssec:constr}

\subsection{Orientations and integration}\label{ssec:orinetation}

\subsubsection{Orbifolds with corners}\label{ssec:moc}

We use the definition of orbifolds with corners from~\cite{ST4}. We also use the definitions of smooth maps, strongly smooth maps, boundary and fiber products of orbifolds with corners given there. In particular, for $M$ an orbifold with corners, $\d M$ is again an orbifold with corners and comes with a natural map $i_M : \d M \to M.$ In the special case of manifolds with corners, our definition of boundary coincides with~\cite[Definition~2.6]{Joyce2}, our smooth maps coincide with weakly smooth maps in~\cite[Definition~2.1(a)]{Joyce3}, and our strongly smooth maps are as in~\cite[Definition~2.1(e)]{Joyce3}, which coincides with smooth maps in~\cite[Definition 3.1]{Joyce2}. We say a map of orbifolds is a submersion if it is a strongly smooth submersion in the sense of~\cite{ST4}. In the special case of manifolds with corners, our submersions coincide with submersions in~\cite[Definition~3.2(iv)]{Joyce2} and with strongly smooth horizontal submersions in~\cite[Definition 19(a)]{Zernik2}.
For a strongly smooth map $f : M \to N,$ we use the notion of the vertical boundary $\partial^v_fM \subset \partial M$ defined in~\cite[Section 2.1.1]{ST4}, which extends to orbifolds with corners the definition of~\cite[Section 4]{Joyce2} for manifolds with corners. We write $i_f^v : \partial^v_fM \to M$ for the restriction of $i_f$ to $\partial^v_f M.$ When $f$ is a submersion, the vertical boundary is the fiberwise boundary, that is, $\partial^v_fM = \coprod_{y \in N} \partial (f^{-1}(y)).$ If $\partial N = \emptyset,$ then $\partial^v_f M = \partial M.$
A strongly smooth map of orbifolds $f : M \to N$ induces a strongly smooth map $f|_{\d^v_f M} = f \circ i^v_f : \d^v_f M \to N,$ called the restriction to the vertical boundary.  If $f$ is a submersion, then the restriction $f|_{\d^v_f M}$ is also a submersion.
As usual, diffeomorphisms are smooth maps with a smooth inverse.
We use the notion of transversality from~\cite[Section 3]{ST4}, which is induced from transversality of maps of manifolds with corners as defined in~\cite[Definition~6.1]{Joyce2}. In particular, any smooth map is tranverse to a submersion. Weak fiber products of strongly smooth transverse maps exist by~\cite[Lemma 5.3]{ST4}. Below, we omit the adjective `weak' for brevity.

\subsubsection{Orientation conventions}
For a diffeomorphism $f : M \to N$ of oriented orbifolds with corners, we define $sgn(f)$ to be $1$ if $f$ preserves orientation and $-1$ if it reverses orientation. We use similar notation for isomorphisms of oriented vector spaces.
We use the definition for orientations of orbifolds with corners given in~\cite[Section 3]{ST4}. We use the conventions of Sections 2.2, 3, and~5.1 of~\cite{ST4} for orienting boundary and fiber products of orbifolds with corners.  For a submersion of orbifolds with corners $h: Q \to S$ and $y \in S,$ we orient the fiber $h^{-1}(y)$ by identifying it with the fiber product,
\begin{equation}\label{eq:fiberid}
h^{-1}(y) = \{y\} \times_S Q.
\end{equation}
For manifolds,
our convention for orienting boundaries agrees with~\cite[Convention~7.2(a)]{Joyce2} and our convention for orienting fiber products agrees with~\cite[Convention~7.2(b)]{Joyce2}. For submersions of manifolds, our convention for orienting fiber products agrees also with~\cite{FOOO}.

\subsubsection{Integration properties}
For a detailed discussion of differential forms on orbifolds with corners we refer to~\cite{ST4}.
Let $f: M\to N$ be a proper submersion of oriented orbifolds with corners of relative dimension $\rdim f =r,$ and let $\Upsilon$ be a graded-commutative algebra over~$\R$. Denote by $f_* : A^*(M;\Upsilon) \to A^*(N;\Upsilon)[-r]$ the push-forward of forms along $f$ as defined in~\cite[Section 4.1]{ST4}, that is, integration over the fiber. We will need the following properties of $f_*$ from~\cite[Theorem 1]{ST4}, where they were formulated for $\Upsilon=\R$. Property~\eqref{prop:pushpull} below allows the reduction of integrals with coefficients in general $\Upsilon$ to integrals with coefficients in $\R$. In the following, all orbifolds are oriented.

\begin{prop}\label{prop:proppp}\leavevmode
\begin{enumerate}
	\item \label{normalization}
	Let $M$ be compact and let $f:M\to pt.$ Let $\alpha\in A^m(M)\otimes\Upsilon$. Then
	\[
	f_*\alpha=\begin{cases}
	\int_M\alpha,& m=\dim M,\\
	0,&\text{otherwise}.
	\end{cases}
	\]
	\item\label{prop:pushcomp}
		Let $g: P\to M$, $f:M\to N,$ be proper submersions. Then
		\[
		f_*\circ g_*=(f\circ g)_*.
		\]
	\item\label{prop:pushpull}
		Let $f:M\to N$ be a proper submersion, $\alpha\in A^*(N;\Upsilon),$ $\beta\in A^*(M;\Upsilon)$. Then
		\[
		f_*(f^*\alpha\wedge\beta)=\alpha\wedge f_*\beta.
		\]
	\item\label{prop:pushfiberprod}
		Let
		\[
		\xymatrix{
		{M\times_N P}\ar[r]^{\quad p}\ar[d]^{q}&
        {P}\ar[d]^{g}\\
        {M}\ar[r]^{f}&N
		}
		\]
		be a pull-back diagram of smooth maps, where $g$ is a proper submersion. Let $\alpha\in A^*(P).$ Then
		\[
		q_*p^*\alpha=f^*g_*\alpha.
		\]
\end{enumerate}
\end{prop}

Furthermore, we have the following generalization of Stokes' theorem, also from~\cite[Theorem~1]{ST4}. It uses the notion of vertical boundary from Section~\ref{ssec:moc}.
\begin{prop}[Stokes' theorem]\label{stokes}
Let $f:M\to N$ be a proper submersion with $\dim M=s$, and let $\xi\in A^t(M;\Upsilon)$. Then
\[
d (f_*\xi)=f_*(d \xi)+(-1)^{s+t}\big(f\big|_{\partial^v_f M}\big)_*\xi.
\]
\end{prop}

\begin{rem}
Proposition~\ref{stokes} applied to $f:M\to pt$ yields the classical Stokes' theorem up to a sign,
\begin{equation}\label{eq:classstokes}
\int_M d\xi = (-1)^{\dim M+|\xi|+1}\int_{\d M} \xi.
\end{equation}
The sign arises from the possibly non-trivial grading of the coefficient ring.
To derive this sign, assume without loss of generality that $\xi = t \eta,$ where $t \in \Upsilon$ and $\eta \in A^*(M)$. Then, the classical Stokes' theorem gives
\[
\int_M d\xi = \int_M d(t\eta) = (-1)^{|t|} t\int_M d\eta = (-1)^{|t|}t \int_{\partial M} \eta = (-1)^{|t|} \int_{\partial M} \xi.
\]
On the other hand, the integrals vanish unless $|\eta| = \dim M - 1.$ In this case,
\[
|t| \equiv |\xi| + |\eta| \equiv \dim M +|\xi| + 1 \pmod 2.
\]
\end{rem}

\subsubsection{Currents}\label{sssec:currents}
For a detailed discussion of currents on orbifolds with corners we refer to~\cite{ST4}. We recall below some key notation.
Let $M$ be an oriented orbifold with corners. Denote by $\A^k(M)$ the space of currents of cohomological degree $k$, that is, the dual space of compactly supported differential forms $\Ac^{\dim M - k}(M)$. Differential forms are identified as a subspace of currents by
\begin{gather*}
\varphi:A^k(M)\hookrightarrow \A^k(M),\\
\varphi(\eta)(\alpha)=\int_{M}\eta\wedge\alpha,\quad \alpha\in \Ac^{\dim M -k}(M).
\end{gather*}
Accordingly, for a general current $\zeta,$ we may use the notation
\begin{equation}\label{eq:diw}
\zeta(\alpha)=\int_{M} \zeta\wedge\alpha,\quad  \a\in \Ac^*(M).
\end{equation}
Define
\[
d : \A^k(M) \to \A^{k+1}(M)
\]
by $d\zeta(\alpha)=(-1)^{1+|\zeta|}\zeta(d\alpha).$ Thus, if $M$ is closed, we have $d\varphi(\eta) = \varphi(d\eta).$
Currents are a bimodule over differential forms with
\[
(\eta\wedge \zeta)(\gamma)
:= (-1)^{|\eta|\cdot|\zeta|} \zeta(\eta\wedge\gamma),
\qquad
\gamma\in \Ac^*(M),\quad \eta\in A^*(M),\quad\zeta\in\A^*(M),
\]
and
\[
(\zeta\wedge \eta)(\gamma)
:= \zeta(\eta\wedge\gamma),
\qquad
\gamma\in \Ac^*(M), \quad \eta\in A^*(M),\quad\zeta\in\A^*(M).
\]
This bimodule structure makes $\varphi$ a bimodule homomorphism.

Let $f:M\to N$ be a proper map of orbifolds. Define the push-forward
\begin{equation}\label{eq:pfc}
f_* : \A^k(M) \to \A^{k-\rdim f}(N)
\end{equation}
by
\[
(f_*\zeta)(\xi)=(-1)^{m\cdot \rdim f}\zeta(f^*\xi),\qquad \xi\in A^m(N).
\]
So, when $f$ is a submersion, $f_* \varphi(\eta) = \varphi(f_*\eta).$ Analogs of the integration properties of Propositions~\ref{prop:proppp} and~\ref{stokes} for currents are given in Propositions ~6.1 and~6.5 of~\cite{ST4} respectively.

\subsection{Formulation}\label{ssec:form}
In this section, we construct a family of $A_\infty$ structures following~\cite{FOOO, Fukaya, FOOO1}.

\subsubsection{Open stable maps}\label{ssec:osm}
A $J$-holomorphic genus-$0$ open stable map to $(X,L)$ of degree $\beta \in \sly$ with one boundary component, $k+1$ boundary marked points, and $l$ interior marked points, is a quadruple $(\Sigma, u,\vec{z},\vec{w})$ as follows. The domain $\Sigma$ is a genus-$0$ nodal Riemann surface with boundary consisting of one connected component,
\[
u: (\Sigma,\d\Sigma) \to (X,L)
\]
is a continuous map, $J$-holomorphic on each irreducible component of $\Sigma,$ with
\[
u_*([\Sigma,\partial\Sigma]) = \beta,
\]
and
\[
\vec{z} = (z_0,\ldots,z_k), \qquad \vec{w} = (w_1,\ldots,w_l),
\]
with $z_j \in \partial \Sigma, \, w_j \in int(\Sigma),$ distinct from one another and from the nodal points. The labeling of the marked points $z_j$ respects the cyclic order given by the orientation of $\partial \Sigma$ induced by the complex orientation of $\Sigma.$ Stability means that
if $\Sigma_i$ is an irreducible component of $\Sigma$, then either $u|_{\Sigma_i}$ is nonconstant or it satisfies the following requirement: If $\Sigma_i$ is a sphere, the number of marked points and nodal points on $\Sigma_i$ is at least 3; if $\Sigma_i$ is a disk, the number of marked and nodal boundary points plus twice the number of marked and nodal interior points is at least $3$.
An isomorphism of open stable maps $(\Sigma,u,\vec{z},\vec{w})$ and $(\Sigma',u',\vec{z}',\vec{w}')$ is a homeomorphism $\theta : \Sigma \to \Sigma'$, biholomorphic on each irreducible component, such that
\[
u = u' \circ \theta, \qquad\qquad  z_j' = \theta(z_j), \quad j = 0,\ldots,k, \qquad w_j' = \theta(w_j), \quad j = 1,\ldots,l.
\]
Thus we obtain the category of stable maps, which has stable maps for objects and isomorphisms of stable maps for morphisms. Since all morphisms are isomorphisms, the category of stable maps is a groupoid.

Denote by $\M_{k+1,l}(\beta) = \M_{k+1,l}(\beta;J)$ the moduli space of $J$-holomorphic genus zero open stable maps to $(X,L)$ of degree $\beta$ with one boundary component, $k+1$ boundary marked points, and $l$ internal marked points. In particular, $\M_{k+1,l}(\beta)$ is a topological groupoid that is equivalent to the category of stable maps. Here, by topological groupoid, we mean a small groupoid along with a topology on the sets of objects and morphisms such that the structure maps of the groupoid are continuous.
Denote by
\begin{gather*}
evb_j^\beta:\M_{k+1,l}(\beta)\to L, \qquad  \qquad j=0,\ldots,k, \\
evi_j^\beta:\M_{k+1,l}(\beta) \to X, \qquad \qquad j=1,\ldots,l,
\end{gather*}
the evaluation maps given by $evb_j^\beta((\Sigma,u,\vec{z},\vec{w}))=u(z_j)$ and $evi_j^\beta((\Sigma,u,\vec{z},\vec{w}))= u(w_j).$
We may omit the superscript $\beta$ when the omission does not create ambiguity.

As mentioned above in Section~\ref{ssec:setting}, Example~\ref{rem:assumptions}, and Remark~\ref{rem:assumptionsgeneral}, throughout the paper we assume that $\M_{k+1,l}(\beta)$ is a smooth orbifold with corners and $evb_0^\beta$ is a proper submersion. In particular, the spaces of objects and morphisms of $\M_{k+1,l}(\beta)$ are smooth manifolds with corners and the groupoid structure maps are local diffeomorphisms. Corners of codimension $m$ in $\M_{k+1,l}(\beta)$ consist of open stable maps $(\Sigma, u,\vec{z},\vec{w})$ where $\Sigma$ has $m$ boundary nodes. A precise description of corners of codimension $m = 1$ is given in Proposition~\ref{rem:gluingsign} below. In the special case when $k=-1$ and $\beta$ belongs to the image of the map $H_2(X) \to H_2(X,L) \to \sly,$ an additional boundary component arises from the collapse of the boundary of a disk to a point. Alternatively, one can view this phenomenon as the bubbling of a $J$-holomorphic sphere from a constant disk, which is unstable and thus forgotten. The instability of the constant disk causes such bubbling to occur in codimension $1.$ A precise description of this type of boundary component is given in Proposition~\ref{rem:pintgluing} below.

\subsubsection{Operators}
For any list $a=(a_1,\ldots,a_k)\in \Z_{\ge 0}^{\times k},$ define
\[
\varepsilon(a):=\sum_{j=1}^kj(a_j+1)+1.
\]
To simplify notation in the following, we allow differential forms as input, in lieu of their degrees.
In particular, for a list $\a\in C^{\times k}$,
\[
\varepsilon(\alpha)=\sum_{j=1}^kj(|\alpha_j|+1)+1.
\]

For all $\beta\in\sly$, $k,l\ge 0$,  $(k,l,\beta) \not\in\{ (1,0,\beta_0),(0,0,\beta_0)\}$, define
\[
\qkl^\beta:C^{\otimes k}\otimes A^*(X;Q)^{\otimes l} \lrarr C
\]
by
\begin{align*}
\q^{\beta}_{k,l}(\alpha_1\otimes\cdots\otimes\alpha_k;\gamma_1\otimes\cdots\otimes\gamma_l):=
(-1)^{\varepsilon(\alpha)}
(evb_0^\beta)_* \left(\bigwedge_{j=1}^l(evi_j^\beta)^*\gamma_j\wedge\bigwedge_{j=1}^k (evb_j^\beta)^*\alpha_j\right).
\end{align*}
The case $\q_{0,0}^{\beta}$ is understood as $-(evb_0^{\beta})_*1.$
Furthermore, for $l\ge 0$, $(l,\beta)\neq (1,\beta_0),(0,\beta_0)$, define
\[
\q_{-1,l}^\beta:A^*(X;Q)^{\otimes l}\lrarr R
\]
by
\begin{gather}\label{eq:q-1def}
\q_{-1,l}^\beta(\gamma_1\otimes\cdots\otimes\gamma_l):= \int_{\M_{0,l}(\beta)} \bigwedge_{j=1}^l (evi_j^\beta)^*\gamma_j.
\end{gather}
Define
\[
\q_{1,0}^{\beta_0}(\alpha):=d\alpha,\quad \q_{0,0}^{\beta_0}:=0,\quad \q_{-1,1}^{\beta_0}:=0,\quad \q_{-1,0}^{\beta_0}:=0.
\]
Set
\begin{align*}
\qkl:=\sum_{\beta\in\sly}T^{\beta}\qkl^{\beta}.
\end{align*}

Lastly, define similar operations using spheres,
\[
\q_{\emptyset,l}:A^*(X;Q)^{\otimes l}\lrarr A^*(X;R),
\]
as follows. For $\beta\in H_2(X;\Z)$ let $\M_{l+1}(\beta)$ be the moduli space of stable $J$-holomorphic spheres with $l+1$ marked points indexed from 0 to $l$ representing the class $\beta$, and let $ev_j^\beta:\M_{l+1}(\beta)\to X$ be the evaluation maps. Assume that all the moduli spaces $\M_{l+1}(\beta)$ are smooth orbifolds and $ev_0$ is a submersion. Let
\begin{equation}\label{eq:varpi}
\pr: H_2(X;\Z) \to \sly
\end{equation}
denote the projection.
Recall that the relative spin structure $\s$ determines a class $w_{\s} \in H^2(X;\Z/2\Z)$ such that $w_2(TL) = i^* w_{\s}$. By abuse of notation we think of $w_{\s}$ as acting on $H_2(X;\Z)$.

For $l\ge 0$, $(l,\beta)\ne (1,0),(0,0)$, set
\begin{gather*}
\q_{\emptyset,l}^\beta(\gamma_1,\ldots,\gamma_l):=
(-1)^{w_{\s}(\beta)}
(ev_0^\beta)_*(\wedge_{j=1}^l(ev_j^\beta)^*\gamma_j),\\
\q_{\emptyset,l}(\gamma_1,\ldots,\gamma_l):=
\sum_{\beta\in H_2(X)}
T^{\pr(\beta)}\q_{\emptyset,l}^\beta(\gamma_1,\ldots,\gamma_l),
\end{gather*}
and define
\[
\q_{\emptyset,1}^0:= 0,\qquad \q_{\emptyset,0}^0:= 0.
\]
The sign $(-1)^{w_\s(\beta)}$ is designed to compensate for the gluing sign in Proposition~\ref{rem:pintgluing}, as in Lemma~\ref{lm:starofDtype}.

In Proposition~\ref{lm:qlinear} we prove that the $\q$ operators defined in this section are $R$-linear in the proper sense.

\subsubsection{Relations}

In dealing with the next result, we will be using the following notation conventions.

A list is a finite sequence. We write $A\leq B$ if $A$ is a sublist of $B$.  Denote by $[k]$ a fundamental list of integers, namely,
\[
[k]:=(1,\ldots,k).
\]
An ordered $3$-partition of $[k]$ is a partition of $[k]$ to three  sublists $(1:3),(2:3),$ and $(3:3),$ such that
\[
(1:3)\circ (2:3)\circ (3:3)=[k].
\]
For example, a possible ordered $3$-partition of $[7]$ is $\{(1,2),(3,4,5,6),(7)
\}$, so in the above notation $(1:3)=(1,2),$ $(2:3)=(3,4,5,6),$ and $(3:3)=(7)$.
On the other hand, $\{(1,2),(4,3,5,6),(7)
\}$ and $\{(1,3),(2,4,5,6),(7)
\}$ are not ordered 3-partition of $[7]$, because the order in each is violated.

Use $|(i:3)|$ to denote the length of the corresponding sub-list, $i=1,2,3$. So, if
\[
(1:3)=(1,\ldots,i_1),\quad(2:3)=(i_1+1,\ldots,i_1+i_2),\quad (3:3)=(i_1+i_2+1,\ldots,k),
\]
then $|(1:3)|:=i_1$, $|(2:3)|=i_2$, and $|(3:3)|=k-i_1-i_2$. We allow a sub-list to be empty, in which case its length is $0$.

Denote the set of all ordered $3$-partitions of $[k]$ by $S_3[k]$.
Similarly, denote by $S_2[k]$ the set of ordered 2-partitions of $[k]$.

For a list $\alpha=(\alpha_1,\ldots,\alpha_k)$ and any (ordered) sub-list of indices $I\le [k]$, write $\alpha^{I}$ for the ordered sub-list of $\alpha$ with indices in $I$. Write $|\alpha^I|$ for $\sum_{i\in I}|\alpha_i|$. In the special case $I=[k]$ write simply $|\alpha|:=|\alpha^I|$.

Let $I\sqcup J=[l]$ be a partition of $[l]$ in the usual sense, not respecting the order of $[l].$ Equip the subsets $I$ and $J$ with the order induced from $[l].$ Let $\gamma=(\gamma_1,\ldots,\gamma_l)$ be a list of differential forms. Define
$sgn(\sigma^\gamma_{I\cup J})$ by the equation
\[
\bigwedge_{i \in I} \gamma_i \wedge \bigwedge_{j \in J} \gamma_j = (-1)^{sgn(\sigma^\gamma_{I\cup J})}\bigwedge_{k \in [l]} \gamma_k,
\]
where the wedge products are taken in the order of the respective lists.
Explicitly,
\[
sgn(\sigma^\gamma_{I\cup J})\equiv
\sum_{\substack{i\in I,j\in J\\ j<i}}|\gamma_i|\cdot|\gamma_j|\pmod 2.
\]

\begin{cl}\label{q_rel}
For any fixed $\alpha=(\alpha_1,\ldots,\alpha_k)$, $\gamma=(\gamma_1,\ldots,\gamma_l)$,
\begin{align}\label{q_rel_eq}
&0=\sum_{\substack{S_3[l]\\(2:3)=\{j\}}}(-1)^{|\gamma^{(1:3)}|+1} \qkl(\alpha;
\gamma^{(1:3)}\otimes d\gamma_j\otimes\gamma^{(3:3)})+\mbox{}\\
&+\sum_{\substack{P\in S_3[k]\\I\sqcup J=[l]}}
(-1)^{\iota(\alpha,\gamma;P,I)}
\q_{|(1:3)|+|(3:3)|+1,|I|}(\alpha^{(1:3)}\otimes \q_{|(2:3)|,|J|} (\alpha^{(2:3)};\gamma^J)\otimes\alpha^{(3:3)};\gamma^I),\notag
\end{align}
where
\[
\iota(\alpha,\gamma;P,I)\equiv
\big(\sum_{j\in J}|\gamma_j|+1\big)\cdot \sum_{j\in (1:3)}(|\alpha_j|+1)+\sum_{j\in I}|\gamma_j|+sgn(\sigma_{I\cup J}^\gamma)\pmod 2.
\]
\end{cl}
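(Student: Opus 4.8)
The plan is to prove the $\q$-relations by a standard "boundary of a one-dimensional moduli space" argument, following the strategy of Fukaya--Oh--Ohta--Ono. The left-hand side being zero will be interpreted as a consequence of Stokes' theorem, Proposition~\ref{stokes}, applied to the evaluation map $evb_0^\beta : \M_{k+1,l}(\beta) \to L$: the integral of $d$ of a form pushed forward along $evb_0$ equals a sum of pushforwards along the fiberwise boundary, and each term in the claimed identity must be matched with a contribution coming from a boundary stratum of $\M_{k+1,l}(\beta)$ or from applying $d$ to one of the inserted forms $\gamma_j$.

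First I would fix $\beta$ and the marked-point data, and consider the differential form
\[
\xi \;=\; \bigwedge_{j=1}^l (evi_j^\beta)^*\gamma_j \wedge \bigwedge_{j=1}^k (evb_j^\beta)^*\alpha_j
\]
on $\M_{k+1,l}(\beta)$. Applying Stokes' theorem to $(evb_0^\beta)_*$ gives $0 = d\big((evb_0^\beta)_*\xi\big) - (evb_0^\beta)_*(d\xi) \mp (evb_0^\beta|_{\d})_*\xi$ only when $(evb_0^\beta)_*\xi$ is chosen to be a top form or, more precisely, one manipulates the full collection of these identities so that the exact terms telescope. The term $(evb_0^\beta)_*(d\xi)$, expanded via the Leibniz rule, produces precisely the first sum in the statement (insertion of $d\gamma_j$), up to the sign bookkeeping encoded in $(-1)^{|\gamma^{(1:3)}|}$ and $\varepsilon(\alpha;\gamma)$; the terms where $d$ hits an $(evb_j^\beta)^*\alpha_j$ recombine, after using $\q_{1,0}^{\beta_0}(\alpha) = d\alpha$, into the $i$-summand with $|(2:3)|=1$, $J = \emptyset$, $\beta = \beta_0$. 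The genuinely geometric input is the identification of the fiberwise boundary $\d \M_{k+1,l}(\beta)$: it decomposes into strata indexed by a splitting of $\beta = \beta' + \beta''$, a choice of which boundary marked points and which interior marked points go to each component, and which node is the distinguished one, i.e. exactly the data $(i \in S_3[k], I \sqcup J = [l])$ together with $\beta', \beta''$. On such a stratum, $\M_{k+1,l}(\beta)$ is a fiber product of $\M_{|(1:3)|+|(3:3)|+2, |I|}(\beta')$ and $\M_{|(2:3)|+1,|J|}(\beta'')$ over $L$ via the appropriate boundary evaluation maps, and Propositions~\ref{prop:pushcomp}--\ref{prop:pushfiberprod} convert the pushforward over this fiber product into the composition $\q(\alpha^{(1:3)} \otimes \q(\alpha^{(2:3)};\gamma^J) \otimes \alpha^{(3:3)}; \gamma^I)$.

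The key steps, in order, are: (i) recall the structure of $\d\M_{k+1,l}(\beta)$ as a union of fiber products over $L$, citing the gluing/degeneration analysis (the relevant codimension-one strata are those with exactly one interior node on the boundary of the domain); (ii) verify that the lower-dimensional strata, where the domain degenerates further or an interior marked point collides, do not contribute, either because they have codimension $\ge 2$ or because the relevant form vanishes there; (iii) apply Stokes (Proposition~\ref{stokes}) and the pushforward identities of the preceding section to turn the boundary contribution into the second sum; (iv) handle the Leibniz expansion of $d\xi$ to produce the first sum and the $J=\emptyset$, $\beta=\beta_0$ terms of the second sum, being careful that the "forbidden" unstable indices $(k,l,\beta) \in \{(1,0,\beta_0),(0,0,\beta_0)\}$ and their analogues for $\q_{-1,l}$ are exactly compensated by the explicitly defined terms $\q_{1,0}^{\beta_0}(\alpha) = d\alpha$, $\q_{0,0}^{\beta_0}=0$, etc.; (v) the sign computation: carefully track the Koszul signs from $\varepsilon(\alpha;\gamma)$, the orientation conventions for fiber products and induced boundary orientations from Section~\ref{ssec:orinetation}, the relative-spin orientation on the moduli spaces, and the reordering permutation $\sigma^\gamma_{I\cup J}$, and check they assemble into $\iota(\alpha,\gamma;i,I)$ and $(-1)^{|\gamma^{(1:3)}|}$.

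I expect the sign bookkeeping in step (v) to be the main obstacle: matching the induced boundary orientation on each fiber-product stratum (using the outward-normal-first convention and the fiber-product convention fixed in Section~\ref{ssec:orinetation}) against the signs $\varepsilon(\alpha^{(1:3)}\otimes\cdots;\gamma^I)$ and $\varepsilon(\alpha^{(2:3)};\gamma^J)$ hidden inside the two $\q$'s, plus the Koszul signs from permuting the $\gamma$'s past the $\alpha$'s and past each other, is delicate and must be done stratum-type by stratum-type. A secondary subtlety is the precise statement of Stokes for the full family: since $(evb_0^\beta)_*\xi$ is generally not a top-degree form, one does not get a single scalar identity but rather an identity of forms on $L$, which is fine because all $\q$ operations land in $C = A^*(L)\otimes R$; one simply sums over $\beta$ against $T^\beta$ and notes the sum converges by the energy bound built into $\L$. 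A further point needing care is that the moduli space may have corners of higher codimension (e.g. from multiple boundary nodes), but these contribute to $\d(\d\M)$ and cancel in pairs or lie in the image of a map of positive codimension, so they are invisible to the pushforward; I would invoke the manifold-with-corners formalism of~\cite{Joyce2} to make this precise.
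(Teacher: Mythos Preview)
Your approach is exactly the one in the paper: apply the fiber-integration Stokes theorem (Proposition~\ref{stokes}) to $\xi$ on $\M_{k+1,l}(\beta)$, expand $d\xi$ by Leibniz, and identify each codimension-one boundary stratum with a fiber product $\M_{k_1+1,l_1}(\beta_1)\times_L \M_{k_2+1,l_2}(\beta_2)$, whose contribution becomes a composition of two $\q$'s via properties~\eqref{prop:pushcomp}--\eqref{prop:pushfiberprod}.

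One point of confusion to fix: you seem unsure what to do with the term $d\big((evb_0^\beta)_*\xi\big)$, suggesting it only helps when top-degree or must ``telescope.'' Neither is needed. Proposition~\ref{stokes} is already an identity of differential forms on $L$ in every degree, and this term \emph{is} one of the summands in the claimed relation: it is precisely the composition with outer $\q_{1,0}^{\beta_0}=d$, i.e.\ the summand with $(k_1,l_1,\beta_1)=(1,0,\beta_0)$ and $(k_2,l_2,\beta_2)=(k,l,\beta)$. Together with the $d\alpha_i$ terms from $f_*(d\xi)$ (which supply the inner $\q_{1,0}^{\beta_0}$ summands) and the genuine boundary contributions, this accounts for the full sum over $S_3[k]$ and $I\sqcup J=[l]$; nothing cancels or telescopes. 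The boundary-orientation sign the paper uses for the stratum $\vartheta:M_1\times_L M_2\to B$ is $(-1)^{\delta}$ with $\delta=k_2(k_1-i)+i-n$, and matching this against $\varepsilon$, the Koszul reorderings, and $sgn(\sigma^\gamma_{I\cup J})$ to obtain $\iota(\alpha,\gamma;i,I)$ is indeed where essentially all of the labor sits, as you anticipate in step~(v).
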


\begin{figure}[ht]
\centering
\includegraphics[width=9cm]{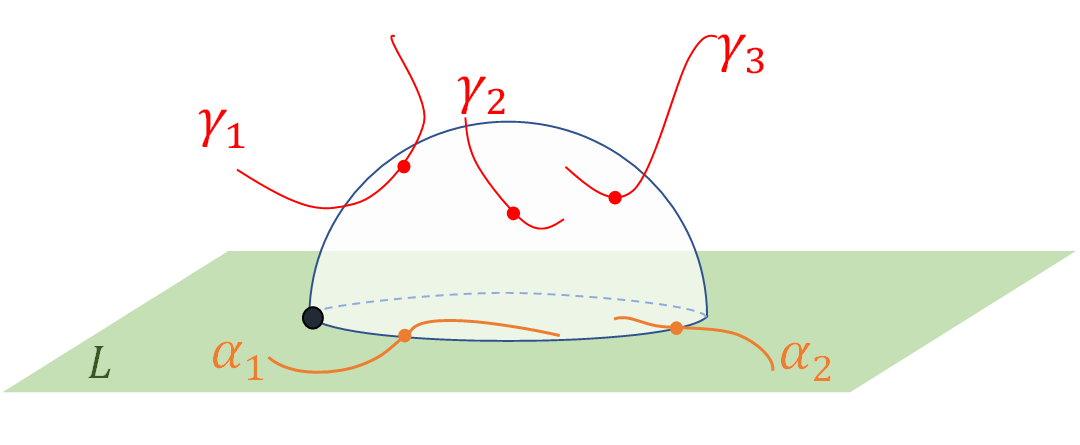}
\caption{The fat black dot represents a point of the chain $Q$ ``Poincar\'e dual'' to $\q_{2,3}(\a_1,\a_2;\gamma_1,\gamma_2,\gamma_3)$.}
\label{fig:mdsp}
\end{figure}

\begin{figure}
\centering
\includegraphics[width=9cm]{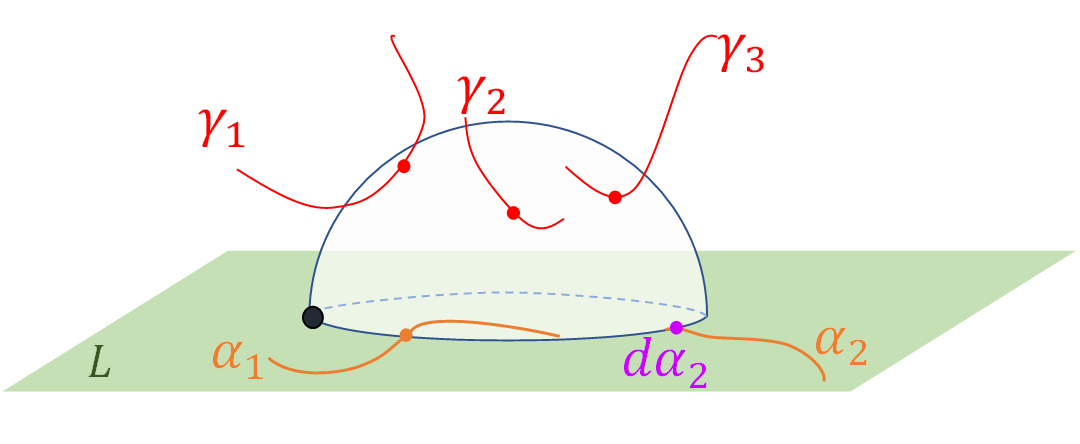}
\caption{The fat black dot represents a point of the portion of the boundary of $Q$ arising from $J$-holomorphic disks passing through the boundary of the constraint corresponding to $\a_2$.}
\label{fig:bdda}
\end{figure}

\begin{figure}
\centering
\includegraphics[width=9cm]{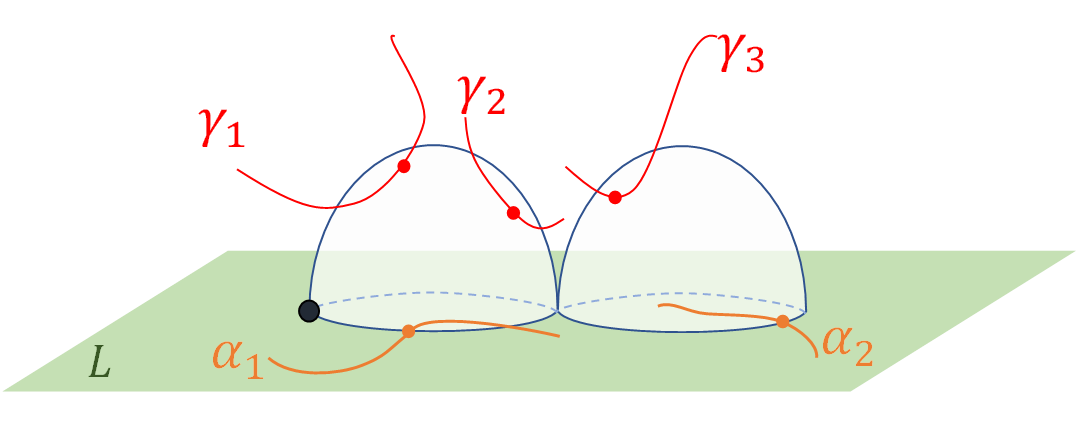}
\caption{The fat black dot represents a point of the portion of the boundary of $Q$ arising from the boundary of the moduli spaces $\M_{3,3}(\beta)$.}
\label{fig:bdbubb}
\end{figure}

Intuitively, equation~\eqref{q_rel_eq} describes the boundary of the chain Q ``Poincar\'e dual'' to $\qkl(\a;\gamma)$. Indeed, the term of the second summand of equation~\eqref{q_rel_eq} with $(|(1:3)|,|I|,\beta) = (1,0,\beta_0)$ corresponds to the boundary of $Q.$ Think of the chain $Q$ as the union of the boundaries of $J$-holomorphic disks passing through constraints ``Poincar\'e dual'' to $\a$ and $\gamma.$ See Figure~\ref{fig:mdsp}. One type of contribution to the boundary of Q comes from $J$-holomorphic disks passing through the boundary of the constraints corresponding to $\a$ and $\gamma$, as in the first summand of equation~\eqref{q_rel_eq} and the part of the second summand that corresponds to $(|(2:3)|,|J|,\beta)=(1,0,\beta_0)$. See Figure~\ref{fig:bdda}. The other type of contribution to the boundary of $Q$ comes  from the boundaries of the moduli spaces $\M_{k+1,l}(\beta)$ arising from disk bubbling and is reflected in the remainder of the second summand of equation~\eqref{q_rel_eq}. See Figure~\ref{fig:bdbubb}.

A proof of Proposition~\ref{q_rel} is given in Section~\ref{sssec:proof} below using Propositions~\ref{prop:proppp} and~\ref{stokes} and the description of the boundary of $\M_{k+1,l}(\beta)$ in terms of fiber-products.

\begin{cl}\label{q_-1_rel}
For any $\gamma=(\gamma_1,\ldots,\gamma_l)$,
\begin{align}\label{eq:q-1}
0=&\sum_{(2:3)=\{j\}}(-1)^{|\gamma^{(1:3)}|+1} \q_{-1,l}(\gamma^{(1:3)}\otimes d\gamma_j\otimes\gamma^{(3:3)})+\\
&+\frac{1}{2}\sum_{I\sqcup J=\{1,\ldots,l\}}
(-1)^{\iota(\gamma;I)}
\langle\q_{0,|I|}(\gamma^I),\q_{0,|J|}(\gamma^J)\rangle
+(-1)^{|\gamma|+1}\int_L i^* \q_{\emptyset,l}(\gamma), \notag
\end{align}
where
\[
\iota(\gamma;I)\equiv
\sum_{j\in I}|\gamma_j|+sgn(\sigma_{I\cup J}^\gamma)\pmod 2.
\]
\end{cl}

Intuitively, equation~\eqref{eq:q-1} describes the boundary of the 1-dimensional part of the chain $Q'$ ``Poincar\'e dual'' to $\bigwedge_{j=1}^l (evi_j^\beta)^*\gamma_j$, the integrand in the definition of $\q_{-1,l}(\gamma)$. Think of $Q'$ as the space of $J$-holomorphic disks passing through constraints Poincar\'e dual to $\gamma.$ One type of contribution to the boundary of $Q'$ comes from $J$-holomorphic disks passing through the boundary of the constraints corresponding to $\gamma$, as in the first summand of equation~\eqref{eq:q-1}. The second type of contribution to the boundary of $Q'$ comes from the boundaries of the moduli spaces $\M_{0,l}(\beta)$; disk bubbling is reflected in the second summand of equation~\eqref{eq:q-1} and the collapse of the boundary of the disk to a point is reflected in the third summand.

A proof of Proposition~\ref{q_-1_rel} is given in Section~\ref{ssec:q-1} below using Propositions~\ref{prop:proppp} and~\ref{stokes} and the description of the boundary of $\M_{0,l}(\beta)$ in terms of fiber-products.

Fix a closed form $\gamma\in \mI_QD$ with $|\gamma|=2$.
For $\alpha_1,\ldots,\alpha_k \in C,$ define
\begin{equation}\label{eq:mkqk}
\m_k^{\beta,\gamma}(\otimes_{j=1}^k\alpha_j)=
\sum_l\frac{1}{l!}\qkl^\beta(\otimes_{j=1}^k\alpha_j;\gamma^{\otimes l}),\quad
\m_k^{\gamma}(\otimes_{j=1}^k\alpha_j)=
\sum_l\frac{1}{l!}\qkl(\otimes_{j=1}^k\alpha_j;\gamma^{\otimes l}),
\end{equation}
for all $k\ge -1,l\ge 0$. In particular, $\m_k^{\beta,\gamma}(\otimes_{j=1}^k\alpha_j) \in C$ and $\m_{-1}^\gamma\in R$. Observe that this definition of $\m_k$ agrees with the definition in Section~\ref{ssec:sor} for $k \geq 0.$
\begin{cl}[$A_\infty$ relations]\label{cl:a_infty_m}
The operations $\{\m_k^\gamma\}_{k\ge 0}$ define an $A_\infty$ structure on $C$. That is,
\begin{equation*}\label{eq:a-infty}
\sum_{S_3[k]}(-1)^{\sum_{j\in(1:3)}(|\alpha_j|+1)}
\mg_{|(1:3)|+|(3:3)|+1}(\alpha^{(1:3)}\otimes\mg_{|(2:3)|}(\alpha^{(2:3)}) \otimes\alpha^{(3:3)})=0.
\end{equation*}
\end{cl}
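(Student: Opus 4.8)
The plan is to derive Claim~\ref{cl:a_infty_m} from the $\q$-relations of Claim~\ref{q_rel} by specializing all interior inputs to the fixed closed form $\gamma$ and summing over the number of interior marked points with the appropriate combinatorial weights. First I would plug in $\alpha = (\alpha_1,\ldots,\alpha_k)$ and $\gamma = (\gamma,\ldots,\gamma)$ ($l$ copies) into the $\q$-relation, then multiply by $\frac{1}{l!}$ and sum over $l \ge 0$. Since $d\gamma = 0$, the entire first sum in the $\q$-relation (the term with $d\gamma_j$) vanishes identically, which is precisely why we require $\gamma$ closed. What remains is the second sum, and the task is to reorganize it into the $A_\infty$ relation $\sum_{S_3[k]}(-1)^{\sum_{j\in(1:3)}(\deg_C\alpha_j+1)}\mg_{k_1}(\alpha^{(1:3)}\otimes\mg_{k_2}(\alpha^{(2:3)})\otimes\alpha^{(3:3)})=0$.

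The key combinatorial step is the handling of the interior partition $I \sqcup J = [l]$. With all $\gamma_j$ equal to the same form $\gamma$, a partition into sub-lists of sizes $|I| = l_1$ and $|J| = l_2$ with $l_1 + l_2 = l$ occurs $\binom{l}{l_1}$ times, and each contributes the same value since the inputs are identical. Thus $\frac{1}{l!}\sum_{I \sqcup J = [l]}(\cdots) = \sum_{l_1 + l_2 = l}\frac{1}{l_1!\,l_2!}(\cdots)$, and summing over $l$ factorizes the double interior sum into the product defining $\m^\gamma_{k_1}$ (absorbing $\gamma^{\otimes l_1}$) composed with $\m^\gamma_{k_2}$ (absorbing $\gamma^{\otimes l_2}$). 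One must also check that the $T^\beta$ factors combine correctly: the $\q$-relation for fixed total degree $\beta$ splits $\beta = \beta_1 + \beta_2$ over the inner and outer $\q$'s via the gluing of moduli spaces, and $T^{\beta_1}T^{\beta_2} = T^\beta$ reassembles $\mg_{k_1}\circ\mg_{k_2}$ correctly after summing over $\beta$.

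The remaining point is the sign. In the $\q$-relation the sign is $\iota(\alpha,\gamma;i,I)$, and once $\deg_D\gamma = 2$ is even, every term involving $\sum_{j\in J}|\gamma_j|$, $\sum_{j\in I}|\gamma_j|$, and $sgn(\sigma^\gamma_{I\cup J})$ (which is a sum of products $|\gamma_i|\cdot|\gamma_j|$) becomes even and drops out mod $2$. What survives is exactly $\sum_{j\in(1:3)}(\deg_C\alpha_j+1)$, matching the sign in the $A_\infty$ relation. I would also note that the degenerate cases excluded in the definition of $\qkl^\beta$ — namely $(k,l,\beta) \in \{(1,0,\beta_0),(0,0,\beta_0)\}$ — are handled by the separate definitions $\q^{\beta_0}_{1,0}(\alpha) = d\alpha$ and $\q^{\beta_0}_{0,0} = 0$, so that $\mg_1$ picks up the differential $d\alpha$ in energy zero (consistent with Theorem~\ref{thm:prop}\eqref{prop3}) and the $A_\infty$ relation still closes up; one should check these boundary terms appear with the correct sign and multiplicity, e.g.\ the $l=0$, $\beta = \beta_0$ contributions reproduce the classical relation $d(d\alpha) = 0$ type identities within the larger sum.

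The main obstacle I anticipate is purely bookkeeping: verifying that the index set $S_3[k] \times \{I \sqcup J = [l]\}$ of the $\q$-relation, after the $\frac{1}{l!}$-weighted sum over $l$ and the $\binom{l}{l_1}$ collapse, maps bijectively and sign-compatibly onto the index set $S_3[k]$ of the $A_\infty$ relation with the interior insertions correctly distributed between the inner and outer operations. There is no conceptual difficulty — the closedness of $\gamma$ kills the one genuinely new term, and the evenness of $\deg_D\gamma$ trivializes the interior signs — but the term-matching and the factorial/binomial reorganization must be done carefully. I would present this as a short formal manipulation rather than a lengthy computation, citing Claim~\ref{q_rel} as the substantive input.
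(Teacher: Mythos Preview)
Your proposal is correct and follows the same approach as the paper: the paper's proof is the single sentence ``Since we have assumed $d\gamma = 0,$ this is a special case of Proposition~\ref{q_rel},'' and you have simply spelled out the bookkeeping (the $\binom{l}{l_1}$ collapse of the interior partition, the vanishing of the $\gamma$-dependent pieces of $\iota$ when $\deg_D\gamma$ is even) that the paper leaves implicit. There is nothing to correct.
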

\begin{proof}
Since we have assumed $d\gamma = 0$ and $|\gamma|=2$, this follows from equation~\eqref{eq:mkqk} and Proposition~\ref{q_rel}.
\end{proof}

\subsection{Proof for \texorpdfstring{$k\ge 0$}{k >=0}}\label{sssec:proof}
This section is devoted to the proof of Proposition~\ref{q_rel}.

\begin{lm}\label{lm:deg_q}
The map $evb_0:\M_{k+1,l}(\beta)\to L$ satisfies
$\rdim(evb_0^\beta)\equiv k\pmod 2.$
\end{lm}
\begin{proof}
Since $L$ is orientable, $\mu(\beta)$ is even. Therefore,
\[
\rdim(evb_0^{\beta})
=n-3+\mu(\beta)+k+1+2l-n
=\mu(\beta)+k+2l-2
\equiv k \pmod 2.
\]
\end{proof}

For a list of indices $I$, denote by $\M_{k+1,I}(\beta)$ the moduli space diffeomorphic to $\M_{k+1,|I|}(\beta)$ with interior marked points labeled by $I$. It carries evaluation maps $evb_j^\beta$ with $j\in\{0,\ldots,k\}$ and $evi_m^\beta$ with $m$ taken from $I$. Note that the diffeomorphism
\[
\M_{k+1,I}(\beta)\stackrel{\sim}{\lrarr} \M_{k+1,|I|}(\beta)
\]
preserves orientation, no matter how we identify $I$ with $[|I|]$.

\begin{prop}\label{rem:gluingsign}
Let $k\in \Z_{\ge -1},$ $l\in \Z_{\ge 0},$ $\beta\in \sly$. Let $k_i,\beta_i,$ ($i=1,2$) be such that $k_1+k_2=k+1$ and $\beta_1+\beta_2=\beta$. Let $I\sqcup J=[l]$ be a partition.
Let $B\subset \d \M_{k+1,l}(\beta)$ be the boundary component where a disk bubbles off at the $i$-th boundary point, with $k_2$ of the boundary marked points and the interior marked points labeled by $J$. See Figure~\ref{fig:q_eq}. Then the canonical map
\[
\vartheta:\M_{k_1+1,\,I}(\beta_1) \prescript{}{evb_i^{\beta_1}}\times_{evb_0^{\beta_2}}  \M_{k_2+1,\,J}(\beta_2) \stackrel{\sim}{\lrarr}B
\]
is a diffeomorphism unless $k = -1, I = \emptyset = J$, and $\beta_1 = \beta_2.$ In the exceptional case, $\vartheta$ is a $2$ to $1$ local diffeomorphism in the orbifold sense. In both cases, $\vartheta$ changes orientation by the sign $(-1)^{\delta_1}$ with
\begin{equation}\label{eq:delta}
\delta_1:=k_2(k_1-i)+i-n.
\end{equation}
\end{prop}
\begin{proof}
The case $i = 1$ is~\cite[Proposition 8.3.3]{FOOO}. The proof for other values of $i$ is similar. See also~\cite[Theorem 4.3.3(b)]{WehrheimWoodward} for an in-depth discussion of the sign of gluing at a boundary node.
\end{proof}

\begin{figure}[ht]
\centering
\includegraphics[width=9cm]{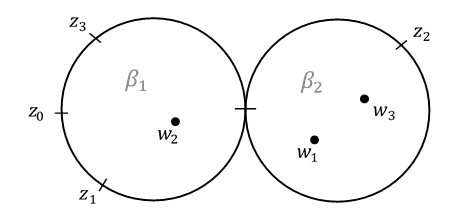}
\caption{The domain of an element of $B\subset \d\M_{4,3}(\beta),$ with $k_1=3,$ $k_2=1,$ $i=2,$ $I=\{2\},$ and $J=\{1,3\}$.}
\label{fig:q_eq}
\end{figure}

For $k\in\Z_{\ge 0}$ and $a=(a_1,\ldots,a_m)\in \Z^{\times m},$ denote
\[
\varepsilon'(k):=\sum_{j=1}^k j+1=\frac{k(k+1)}{2}+1,\qquad
\varepsilon''(a):=\sum_{j=1}^mj\cdot a_j.
\]
In particular, for a list $a$ of length $k$, we have
\[
\varepsilon(a)=\varepsilon'(k)+\varepsilon''(a).
\]
As with $\varepsilon,$ we allow differential forms as input for $\varepsilon''$ in lieu of their degrees.

\begin{lm}\label{lm:sepepsilon}\label{lm:sumofepsilons}
Let $\alpha=(\a_1,\ldots,\a_k)\in A^*(L;R)^{\otimes k}$ and $\gamma=(\gamma_1,\ldots,\gamma_l)\in A^*(X;Q)^{\otimes l}.$ Fix an element of $S_3[k]$ and $I\sqcup J$ a partition of $[l]$, and set $k_1=|(1:3)|+|(3:3)|+1$, $k_2=|(2:3)|$, and $i=|(1:3)|+1$.
Then
\begin{enumerate}
  \item $\varepsilon'(k_1)+\varepsilon'(k_2)
      \equiv\varepsilon'(k)+ k+k_1k_2\pmod 2.$
  \item
  \begin{multline*}
\qquad\varepsilon''(\alpha^{(1:3)},|\alpha^{(2:3)}|+|\gamma^J|+k_2, \alpha^{(3:3)}) +\varepsilon''(\a^{(2:3)})
      \equiv\\
     \equiv\varepsilon''(\a)+ik_2+k_2|\a^{(3:3)}|+ |\a|+|\a^{(1:3)}|+ i|\gamma^J|\pmod 2.
  \end{multline*}
  \item
  \begin{multline*}
  \qquad \varepsilon(\alpha^{(1:3)},|\alpha^{(2:3)}|+|\gamma^J|+k_2, \alpha^{(3:3)})+ \varepsilon(\alpha^{(2:3)})
\equiv\\
\equiv
\varepsilon(\a)+|\a|+k+|\a^{(1:3)}|
+i|\gamma^J|+k_2|\a^{(3:3)}|+k_1k_2+ik_2
\pmod 2.
\end{multline*}
\end{enumerate}
\end{lm}

\begin{proof}\leavevmode
\begin{enumerate}
\item
Recall that $k_1+k_2=k+1$. Therefore,
\begin{align*}
\varepsilon'(k_1)+\varepsilon'(k_2)&=
\frac{k_1(k_1+1)+k_2(k_2+1)}{2}+2\\
\equiv& \frac{k_1^2+k_2^2+k+1}{2}\\
=&\frac{(k_1+k_2)^2-2k_1k_2+k+1}{2}\\
\equiv&\frac{(k+1)(k+1+1)}{2}+k_1k_2\\
\equiv& \frac{k(k+1)}{2}+k+1+k_1k_2\\
=&\varepsilon'(k)+k+k_1k_2 \pmod 2.
\end{align*}
\item
\begin{align*}
\varepsilon''(\alpha^{(1:3)}&,|\alpha^{(2:3)}|+|\gamma^J|+k_2, \alpha^{(3:3)})
+\varepsilon''(\a^{(2:3)})
\equiv\\
\equiv&
\sum_{j=1}^{i-1}j|\a_j|+i(|\alpha^{(2:3)}|+|\gamma^J|+k_2)+ \sum_{j=i+1}^{k_1}j|\a_{j+k_2-1}|+
\sum_{j=1}^{k_2}j|\a_{j+i-1}|\\
\equiv&
\sum_{j=1}^{i-1}j|\a_j|+i(|\alpha^{(2:3)}|+k_2)+ \sum_{j=i+k_2}^{k}(j-k_2+1)|\a_j|+
\sum_{j=i}^{i+k_2-1}(j-i+1)|\a_j|+i|\gamma^J|\\
\equiv&
\sum_{j=1}^{k}j|\a_j|+ik_2 -k_2|\a^{(3:3)}|+|\a^{(3:3)}|+|\a^{(2:3)}|+ i|\gamma^J|\\
\equiv&
\varepsilon''(\a)+ik_2+k_2|\a^{(3:3)}|+ |\a|+|\a^{(1:3)}|+ i|\gamma^J| \pmod 2.
\end{align*}
\item
This is the result of summing the two first statements.
\end{enumerate}

\end{proof}

\begin{lm}\label{lm:star}
Let $B$ be the boundary component of $\M_{k+1,l}(\beta)$ described in Proposition~\ref{rem:gluingsign}, and let $\delta_1$ be the sign of the gluing map given there. Fix the ordered $3$-partition of $[k]$ such that $i=|(1:3)|+1$ and $k_2=|(2:3)|$. Write $l_1:=|I|,$ $l_2:=|J|$.
Then
\[
(evb_0|_B)_*\big(\bigwedge_{j=1}^l evi_j^*\gamma_j\wedge\bigwedge_{j=1}^k evb_j^*\alpha_j\big)=
(-1)^{*}
\q_{k_1,l_1}^{\beta_1}(\alpha^{(1:3)}\otimes \q_{k_2,l_2}^{\beta_2}(\alpha^{(2:3)};\gamma^J)\otimes \alpha^{(3:3)};\gamma^I),
\]
with
\begin{multline}\label{eq:starone}
*=
\delta_1+|\gamma^J|\cdot|\alpha^{(1:3)}|
+\rdim(evb_0^{\beta_2})\cdot|\a^{(3:3)}|+\\
+\varepsilon(\alpha^{(1:3)}, (evb_0^{\beta_2})_*\xi_2,\alpha^{(3:3)})+ \varepsilon(\alpha^{(2:3)})+sgn(\sigma_{I\cup J}^\gamma),
\end{multline}
or, equivalently,
\begin{equation}\label{eq:startwo}
*=i+n+|\gamma^J|\cdot|\alpha^{(1:3)}|+\varepsilon(\a)+ |\a|+k+|\a^{(1:3)}|
+i|\gamma^J|+sgn(\sigma_{I\cup J}^\gamma).
\end{equation}
\end{lm}

\begin{proof}
Write
\[
\xi=\bigwedge_{j=1}^l evi_j^*\gamma_j\wedge\bigwedge_{j=1}^k evb_j^*\alpha_j.
\]
Consider the pull-back diagram
\[
\xymatrix{
{\M_{k_1+1,\,I}(\beta_1)\times_L\M_{k_2+1,\,J}(\beta_2)}\ar[r]^(.6){\quad p_2}\ar[d]^{p_1}&
{\M_{k_2+1,\,J}(\beta_2)}\ar[d]^{evb_0^{\beta_2}}\\
{\M_{k_1+1,\,I}(\beta_1)}\ar[r]^{\quad evb_i^{\beta_1}}&L .
}
\]
We use the notation $evb_j^{\beta_i},evi_j^{\beta_i},$ for $i=1,2,$ to denote the evaluation maps on the spaces $\M_{k_1+1,\,I}(\beta_1),\M_{k_2+1,\,J}(\beta_2),$ respectively.
Set
\[
\bar\xi:=\vartheta^*\xi,
\]
with $\vartheta$ from Proposition~\ref{rem:gluingsign}, and
\begin{gather*}
\xi_1:=\bigwedge_{j\in I}(evi_j^{\beta_1})^*\gamma_j\wedge\bigwedge_{j=1}^{i-1}(evb_j^{\beta_1})^*\alpha_j\wedge\bigwedge_{j=i+1}^{k_1}(evb_j^{\beta_1})^*\alpha_{j+k_2-1},\\
\xi_2:=\bigwedge_{j\in J}(evi_j^{\beta_2})^*\gamma_j\wedge \bigwedge_{j=1}^{k_2} (evb_j^{\beta_2})^*\alpha_{j+i-1}.
\end{gather*}
Note that
\[
\bar\xi=(-1)^{\delta_2}p_1^*\xi_1\wedge p_2^*\xi_2,
\]
with
\[
\delta_2:=(|\alpha^{(2:3)}|+|\gamma^J|)\cdot|\alpha^{(3:3)}|+|\gamma^J|\cdot|\alpha^{(1:3)}|+
sgn(\sigma_{I\cup J}^\gamma).
\]
By property \eqref{prop:pushfiberprod} of the push forward,
\begin{equation*}
(evb_i^{\beta_1})^*(evb_0^{\beta_2})_*\xi_2=p_{1*}^{\mbox{}}p_2^*\xi_2.
\end{equation*}
Using in addition
properties \eqref{prop:pushcomp}-\eqref{prop:pushpull}, we compute
\begin{align*}
(evb_0|_{B}&)_*\xi=
(-1)^{\delta_1}(evb_0^{\beta_1})_*p_{1*}\bar\xi\\
=&(-1)^{\delta_1+\delta_2}(evb_0^{\beta_1})_*^{\mbox{}}p_{1*}^{\mbox{}}\left(p_1^*\xi_1\wedge p_2^*\xi_2\right)\\
=&(-1)^{\delta_1+\delta_2}(evb_0^{\beta_1})_*\left(\xi_1\wedge p_{1*}^{\mbox{}}p_2^*\xi_2\right)\\
=&(-1)^{\delta_1+\delta_2}(evb_0^{\beta_1})_*\left(\xi_1\wedge (evb_i^{\beta_1})^*(evb_0^{\beta_2})_*\xi_2\right)\\
=&(-1)^{\delta_1+\delta_2 +|(evb_0^{\beta_2})_*\xi_2|\cdot|\alpha^{(3:3)}|}(evb_0^{\beta_1})_*\Bigg(
\bigwedge_{j\in I}(evi_j^{\beta_1})^*\gamma_j\wedge\\
&\hspace{7em}\wedge\bigwedge_{j=1}^{i-1}(evb_j^{\beta_1})^*\alpha_j
\wedge(evb_i^{\beta_1})^*(evb_0^{\beta_2})_*\xi_2
\wedge\bigwedge_{j=i+1}^{k_1}(evb_j^{\beta_1})^*\alpha_{j+k_2-1}
\Bigg),\\
\intertext{and since $(evb_0^{\beta_2})_*\xi_2 = (-1)^{\varepsilon(\alpha^{(2:3)})}\q_{k_2,l_2}^{\beta_2}(\alpha^{(2:3)};\gamma^J),$ we continue}
=&(-1)^*
\q_{k_1,l_1}^{\beta_1}(\alpha^{(1:3)}\otimes\q_{k_2,l_2}^{\beta_2}(\alpha^{(2:3)};\gamma^J)\otimes\alpha^{(3:3)};\gamma^I)
\end{align*}
with
\[
*=\delta_1+\delta_2+ |(evb_0^{\beta_2})_*\xi_2|\cdot|\alpha^{(3:3)}|+
\varepsilon(\alpha^{(1:3)}, (evb_0^{\beta_2})_*\xi_2,\alpha^{(3:3)})+ \varepsilon(\alpha^{(2:3)}).
\]
Note that
\[
|(evb_0^{\beta_2})_*\xi_2|=|\alpha^{(2:3)}|+ |\gamma^J|-\rdim(evb_0).
\]
Therefore,
\begin{align*}
*
\equiv& \delta_1 +(|\alpha^{(2:3)}|+|\gamma^J|)\cdot|\alpha^{(3:3)}| +|\gamma^J|\cdot|\alpha^{(1:3)}|+
sgn(\sigma_{I\cup J}^\gamma) +\\
&+(|\alpha^{(2:3)}|+ |\gamma^J| +\rdim(evb_0))|\a^{(3:3)}|+
\varepsilon(\alpha^{(1:3)}, (evb_0^{\beta_2})_*\xi_2,\alpha^{(3:3)})+ \varepsilon(\alpha^{(2:3)})\\
\equiv& \delta_1 +|\gamma^J|\cdot|\alpha^{(1:3)}|+
sgn(\sigma_{I\cup J}^\gamma) +\rdim(evb_0)|\a^{(3:3)}|+\\
&\hspace{13em}+
\varepsilon(\alpha^{(1:3)}, (evb_0^{\beta_2})_*\xi_2,\alpha^{(3:3)})+ \varepsilon(\alpha^{(2:3)}) \pmod 2.
\end{align*}
This proves equation~\eqref{eq:starone}. By the definition~\eqref{eq:delta} of $\delta_1,$ Lemma~\ref{lm:deg_q}, and Lemma~\ref{lm:sumofepsilons}, we therefore have
\begin{align*}
*\equiv&
k_1k_2+ik_2+i+n +|\gamma^J|\cdot|\alpha^{(1:3)}|+
sgn(\sigma_{I\cup J}^\gamma) +\\
&+k_2|\a^{(3:3)}|+
\varepsilon(\a)+|\a|+k+|\a^{(1:3)}|
+i|\gamma^J|+k_2|\a^{(3:3)}|+k_1k_2+ik_2\\
\equiv&
i+n +|\gamma^J|\cdot|\alpha^{(1:3)}|+
sgn(\sigma_{I\cup J}^\gamma) +
\varepsilon(\a)+|\a|+k+|\a^{(1:3)}|
+i|\gamma^J| \pmod 2.
\end{align*}
This proves equation~\eqref{eq:startwo}.

\end{proof}

\begin{proof}[Proof of Proposition \ref{q_rel}]
Apply Proposition~\ref{stokes} to the case $M=\M_{k+1,l}(\beta)$, $f=evb_0$, and
\[
\xi=\bigwedge_{j=1}^l evi_j^*\gamma_j\wedge\bigwedge_{j=1}^k evb_j^*\alpha_j.
\]
Let us see how each of the elements in Stokes' theorem looks in terms of $\q$.

\textit{First element: $d(f_*\xi)$}.
This is
\[
d((evb_0)_*\xi)=
(-1)^{\varepsilon(\alpha)} \q_{\,1,0}^{\beta_0}\big(\qkl^{\beta}(\alpha;\gamma)\big).
\]

\textit{Second element: $f_*(d\xi)$}.
This gives
\begin{align*}
(evb_0)_*(d\xi)=&(evb_0)_*\left(d\left(\bigwedge_{j=1}^l evi_j^*\gamma_j\right)\wedge\bigwedge_{j=1}^k evb_j^*\alpha_j\right)+\mbox{}\\
&+(-1)^{|\gamma|}(evb_0)_*\left(\bigwedge_{j=1}^l evi_j^*\gamma_j\wedge d\left(\bigwedge_{j=1}^k evb_j^*\alpha_j\right)\right)\\
=&\sum_{\substack{S_3[l] \\ (2:3) = \{i\}}}(-1)^{\varepsilon(\alpha)+|\gamma^{(1:3)}|}\q^\beta_{k,l}(\alpha;\gamma^{(1:3)}\otimes d\gamma_i\otimes\gamma^{(3:3)})+ \mbox{}\\
&+\sum_{\substack{S_3[k] \\ (2:3) = \{i\}} }(-1)^{|\gamma|+\varepsilon(\alpha)+i+\sum_{j=1}^{i-1}|\alpha_j|}\q^{\beta}_{k,l}(\alpha^{(1:3)}\otimes d\alpha_i\otimes\alpha^{(3:3)};\gamma).
\end{align*}
Further,
\[
\qkl^\beta(\alpha^{(1:3)}\otimes d\alpha_i\otimes\alpha^{(3:3)};\gamma)
=
\qkl^{\beta}(\alpha^{(1:3)}\otimes\q_{1,\,0}^{\beta_0}
(\alpha^{(2:3)})\otimes\alpha^{(3:3)};\gamma).
\]

\textit{Third element: $\big(f\big|_{\partial M}\big)_*\xi$}.

Let $B$ be a boundary component as in Proposition~\ref{rem:gluingsign}. Write $l_1:=|I|,$ $l_2:=|J|$.

The dimension of the domain of $evb_0$ is
\[
k+1+2l+\mu(\beta)+n-3=k-2+2l+\mu(\beta)+n\equiv k+n\pmod 2,
\]
and $|\xi| = |\alpha| + |\gamma|.$ Therefore, the contribution of $(f|_B)_*\xi$ to Stokes' theorem comes with the sign $(-1)^{|\alpha| + |\gamma| + k +n}.$
We claim that
\begin{align*}
-(-1)^{|\alpha|+|\gamma|+k+n}(f|_{B})_*\xi
=(-1)^{\varepsilon(\alpha)+\iota(\alpha,\gamma;P,I)}
\q_{k_1,l_1}^{\beta_1}(\alpha^{(1:3)}\otimes \q_{k_2,l_2}^{\beta_2}(\alpha^{(2:3)};\gamma^J)\otimes \alpha^{(3:3)};\gamma^I).
\end{align*}
Indeed,
by Lemma~\ref{lm:star}, we have
\begin{align*}
*+|\alpha|&+|\gamma|+k+n+1+\varepsilon(\alpha)
\equiv\\
\equiv&
i+n+|\gamma^J|\cdot|\alpha^{(1:3)}|+\varepsilon(\a) +|\a|+k+|\a^{(1:3)}|+i|\gamma^J|+sgn(\sigma_{I\cup J}^\gamma)+\\
&+|\alpha|+|\gamma|+k+n+1+\varepsilon(\alpha)\\
\equiv&
i+|\gamma^J|\cdot|\alpha^{(1:3)}|+|\a^{(1:3)}|+i|\gamma^J| +sgn(\sigma_{I\cup J}^\gamma)+|\gamma|-1\\
\equiv&
|\gamma^J|\cdot(|\alpha^{(1:3)}|+(i-1))+(|\a^{(1:3)}|+i-1)
+|\gamma|+|\gamma^J|+sgn(\sigma_{I\cup J}^\gamma)\\
\equiv&
(|\gamma^J|+1)\cdot(|\alpha^{(1:3)}|+(i-1))
+|\gamma^I|+sgn(\sigma_{I\cup J}^\gamma)\\
\equiv&\iota(\a;\gamma;P,I) \pmod 2.
\end{align*}

Since there is one boundary node, $k_1$ is at least $1$.
Also, the stability of each of the disk components implies that
\[
(\beta_1,k_1,l_1)\ne (\beta_0,1,0),\quad (\beta_2,k_2,l_2)\ne (\beta_0,1,0),(\beta_0,0,0).
\]
So, the total contribution of the summand
$(-1)^{s+t+1}\big(f\big|_{\partial M}\big)_*\xi$ in Stokes' theorem is
\[
(-1)^{\varepsilon(\alpha)}\hspace{-3em}\sum_{\substack{\beta_1+\beta_2=\beta\\ k_1+k_2=k+1,\; k_1\ge 1\\ l_1+l_2=l \\(\beta_1,k_1,l_1)\ne(\beta_0,1,0)
\\ (\beta_2,k_2,l_2)\not\in\{(\beta_0,0,0),(\beta_0,1,0)\}}}
\hspace{-3em}(-1)^{\iota(\alpha,\gamma;P,I)}\q_{k_1,l_1}^{\beta_1}(\alpha^{(1:3)}\otimes\q_{k_2,l_2}^{\beta_2} (\alpha^{(2:3)};\gamma^{J})\otimes\alpha^{(3:3)};\gamma^{I}).
\]

\textit{Deducing the equations}.
All that is left now is to plug the various expressions into Stokes' formula. Let us rewrite it first:
\[
0=d (f_*\xi)-f_*(d \xi)-(-1)^{s+t}\big(f\big|_{\partial M}\big)_*\xi.
\]
We showed that
\begin{align*}
0=&(-1)^{\varepsilon(\alpha)}
\Big(\q_{\,1,0}^0(\qkl^\beta(\alpha;\gamma))+
(-1)^{|\gamma^{(1:3)}|+1}\qkl^\beta(\alpha;\gamma^{(1:3)},d\gamma_i,\gamma^{(3:3)})+\\
&\quad+
(-1)^{|\gamma|+\sum_{j=1}^{i-1}|\alpha_j|+i+1}\qkl^\beta(\alpha^{(1:3)},d\alpha_i,\alpha^{(3:3)};\gamma)+\\
&\quad+\hspace{-2em}\sum_{\substack{\beta_1+\beta_2=\beta\\ k_1+k_2=k+1,\; k_1\ge 1\\ l_1+l_2=l \\(\beta_1,k_1,l_1)\ne(0,1,0)
\\ (\beta_2,k_2,l_2)\not\in\{(0,0,0),(0,1,0)\}}}
\hspace{-3em}(-1)^{\iota(\alpha,\gamma;i,I)}
\q_{k_1,l_1}^{\beta_1}(\alpha^{(1:3)}\otimes\q_{k_2,l_2}^{\beta_2} (\alpha^{(2:3)};\gamma^J)\otimes\alpha^{(3:3)};\gamma^I)\Big)\\
=&(-1)^{\varepsilon(\alpha)}\Big((-1)^{|\gamma^{(1:3)}|+1} \qkl^\beta(\alpha;\gamma^{(1:3)},d\gamma_i,\gamma^{(3:3)})+\mbox{}\\
&\quad+\hspace{-1.5em}\sum_{\substack{\beta_1+\beta_2=\beta\\ k_1+k_2=k+1,\; k_1\ge 1\\ l_1+l_2=l}}\hspace{-1.5em}
(-1)^{\iota(\alpha,\gamma;P,I)}
\q_{k_1,l_1}^{\beta_1}(\alpha^{(1:3)}\otimes\q_{k_2,l_2}^{\beta_2}(\alpha^{(2:3)};\gamma^J)\otimes\alpha^{(3:3)};\gamma^I)\Big).
\end{align*}
Dividing by $(-1)^{\varepsilon(\alpha)}$ we get the desired equation.

\end{proof}

\subsection{Proof for \texorpdfstring{$k=-1$}{k= -1}}\label{ssec:q-1}
This section is devoted to the proof of Proposition~\ref{q_-1_rel}. Recall the definition of the projection $\pr$ from~\eqref{eq:varpi}
and recall that $w_{\s} \in H^2(X;\Z/2\Z)$ is the class with $w_2(TL) = i^* w_{\s}$ determined by the relative spin structure $\s$.
\begin{prop}\label{rem:pintgluing} 
Let $l\in\Z_{\ge 0},\,\beta\in\sly$, and $\hat\beta \in H_2(X;\Z)$ with $\pr(\hat\beta) = \beta.$
Let
$
B\subset \d\M_{0,l}(\beta)
$
be the boundary component where a generic point is a sphere of class $\hat\beta$ intersecting $L$ at a marked point. Such spheres arise when the boundary of a disk collapses to a point. Equivalently, one can view this as interior bubbling from a ghost disk component. Note that the ghost disk is not stable.
Then the map
\[
\vartheta:
L\times_X \M_{l+1}(\hat\beta)\stackrel{\sim}{\lrarr}B
\]
satisfies $sgn(\vartheta)=(-1)^{n+1+w_{\s}(\hat\beta)}$.
\end{prop}
\begin{proof}
This is~\cite[Proposition~8.10.6]{FOOO}, but with sign $(-1)^{n+1 + w_\s(\hat\beta)}$ instead of $(-1)^n$. The reason for the sign discrepancy of $+1$ is that in the notation of the proof of~\cite[Proposition~8.10.6]{FOOO}, we should have $\R_{out} = -\R_{>0}.$ The sign is illustrated in Figure~\ref{fig:bbgs} in the case $n = 0$ and $l = 2.$
The reason for the sign discrepancy of $(-1)^{w_\s(\hat \beta)}$ can be seen by following the construction of the orientation associated to a relative spin structure~\cite[Theorem 8.1.1.]{FOOO}.
\end{proof}

\begin{figure}[ht]
\centering
\includegraphics[width=12cm]{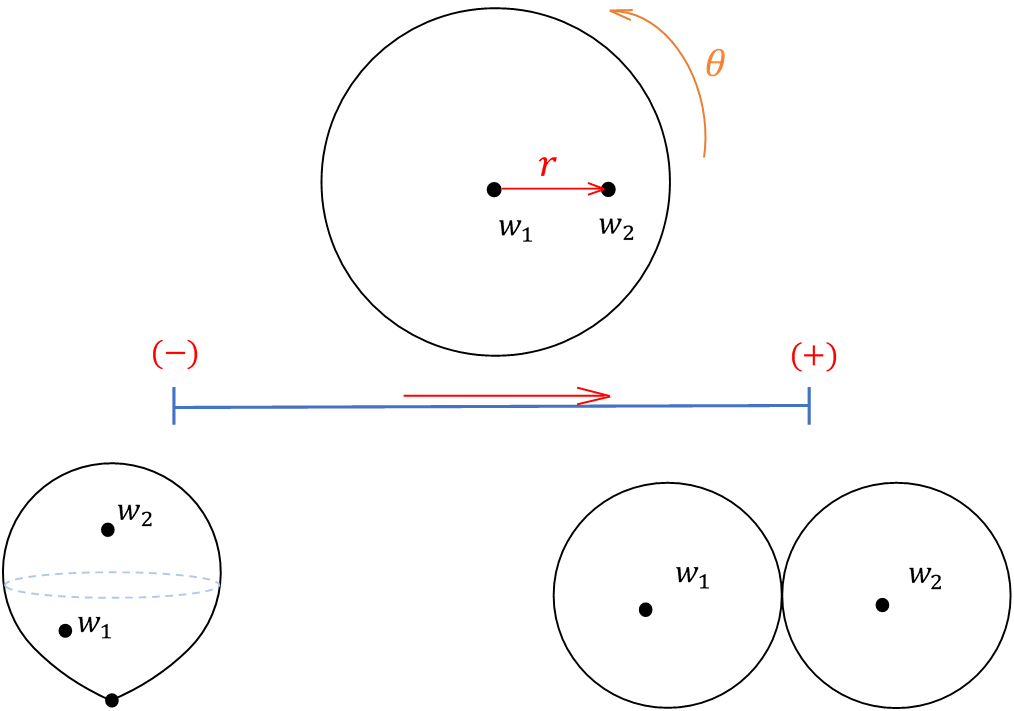}
\caption{$\M_{0,2}$, the moduli space of stable disks with two marked interior points. Here, $X$ and $L$ are a point, so $n = 0.$ Up to reparameterization, we can fix $w_1$ and the $\theta$ coordinate of $w_2$. Then the orientation is given by the positive direction of $r$. The boundary component of a sphere bubble has the sign $(-1)^{n+1}=-1$ in agreement with Proposition~\ref{rem:pintgluing}. The boundary component of two disks joined at a boundary node has the sign $(-1)^n = 1$ in agreement with Proposition~\ref{rem:gluingsign}.}
\label{fig:bbgs}
\end{figure}

Recall that $pt$ is the map from any space to a point, and $evi_j^\beta:\M_{0,l}(\beta)\to X$ is the evaluation map at the $j$th interior marked point.
\begin{lm}\label{lm:starofDtype}
Let $B$ be the boundary component of $\M_{0,l}(\beta)$ described in Proposition~\ref{rem:pintgluing}, and let $\gamma=(\gamma_1,\ldots,\gamma_l)$.
Then
\[
pt_*\big(\bigwedge_{j=1}^l (evi_j^\beta)^*\gamma_j\big)|_B=
(-1)^{n+1}
\int_Li^*\q_{\emptyset,l}^{\hat\beta}(\gamma),
\]
for $\hat\beta$ as in Proposition~\ref{rem:pintgluing}.
\end{lm}

\begin{proof}
Consider the following pullback diagram:
\[
\xymatrix{
{L\times_X\M_{l+1}(\hat\beta)}\ar[r]^{\quad p_2}\ar[d]^{p_1}&
{\M_{l+1}(\hat\beta)}\ar[d]^{ev_0^{\hat \beta}}\\
{L}\ar[r]^{\quad i}&X\;.
}
\]
Recall that $ev_j^{\hat\beta}:\M_{l+1}(\hat\beta)\to X$ is the evaluation map at the $j$th marked point.
Write $\xi:=\bigwedge_{j=1}^l (evi_j^\beta)^*\gamma_j$, and define $\xi', \xi^{\prime\prime},$ by
\[
\xi'=\vartheta^*\xi|_B, \qquad
\xi^{\prime\prime}=\bigwedge_{j=1}^l (ev_j^{\hat\beta})^*\gamma_j,
\]
where $\vartheta$ is the diffeomorphism from Proposition~\ref{rem:pintgluing}.
The result now follows from the fact that
\[
pt_*\xi|_B
=
(-1)^{n+1+w_\s(\hat\beta)}pt_*\xi',
\]
and
\begin{align*}
pt_*\xi'
=
pt_*p_2^*\xi^{\prime\prime}
=
pt_*(p_1)_*p_2^*\xi^{\prime\prime}
=
pt_*i^*(ev_0^{\hat\beta})_*\xi^{\prime\prime}
=
(-1)^{w_\s(\hat\beta)}
\int_Li^*\q_{\emptyset,l}^{\hat\beta}(\gamma).
\end{align*}

\end{proof}

\begin{lm}\label{lm:q-1diskbd}
Let $l\in \Z_{\ge 0},$ $\beta\in \sly$. Let $\beta_1,\beta_2\in\sly$ be such that $\beta_1+\beta_2=\beta$. Let $I\sqcup J=[l]$ be a partition of $[l]$.
Let $B\subset \d \M_{0,l}(\beta)$ be the boundary component
where a generic point is a stable map with two disk components, one carrying the interior marked points labeled by $I$ and the other carrying the points labeled by $J$.
If $\beta_1 \neq \beta_2$ or $I \neq \emptyset$ or $J \neq \emptyset,$ then
\[
\int_{B}\wedge_{j=1}^l evi_j^*\gamma_j
=(-1)^{sgn(\sigma^\gamma_{I\cup J})+|\gamma^J|+n} \langle\q_{0,|I|}^{\beta_1}(\gamma^{I}), \q_{0,|J|}^{\beta_2}(\gamma^{J})\rangle.
\]
If $\beta_1  = \beta_2 = \beta'$ and $I = \emptyset = J,$ then
\[
\int_{B}\wedge_{j=1}^l evi_j^*\gamma_j
= \frac{(-1)^{n}}{2} \langle\q_{0,0}^{\beta'}, \q_{0,0}^{\beta'}\rangle.
\]
\end{lm}
\begin{proof}
First, consider the case $\beta_1 \neq \beta_2$ or $I \neq \emptyset$ or $J \neq \emptyset.$
By Proposition~\ref{rem:gluingsign} applied to the case $i=k_1=k_2=0$, the diffeomorphism
\[
\vartheta:\M_{1,\,I}(\beta_1) \prescript{}{evb_0^{\beta_1}}\times_{evb_0^{\beta_2}}  \M_{1,\,J}(\beta_2)
\stackrel{\sim}{\lrarr} B
\]
has $sgn(\vartheta)=(-1)^{n}$.

Let $evi_j^{\beta_i}$ and $evb_0^{\beta_i}$ for $i=1,2,$ be the evaluation maps of $\M_{1,I}(\beta_1),\M_{1,J}(\beta_2),$ respectively.
Set
\[
\bar\xi:=\vartheta^*\Big(\bigwedge_{j=1}^l evi_j^*\gamma_j\Big),\quad \xi_1:=\bigwedge_{j\in I}(evi_j^{\beta_1})^*\gamma_j, \quad\xi_2:=\bigwedge_{j\in J}(evi_j^{\beta_2})^*\gamma_j.
\]
Similarly to the proof of Lemma~\ref{lm:star}, consider the pull-back diagram
\[
\xymatrix{
{\M_{1,I}(\beta_1)\times_L \M_{1,J}(\beta_2)}\ar[r]^(.6){p_2}\ar[d]^{p_1}&
{\M_{1,J}(\beta_2)}\ar[d]^{evb_0^{\beta_2}}\\
{\M_{1,I}(\beta_1)}\ar[r]^{evb_0^{\beta_1}}&L.
}
\]
By properties~\eqref{prop:pushpull}-\eqref{prop:pushfiberprod} and Lemma~\ref{lm:deg_q}, we compute
\begin{align*}
\int_{\M_{1,I}(\beta_1)\times_L \M_{1,J}(\beta_2)}\bar\xi&=
pt_*(\bar\xi)\\
=&(-1)^{sgn(\sigma^\gamma_{I\cup J})}pt_*{p_1}_*(p_1^*\xi_1 \wedge p_2^*\xi_2)\\
=&(-1)^{sgn(\sigma^\gamma_{I\cup J})}pt_*(\xi_1 \wedge {p_1}_*p_2^*\xi_2)\\
=&(-1)^{sgn(\sigma^\gamma_{I\cup J})}pt_*(\xi_1 \wedge
(evb_0^{\beta_1})^*(evb_0^{\beta_2})_*\xi_2)\\
=&(-1)^{sgn(\sigma^\gamma_{I\cup J})}pt_*(evb_0^{\beta_1})_*(\xi_1 \wedge
(evb_0^{\beta_1})^*(evb_0^{\beta_2})_*\xi_2)\\
=&(-1)^{sgn(\sigma^\gamma_{I\cup J})+|\xi_1||(evb_0^{\beta_2})_*\xi_2|}
pt_*(evb_0^{\beta_1})_*(
(evb_0^{\beta_1})^*(evb_0^{\beta_2})_*\xi_2 \wedge \xi_1)\\
=&(-1)^{sgn(\sigma^\gamma_{I\cup J})+|\xi_1||(evb_0^{\beta_2})_*\xi_2|}
pt_*(
((evb_0^{\beta_2})_*\xi_2 \wedge (evb_0^{\beta_1})_*\xi_1)\\
=&(-1)^{sgn(\sigma^\gamma_{I\cup J})+ |(evb_0^{\beta_2})_*\xi_2|(|\xi_1|+|(evb_0^{\beta_1})_*\xi_1|)}
pt_*
((evb_0^{\beta_1})_*\xi_1\wedge (evb_0^{\beta_2})_*\xi_2)\\
=&(-1)^{sgn(\sigma^\gamma_{I\cup J})+ |(evb_0^{\beta_2})_*\xi_2|(|\xi_1|+|\xi_1|)}
pt_*
((evb_0^{\beta_1})_*\xi_1\wedge (evb_0^{\beta_2})_*\xi_2)\\
=&(-1)^{sgn(\sigma^\gamma_{I\cup J})}
pt_*((evb_0^{\beta_1})_*\xi_1\wedge (evb_0^{\beta_2})_*\xi_2)\\
=&(-1)^{sgn(\sigma^\gamma_{I\cup J})+|\q_{0,l_2}^{\beta_2}(\gamma^{J})| +\varepsilon(\emptyset)+\varepsilon(\emptyset)} \langle\q_{0,l_1}^{\beta_1}(\gamma^{I}), \q_{0,l_2}^{\beta_2}(\gamma^{J})\rangle\\
=&(-1)^{sgn(\sigma^\gamma_{I\cup J})+|\gamma^J|}
\langle\q_{0,l_1}^{\beta_1}(\gamma^{I}), \q_{0,l_2}^{\beta_2}(\gamma^{J})\rangle.
\end{align*}
This proves the lemma in the case $\beta_1 \neq \beta_2$ or $I \neq \emptyset$ or $J \neq \emptyset.$
In the case $\beta_1 = \beta_2 = \beta'$ and $I = \emptyset = J,$ the same argument applies except we must divide the final result by $2$ because the map $\vartheta$ of Proposition~\ref{rem:gluingsign} is $2$ to $1.$
\end{proof}

\begin{proof}[Proof of Proposition~\ref{q_-1_rel}]
Stokes' theorem, Proposition~\ref{stokes}, gives
\begin{equation}\label{eq:stokes-1}
0=\int_{\M_{0,l}(\beta)}d(\wedge_{j=1}^l evi_j^*\gamma_j)+(-1)^{n+1+|\gamma|}\int_{\d\M_{0,l}(\beta)}\wedge_{j=1}^l evi_j^*\gamma_j.
\end{equation}
We have
\begin{equation}\label{eq:dterm-1}
\int_{\M_{0,l}(\beta)}d(\wedge_{j=1}^l evi_j^*\gamma_j)=
\sum_{(2:3)=\{j\}}(-1)^{|\gamma^{(1:3)}|} \q_{-1,l}^\beta(\gamma^{(1:3)}\otimes d\gamma_j\otimes\gamma^{(3:3)}).
\end{equation}
The expression $\int_{\d\M_{0,l}(\beta)}\wedge_{j=1}^l evi_j^*\gamma_j$ consists of two types of contributions.

\textit{First type -- disk bubbling.}
Let $B\subset \d\M_{0,l}(\beta)$ be a boundary component
of the type described in Lemma~\ref{lm:q-1diskbd}. By Lemma~\ref{lm:q-1diskbd}, we have
\begin{equation*}
\int_{B}\wedge_{j=1}^l evi_j^*\gamma_j
=
\begin{cases}
(-1)^{sgn(\sigma^\gamma_{I\cup J})+|\gamma^J|+n} \langle\q_{0,l_1}^{\beta_1}(\gamma^{I}), \q_{0,l_2}^{\beta_2}(\gamma^{J})\rangle, & \beta_1 \neq \beta_2 \text{ or } I \neq \emptyset \text{ or } J \neq \emptyset, \\
\frac{(-1)^{n}}{2} \langle\q_{0,0}^{\beta'}, \q_{0,0}^{\beta'}\rangle, & \beta_1 = \beta_2 = \beta' \text{ and } I = \emptyset = J,
\end{cases}
\end{equation*}
so
\begin{multline}\label{eq:type1bub}
(-1)^{n+1+|\gamma|}\int_{B}\wedge_{j=1}^l evi_j^*\gamma_j
= \\
=
\begin{cases}
(-1)^{sgn(\sigma^\gamma_{I\cup J})+|\gamma^I|+1} \langle\q_{0,l_1}^{\beta_1}(\gamma^{I}), \q_{0,l_2}^{\beta_2}(\gamma^{J})\rangle & \beta_1 \neq \beta_2 \text{ or } I \neq \emptyset \text{ or } J \neq \emptyset, \\
-\frac{1}{2} \langle\q_{0,0}^{\beta'}, \q_{0,0}^{\beta'}\rangle, & \beta_1 = \beta_2 = \beta' \text{ and } I = \emptyset = J.
\end{cases}
\end{multline}

\textit{Second type -- sphere bubbling from a ghost disk.}
Let $B\subset \d\Mt_{0,l}(\beta)$ be a boundary component
of the type described in Proposition~\ref{rem:pintgluing}.
Lemma~\ref{lm:starofDtype} gives
\[
\int_B \wedge_{j=1}^l evi_j^*\gamma_j
=(-1)^{n+1}\int_L i^*\q_{\emptyset,l}^{\hat\beta}(\gamma),
\]
so
\begin{equation}\label{eq:type2bub}
(-1)^{n+1+|\gamma|}\int_B \wedge_{j=1}^l evi_j^*\gamma_j
=(-1)^{|\gamma|}\int_L i^*\q_{\emptyset,l}^{\hat\beta}(\gamma).
\end{equation}

Substituting equations~\eqref{eq:dterm-1},~\eqref{eq:type1bub}, and~\eqref{eq:type2bub} into equation~\eqref{eq:stokes-1} and dividing by $-1$, we get
\begin{align*}
0=&\sum_{(2:3)=\{j\}}(-1)^{|\gamma^{(1:3)}|+1} \q_{-1,l}^\beta(\gamma^{(1:3)}\otimes d\gamma_j\otimes\gamma^{(3:3)})+\\
&+\frac{1}{2}\sum_{\substack{\beta_1+\beta_2=\beta\\I\sqcup J=[l]}}
(-1)^{sgn(\sigma^\gamma_{I\cup J})+|\gamma^I|}
\langle\q_{0,|I|}^{\beta_1}(\gamma^I), \q_{0,|J|}^{\beta_2}(\gamma^J)\rangle
+(-1)^{|\gamma|+1}\sum_{\varpi(\hat\beta)=\beta}\int_L i^* \q_{\emptyset,l}^{\hat\beta}(\gamma),
\end{align*}
with $\varpi$ as in~\eqref{eq:varpi}.
The factor of $1/2$ in the formula arises as follows. Each summand with $\beta_1 \neq \beta_2$ and $I \neq \emptyset \neq J$ appears twice while the corresponding boundary component appears only once. The factor of $1/2$ cancels this discrepancy. The summands with $\beta_1 = \beta_2 = \beta'$ and $I = \emptyset = J$ appear only once, but the contribution of the corresponding boundary component in equation~\eqref{eq:type1bub} comes with a factor of $1/2.$
\end{proof}

\section{Properties}\label{ssec:properties}
\subsection{Linearity}

\begin{cl}\label{lm:qlinear}
The $\q$ operators are multilinear, in the sense that for $a \in R$ we have
\begin{multline*}
\qquad\q_{k,l}^\beta(\a_1,\ldots,\a_{i-1},a\cdot\a_i,\ldots,\a_k;\gamma_1,\ldots,\gamma_l)=\\
		=(-1)^{|a|\cdot\big(i+\sum_{j=1}^{i-1}|\a_j|+\sum_{j=1}^l|\gamma_j|\big)}
		a\cdot\q_{k,l}^\beta(\a_1,\ldots,\a_k;\gamma_1,\ldots,\gamma_l),
\end{multline*}
and for $a \in Q$ we have
\[
\q_{k,l}^\beta(\a_1,\ldots,\a_k; \gamma_1,\ldots,a\cdot\gamma_i,\ldots,\gamma_l) =(-1)^{|a|\cdot\sum_{j=1}^{i-1}|\gamma_j|}
		a\cdot\q_{k,l}^\beta(\a_1,\ldots,\a_k;\gamma_1,\ldots,\gamma_l),
\]
and
\[
\q^\beta_{\emptyset,l}(\gamma_1,\ldots,a\cdot\gamma_i,\ldots,\gamma_l)=
(-1)^{|a|\cdot\sum_{j=1}^{i-1}|\gamma_j|}a\cdot\q^\beta_{\emptyset,l}(\gamma_1,\ldots,\gamma_l).
\]
In addition, the pairing $\langle\;,\,\rangle$ defined by~\eqref{eq:pairing} is $R$-bilinear in the sense of Definition~\ref{dfn:cycunit}\eqref{it:plin}.
\end{cl}
\begin{proof}
For $\q_{1,0}^{\beta_0}=d,$ we have
\[
d(a\a)=(-1)^{|a|}a d\a.
\]
For $(k,l,\beta)\ne (1,0,\beta_0)$, we have
\begin{multline*}
(evb_0)_*(\bigwedge_{j=1}^levi_j^*\gamma_j\wedge\bigwedge_{j=1}^{i-1}evb_j^*\a_j\wedge evb_i^*(a\a_i)\wedge\bigwedge_{j=i+1}^k evb_j^*\a_j)=\\
=
(-1)^{|a|(\sum_{j=1}^l|\gamma_j|+\sum_{j=1}^{i-1}|\a_j|)}
a\cdot(evb_0)_*(\bigwedge_{j=1}^levi_j^*\gamma_j\wedge\bigwedge_{j=1}^{k}evb_j^*\a_j).
\end{multline*}
The corresponding change in $\varepsilon$ is
\[
\varepsilon(\a_1,\ldots,\a_{i-1},a\a_i,\a_{i+1},\ldots ,\a_k)
-\varepsilon(\a_1,\ldots,\a_k)
=i\cdot |a|.
\]
Together, this gives the sign of the first identity.
Similarly, for the second identity,
\begin{multline*}
(evb_0)_*(\bigwedge_{j=1}^{i-1}evi_j^*\gamma_j\wedge evi_i^*(a\gamma_i)\wedge\bigwedge_{j=i+1}^l evi_j^*\gamma_j\wedge\bigwedge_{j=1}^k evb_j^*\a_j)=\\
=
(-1)^{|a|\cdot\sum_{j=1}^{i-1}|\gamma_j|}
a\cdot(evb_0)_*(\bigwedge_{j=1}^levi_j^*\gamma_j\wedge\bigwedge_{j=1}^{k}evb_j^*\a_j),
\end{multline*}
while $\varepsilon$ is not affected. If $k=-1,$ we use $pt$ instead of $evb_0,$ and the sign computation is  valid as before.

The third equation is immediate from definition.

To verify the linearity of the pairing, compute
\begin{gather*}
\langle a\cdot\xi,\eta\rangle=(-1)^{|\eta|}\;a\cdot\int_L\xi\wedge\eta
=a\cdot \langle\xi,\eta\rangle,\\
\langle\xi,a\cdot\eta\rangle=(-1)^{|a|+|\eta|}\int_L\xi\wedge a\cdot\eta
=(-1)^{|a|+|\eta|+|a|\cdot|\xi|}\;a\cdot\int_L\xi\wedge\eta
=(-1)^{|a|(1+|\xi|)}a\cdot\langle\xi,\eta\rangle.
\end{gather*}

\end{proof}

\subsection{Unit of the algebra}
We show that the constant form $1\in A^*(L;R)$ is a unit of the $A_{\infty}$ algebra $(C,\{\mg_k\}_{k\geq0}).$
\begin{cl}\label{cl:unit}
Fix $f\in A^0(L)\otimes R$, $\alpha_1,\ldots,\alpha_{k}\in C,$ and $\gamma_1,\ldots,\gamma_l \in A^*(X;Q).$ Then
\[
\q_{\,k,l}^{\beta}(\alpha_1,\ldots,\alpha_{i-1},f, \alpha_{i},\ldots,\alpha_{k-1} ;\otimes_{r=1}^l\gamma_r)=
\begin{cases}
df, & (k,\;l,\beta)=(1,0,\beta_0),\\
(-1)^{|f|}f\cdot\alpha_1, & (k,l,\beta)=(2,0,\beta_0),\\
&\hspace{5em} i=1,\\
(-1)^{|\alpha_1|(|f|+1)}f\cdot\alpha_1, & (k,l,\beta)=(2,0,\beta_0),\\
&\hspace{5em} i=2,\\
0,& \text{otherwise.}
\end{cases}
\]
In particular, $1\in A^0(L)$ is a strong unit for the $A_\infty$ operations $\mg$:
\[
\m_{k}^\gamma (\alpha_1,\ldots,\alpha_{i-1},1, \alpha_{i},\ldots,\alpha_{k-1})=
\begin{cases}
0, & k\ge 3 \mbox{ or } k=1,\\
\alpha_1, & k=2,\: i=1,\\
(-1)^{|\alpha_1|}\alpha_1, & k=2,\: i=2.
\end{cases}
\]
\end{cl}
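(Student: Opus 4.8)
The plan is to exploit the forgetful map $\pi\colon\M_{k+2,l}(\beta)\to\M_{k+1,l}(\beta)$ that drops the boundary marked point carrying $f$ and stabilizes the resulting domain; note that $f$ always occupies one of the \emph{input} points $z_1,\ldots,z_{k+1}$, never $z_0$, so this really does land in $\M_{k+1,l}(\beta)$. Under our regularity hypotheses this $\pi$ is a proper submersion of relative dimension $1$ whenever it is defined, and it is undefined precisely when dropping the point destabilizes the irreducible constant-map domain, i.e.\ when $2l+(k+1)<3$; running through the possibilities, this happens only for $(k+1,l,\beta)\in\{(1,0,\beta_0),(2,0,\beta_0)\}$. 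I would treat every other triple via $\pi$, and these two exceptional triples by direct inspection.

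For a triple not on the exceptional list, dropping a boundary point and stabilizing changes neither the stable map $u$, nor the interior marked points, nor the other boundary marked points; hence $evb_0^\beta=\overline{evb}_0\circ\pi$, and every evaluation map other than $evb_j^\beta$, the evaluation at the dropped point, factors through $\pi$. Consequently the part of the integrand defining $\q_{k+1,l}^\beta(\ldots,f,\ldots)$ that does \emph{not} involve $f$ is $\pi^*\eta$ for a form $\eta$ on $\M_{k+1,l}(\beta)$, so (using $|f|=0$ to move the scalar factor freely) the integrand is $(evb_j^\beta)^*f\wedge\pi^*\eta$. Pushing forward first along $\pi$ (property~\eqref{prop:pushcomp}) and then invoking the projection formula (property~\eqref{prop:pushpull}) gives
\[
(evb_0^\beta)_*\bigl((evb_j^\beta)^*f\wedge\pi^*\eta\bigr)=(\overline{evb}_0)_*\bigl(\eta\wedge\pi_*((evb_j^\beta)^*f)\bigr)=0,
\]
since $(evb_j^\beta)^*f$ is a $0$-form and $\pi$ has $1$-dimensional fibers, so its push-forward along $\pi$ is a form of degree $-1$ and hence zero. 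Up to the overall sign $(-1)^{\varepsilon}$, which plays no role here, this is exactly $\q_{k+1,l}^\beta(\ldots,f,\ldots)=0$. I want to stress that this vanishing is forced by degree considerations alone and is entirely insensitive to orientation conventions.

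There remain the two exceptional triples. For $(k+1,l,\beta)=(1,0,\beta_0)$ nothing is needed, since $\q_{1,0}^{\beta_0}$ is \emph{defined} to be $d$, whence $\q_{1,0}^{\beta_0}(f)=df$. For $(k+1,l,\beta)=(2,0,\beta_0)$ the moduli space is $\M_{3,0}(\beta_0)$: a degree-$\beta_0$ $J$-holomorphic map from a genus-zero bordered nodal curve is constant, and stability pins the domain to a single disk with its three boundary marked points, so $\M_{3,0}(\beta_0)\cong L$ with $evb_0^{\beta_0}=evb_1^{\beta_0}=evb_2^{\beta_0}$ equal to the identity (matching orientations via the conventions of Section~\ref{ssec:orinetation} and \cite[Chapter~8]{FOOO}). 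Computing $\varepsilon$ in this case and using $|f|=0$ to commute $f$ past the other input then yields $\q_{2,0}^{\beta_0}(f,\alpha_2)=f\cdot\alpha_2$ and $\q_{2,0}^{\beta_0}(\alpha_1,f)=(-1)^{|\alpha_1|}f\cdot\alpha_1$, which are the two middle cases.

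The ``in particular'' statement then follows by expanding
$\m_{k+1}^\gamma(\ldots,1,\ldots)=\sum_{l\ge0}\tfrac{1}{l!}\sum_\beta T^\beta\,\q_{k+1,l}^\beta(\ldots,1,\ldots;\gamma^{\otimes l})$,
a convergent sum by the hypothesis on $\gamma$: by the first part only the $(l,\beta)=(0,\beta_0)$ term can survive, namely $\q_{k+1,0}^{\beta_0}(\ldots,1,\ldots)$, which is $d1=0$ for $k=0$, is $0$ for $k\ge2$, and equals $\alpha_2$ (for $k=1$, $i=1$) or $(-1)^{|\alpha_1|}\alpha_1$ (for $k=1$, $i=2$). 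The only genuinely delicate step in this plan is the opening claim, that $\pi$ is a proper submersion of relative dimension $1$ compatible with all the evaluation maps: one must control, inside the orbifold-with-corners framework, the contraction of a component as the dropped point runs into a node along a boundary stratum of $\M_{k+2,l}(\beta)$. Granting that, everything else is formal — the generic vanishing is a matter of degree, and the sole nontrivial sign computation is the identification $\M_{3,0}(\beta_0)\cong L$.
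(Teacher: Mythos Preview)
Your approach is essentially the same as the paper's: handle $(1,0,\beta_0)$ by definition, kill the generic case via the forgetful map $\pi$ and a degree count, and compute $(2,0,\beta_0)$ by hand using $evb_0=evb_1=evb_2$ and $\M_{3,0}(\beta_0)\cong L$.

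The one point where you diverge is your claim that $\pi$ is a proper submersion. The paper explicitly notes that $\pi$ \emph{need not} be a submersion---the fiber over a nodal curve can acquire a boundary node, and stabilization contracts a component when the forgotten point sits on a disk with only one other special point. Rather than verifying submersivity in the orbifold-with-corners sense (which you yourself flag as the delicate step), the paper simply interprets $\pi_*$ as a push-forward of currents: since $\dim\M_{k+2,l}(\beta)>\dim\M_{k+1,l}(\beta)$ and $|g|=0$, the current $\pi_*g$ has negative cohomological degree and therefore vanishes. The projection formula and composition of push-forwards used in equation~\eqref{eq:unit} hold for currents, so nothing else in your argument changes. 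In short, your degree argument is exactly right; you just do not need---and should not assert---that $\pi$ is a submersion to run it.
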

\begin{proof}
The case $(k,l,\beta)=(1,0,\beta_0)$ is true by definition.
We proceed with the proof for other values of $(k,l,\beta)$.

Let $\pi:\M_{k+1,l}(\beta)\to  \M_{k,l}(\beta)$ be the map that forgets the $i$th marked boundary point, shifts the labels of the following boundary points, and stabilizes the resulting map. Thus, the map $\pi$ is defined only when stabilization is possible, that is, when $(k,l,\beta)\neq (2,0,\beta_0)$. Denote by
$evb_j^{k+1}$ and $evi_j^{k+1}$ (resp. $evb_j^{k}$ and $evi_j^{k}$) the evaluation maps for $\M_{k+1,l}(\beta)$ (resp. $\M_{k,l}(\beta)$). Set
\[
\xi:= \bigwedge_{j=1}^l (evi_j^{k})^*\gamma_j\wedge \bigwedge_{j=1}^{k-1}(evb_j^{k})^*\alpha_j.
\]
Note that
\[
\quad evi_j^{k+1}=evi_j^{k}\circ \pi,\quad
evb_j^{k+1}=
\begin{cases}
evb_j^{k}\circ \pi, &j<i,\\
evb_{j-1}^{k}\circ\pi, &j>i.
\end{cases}
\]
Thus, writing $g:=(evb_i^{k+1})^*f,$ we have
\begin{equation*}
\pm\q_{\,k,l}^\beta (\alpha_1,\ldots,\alpha_{i-1},f,\alpha_i,\ldots,\alpha_{k-1}; \otimes_{r=1}^l\gamma_r)=(evb_0^{k+1})_*(\pi^*\xi\wedge g)
\end{equation*}
whenever $\pi$ is defined. Using the map $\varphi$ from forms to currents given in Section~\ref{sssec:currents} together with the analog of the integration properties of Proposition~\ref{prop:proppp} for currents given in~\cite[Proposition 6.1]{ST4}, we obtain
\begin{multline}
\varphi\left((evb_0^{k+1})_*(\pi^*\xi\wedge g)\right)  = \\ =(evb_0^{k+1})_*(\pi^*\xi\wedge \varphi(g)) =(evb_0^{k})_*(\pi_*(\pi^*\xi\wedge \varphi(g)))
=(evb_0^{k})_*(\xi\wedge\pi_*\varphi(g)) \label{eq:unit}.
\end{multline}
Since $\dim \M_{k+1,l}(\beta)>\dim \M_{k,l}(\beta)$ and $g$ has degree zero, it follows that $\pi_*\varphi(g) = 0$, and the right-hand side of equation~\eqref{eq:unit} vanishes. Since $\varphi$ is injective, the desired vanishing result for $\q_{k,l}^\beta$ follows. The reason for using currents in this proof is that $\pi$ need not be a submersion, so the push-forward $\pi_*$ is not defined on differential forms.

Let us see what happens when $(k,l,\beta)=(2,0,\beta_0).$ In that case, the evaluation maps on $\M_{3,0}(\beta_0)$ satisfy $evb_0=evb_1=evb_2.$ So,
\begin{gather*}
\q_{2,0}^{\beta_0}(f,\alpha)
=(-1)^{|f|+1+2(|\alpha|+1)+1}(evb_0)_*evb_0^*(f\wedge\alpha),\\
\q_{2,0}^{\beta_0}(\alpha, f)
=(-1)^{|\alpha|+1+2\cdot (|f|+1)+1}(evb_0)_*evb_0^*(\alpha\wedge f).
\end{gather*}
Denote by $\M_{k+1,l}$ the moduli space of stable disks, that is, genus zero open stable maps to a point, with $k+1$ boundary marked points and $l$ interior marked points.
Since $\beta=\beta_0,$ the evaluation map $evb_0$ induces an identification of $\M_{k+1,l}(\beta_0)$ with $\M_{k+1,l}\times L$. Since $k+1 = 3$ and $l = 0,$ the space of stable disks is a point. Hence, $evb_0$ identifies $\M_{3,0}(\beta_0)$ diffeomorphically with $L$. This diffeomorphism preserves orientation by the argument on page~714 of~\cite{FOOO} based on their Convention~8.3.1. Thus,
\[
\q_{2,0}^{\beta_0}(f,\alpha)
=(-1)^{|f|}f\alpha
\quad\text{and}\quad
\q_{2,0}^{\beta_0}(\alpha, f)
=(-1)^{|\alpha|+|\a||f|}f\alpha.
\]
\end{proof}

\subsection{Cyclic structure}
Recall the definition of the pairing~\eqref{eq:pairing}.
Note that
\begin{equation}\label{eq:psgn}
\langle\xi,\eta\rangle:=(-1)^{|\eta|}\int_L\xi\wedge\eta
=(-1)^{|\eta|+|\eta|\cdot|\xi|}\int_L\eta\wedge\xi
=(-1)^{(|\eta|+1)(|\xi|+1)+1}\langle\eta,\xi\rangle.
\end{equation}

\begin{cl}\label{cl:cyclic}
For any $\alpha_1,\ldots,\alpha_{k+1}\in C$ and $\gamma_1,\ldots,\gamma_l\in A^*(X;Q)$,
\begin{align*}
\langle\qkl(\alpha_1,\ldots&,\alpha_k;\gamma_1,\ldots\gamma_l),\alpha_{k+1}\rangle=\\
&(-1)^{(|\alpha_{k+1}|+1)\sum_{j=1}^{k}(|\alpha_j|+1)}\cdot
\langle \qkl(\alpha_{k+1},\alpha_1,\ldots,\alpha_{k-1};\gamma_1,\ldots,\gamma_l),\alpha_k\rangle.
\end{align*}
In particular, $(C,\{\m_k^\gamma\}_{k\ge 0})$ is a cyclic $A_\infty$ algebra for any $\gamma$.
\end{cl}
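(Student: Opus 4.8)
The plan is to prove the cyclic symmetry of $\qkl$ by a geometric argument on the moduli space $\M_{k+1,l}(\beta)$, exploiting the cyclic symmetry of the boundary marked points. The key observation is that relabeling the boundary marked points by the cyclic rotation $z_0\mapsto z_1\mapsto\cdots\mapsto z_k\mapsto z_0$ is a diffeomorphism $c:\M_{k+1,l}(\beta)\to\M_{k+1,l}(\beta)$ which intertwines the evaluation maps, $evb_j\circ c=evb_{j+1}$ (indices mod $k+1$) and $evi_j\circ c=evi_j$. Thus I would first record the effect of $c$ on the orientation of $\M_{k+1,l}(\beta)$; as in \cite[Chapter 8]{FOOO} this cyclic relabeling changes orientation by an explicit sign depending on $n$, $k$, and $\mu(\beta)$, which by Lemma~\ref{lm:deg_q} can be expressed modulo $2$ in terms of $k$ alone together with the degrees of the forms.

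Next I would compute directly. By definition of $\langle\,,\,\rangle$ and of $\qkl^\beta$, the left-hand side $\langle\qkl^\beta(\alpha_1,\ldots,\alpha_k;\gamma),\alpha_{k+1}\rangle$ is, up to the sign $(-1)^{\varepsilon(\alpha;\gamma)+|\alpha_{k+1}|}$, the integral over $L$ of $(evb_0^\beta)_*\big(\bigwedge_j(evi_j^\beta)^*\gamma_j\wedge\bigwedge_j(evb_j^\beta)^*\alpha_j\big)\wedge\alpha_{k+1}$. Using property~\eqref{prop:pushpull} to pull $\alpha_{k+1}$ inside the push-forward as $(evb_0^\beta)^*\alpha_{k+1}$, and then property~\eqref{normalization} (the case $f:\M_{k+1,l}(\beta)\to pt$), this becomes an integral over the whole moduli space of $\bigwedge_j(evi_j^\beta)^*\gamma_j\wedge\bigwedge_{j=1}^k(evb_j^\beta)^*\alpha_j\wedge(evb_0^\beta)^*\alpha_{k+1}$, up to sign. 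The right-hand side admits the analogous expression with the roles of the boundary points permuted cyclically. Pulling back the right-hand integrand along $c$ and using $evb_j\circ c=evb_{j+1}$, $evi_j\circ c=evi_j$, together with the orientation sign of $c$, turns it into the left-hand integrand up to a computable sign, which reduces the claim to a sign bookkeeping identity.

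I would then verify that all the accumulated signs — the Koszul signs from commuting the forms $(evb_0^\beta)^*\alpha_{k+1}$ and the various $(evb_j^\beta)^*\alpha_j$ past one another to match the two orderings, the $\varepsilon(\alpha;\gamma)$ terms on each side, the $(-1)^{|\eta|}$ in the pairing, and the orientation sign of $c$ — combine to exactly $(-1)^{(|\alpha_{k+1}|+1)\sum_{j=1}^k(|\alpha_j|+1)}$. The main obstacle is precisely this sign computation: one must correctly determine how the cyclic relabeling affects the moduli-space orientation (tracking the factor coming from moving the boundary node past $n$-dimensional and $\mu(\beta)$-dimensional pieces), and then match it against the Koszul signs and the definition of $\varepsilon$. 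The parity statement in Lemma~\ref{lm:deg_q}, asserting $\dim(\fiber(evb_0^\beta))\equiv k\pmod 2$, is what makes the $\beta$-dependence drop out, so that the final sign depends only on the degrees of the $\alpha_j$, as required. Finally, summing over $\beta$ with the weights $T^\beta$ and then over $l$ with the weights $1/l!$ to pass from $\qkl$ to $\m_k^\gamma$ gives the cyclic $A_\infty$ property, i.e.\ part~\eqref{it:cyclic} of the definition with $k\neq 1$; the $k=1$ correction term $d\langle\alpha_1,\alpha_2\rangle$ and properties~\eqref{it:symm},~\eqref{it:unit1},~\eqref{it:unit2} are handled separately (symmetry is already noted in~\eqref{eq:psgn}, and the unit properties follow from Proposition~\ref{cl:unit}).
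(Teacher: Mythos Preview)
Your approach is essentially the same as the paper's: both express $\langle\qkl^\beta(\alpha;\gamma),\alpha_{k+1}\rangle$ as an integral over $\M_{k+1,l}(\beta)$ via property~\eqref{prop:pushpull}, then exploit the cyclic symmetry of the boundary markings to relate it to the right-hand side, reducing everything to a sign computation. The paper phrases the geometric step via the identity $pt\circ evb_0 = pt\circ evb_k$ followed by a relabeling at the very end, rather than naming a diffeomorphism $c$ up front, but this is a cosmetic difference.

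Two small corrections. In the paper's conventions the cyclic relabeling contributes only $(-1)^k$; Lemma~\ref{lm:deg_q} enters not there but in computing the Koszul sign when commuting $\alpha_{k+1}$ past $(evb_0)_*\xi$, since $|(evb_0)_*\xi|\equiv\sum_j|\alpha_j|+|\gamma|+k\pmod 2$. Also, there is no $d\langle\alpha_1,\alpha_2\rangle$ correction in this proposition: $R$ carries the trivial differential, so that term vanishes (it only matters for the pseudo-isotopy pairing $\ll\,,\gg$). What must be checked separately is that $d=\q_{1,0}^{\beta_0}$ itself satisfies the cyclic identity with the stated sign, which is an elementary integration by parts done at the end of the paper's proof. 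The unital properties are not part of this statement.
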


\begin{proof}
For $(k,l,\beta)\ne (1,0,\beta_0),$ use Lemma~\ref{lm:deg_q} to compute
\begin{align}\label{eq:cs1}
\langle\qkl^\beta&(\alpha_1,\ldots,\alpha_k;\gamma_1,\ldots\gamma_l),\alpha_{k+1}\rangle=\notag\\
=&(-1)^{|\alpha_{k+1}|}pt_*(\qkl^\beta(\alpha_1,\ldots,\alpha_k;\gamma_1,\ldots\gamma_l)\wedge\alpha_{k+1})\notag\\
=&(-1)^{|\alpha_{k+1}|+\varepsilon(\alpha)}pt_*((evb_0)_*(\bigwedge_{j=1}^levi_j^*\gamma_j\wedge\bigwedge_{j=1}^k evb_j^*\alpha_j)\wedge\alpha_{k+1})\notag\\
=&(-1)^{|\alpha_{k+1}|+\varepsilon(\alpha)+|\alpha_{k+1}|\cdot\left(\sum_{j=1}^k|\alpha_j|+|\gamma|+k\right)}
pt_*(\alpha_{k+1}\wedge(evb_0)_*(\bigwedge_{j=1}^levi_j^*\gamma_j\wedge \bigwedge_{j=1}^k evb_j^*\alpha_j))\notag\\
=&(-1)^{|\alpha_{k+1}|+\varepsilon(\alpha)+|\alpha_{k+1}| \cdot\left(\sum_{j=1}^k|\alpha_j|+|\gamma|+k\right)}
pt_*(evb_0)_*(evb_0^*\alpha_{k+1}\wedge\bigwedge_{j=1}^levi_j^*\gamma_j\wedge\bigwedge_{j=1}^k evb_j^*\alpha_j)\notag\\
=&(-1)^{|\alpha_{k+1}|+\varepsilon(\alpha)+|\alpha_{k+1}| \cdot\left(\sum_{j=1}^k|\alpha_j|+|\gamma|+k\right)+|\alpha_k| \cdot\left(|\alpha_{k+1}|+|\gamma|+\sum_{j=1}^{k-1}|\alpha_j|\right) +|\alpha_{k+1}||\gamma|}\;\cdot\notag\\
&\quad\cdot(pt\circ evb_0)_*(evb_k^*\alpha_k\wedge \bigwedge_{j=1}^levi_j^*\gamma_j\wedge evb_0^*\alpha_{k+1}\wedge\bigwedge_{j=1}^{k-1} evb_j^*\alpha_j)\notag\\
=&(-1)^{|\alpha_{k+1}|+\varepsilon(\alpha)+|\alpha_{k+1}| \cdot\left(\sum_{j=1}^k|\alpha_j|+k\right)+|\alpha_k| \cdot\left(|\alpha_{k+1}|+|\gamma|+\sum_{j=1}^{k-1}|\alpha_j|\right)} \;\cdot\notag\\
&\quad\cdot(pt\circ evb_k)_*(evb_k^*\alpha_k\wedge \bigwedge_{j=1}^levi_j^*\gamma_j\wedge evb_0^*\alpha_{k+1}\wedge\bigwedge_{j=1}^{k-1} evb_j^*\alpha_j)\notag\\
=&(-1)^{|\alpha_{k+1}|+\varepsilon(\alpha)+|\alpha_{k+1}| \cdot\left(\sum_{j=1}^k|\alpha_j|+k\right)+|\alpha_k| \cdot\left(|\alpha_{k+1}|+|\gamma|+\sum_{j=1}^{k-1}|\alpha_j|\right)} \;\cdot\notag\\
&\quad\cdot pt_*(\alpha_k\wedge {evb_k}_*(\bigwedge_{j=1}^levi_j^*\gamma_j\wedge evb_0^*\alpha_{k+1}\wedge\bigwedge_{j=1}^{k-1} evb_j^*\alpha_j))\notag\\
=&(-1)^{|\alpha_{k+1}|+\varepsilon(\alpha)+|\alpha_{k+1}| \cdot\left(\sum_{j=1}^k|\alpha_j|+k\right)+|\alpha_k| \cdot\left(|\alpha_{k+1}|+|\gamma| +\sum_{j=1}^{k-1}|\alpha_j|\right)+|\alpha_k| \cdot\left(|\alpha_{k+1}|+|\gamma|+\sum_{j=1}^{k-1}|\alpha_j|+k\right)} \;\cdot\notag\\
&\quad\cdot pt_*({evb_k}_*( \bigwedge_{j=1}^levi_j^*\gamma_j \wedge evb_0^*\alpha_{k+1}\wedge\bigwedge_{j=1}^{k-1} evb_j^*\alpha_j)\wedge\alpha_k)\notag\\
=&(-1)^{|\alpha_{k+1}|+\varepsilon(\alpha)+|\alpha_{k+1}| \cdot\left(\sum_{j=1}^k|\alpha_j|+k\right)+k\cdot|\alpha_k|} \;\cdot\notag\\
&\quad\cdot pt_*({evb_k}_*(\bigwedge_{j=1}^levi_j^*\gamma_j \wedge evb_0^*\alpha_{k+1}\wedge\bigwedge_{j=1}^{k-1} evb_j^*\alpha_j)\wedge\alpha_k).
\end{align}
Let $\varphi : \M_{k+1,l}(\beta) \to \M_{k+1,l}(\beta)$ be given by
\[
\varphi(\Sigma,u,(z_0,\ldots,z_{k}),\vec w) = (\Sigma,u,(z_1,\ldots,z_k,z_0),\vec w).
\]
So,
\[
evi_j \circ \varphi = evi_j, \qquad evb_k \circ \varphi = evb_0, \qquad evb_j \circ \varphi = evb_{j+1}, \quad j = 0,\ldots,k-1,
\]
and $sgn(\varphi) = k.$ Thus, property~\eqref{prop:pushpull} of integration gives
\begin{align}\label{eq:cs2}
pt_*({evb_k}_*&(\bigwedge_{j=1}^levi_j^*\gamma_j \wedge evb_0^*\alpha_{k+1}\wedge\bigwedge_{j=1}^{k-1} evb_j^*\alpha_j)\wedge\alpha_k) = \notag\\
=&
(-1)^k pt_*({evb_k}_*\varphi_*\varphi^*(\bigwedge_{j=1}^levi_j^*\gamma_j \wedge evb_0^*\alpha_{k+1}\wedge\bigwedge_{j=1}^{k-1} evb_j^*\alpha_j)\wedge\alpha_k) \notag\\
=&
(-1)^k pt_*({evb_0}_*(\bigwedge_{j=1}^levi_j^*\gamma_j \wedge evb_1^*\alpha_{k+1}\wedge\bigwedge_{j=1}^{k-1} evb_{j+1}^*\alpha_j)\wedge\alpha_k)\notag\\
=&
(-1)^{\varepsilon(\a_{k+1},\a_1,\ldots,\a_{k-1}) + k + |\alpha_k|}
\langle\qkl^\beta(\alpha_{k+1},\alpha_1,\ldots,\alpha_{k-1}; \gamma_1,\ldots,\gamma_l), \alpha_k\rangle.
\end{align}
Combining~\eqref{eq:cs1} and~\eqref{eq:cs2}, we obtain
\[
\langle\qkl^\beta(\alpha_1,\ldots,\alpha_k;\gamma_1,\ldots\gamma_l),\alpha_{k+1}\rangle=(-1)^*
\langle\qkl^\beta(\alpha_{k+1},\alpha_1,\ldots,\alpha_{k-1}; \gamma_1,\ldots,\gamma_l), \alpha_k\rangle,
\]
where
\begin{align*}
*
=&|\alpha_{k+1}|+\sum_{j=1}^{k}j(|\alpha_j| + 1)+1 + |\alpha_{k+1}| \cdot\Big(\sum_{j=1}^k|\alpha_j|+k\Big) +k\cdot|\alpha_k| +\\
&+1\cdot(|\alpha_{k+1}|+1)+ \sum_{j=1}^{k-1}(j+1)(|\alpha_j|+1)+1+k+|\alpha_k|\\
=&\sum_{j=1}^{k-1}(|\alpha_j|+1)+ k(|\alpha_k|+1)+ |\alpha_{k+1}|\cdot\Big(\sum_{j=1}^k|\alpha_j|+k\Big) +k\cdot|\alpha_k|+1+k+|\alpha_k|\\
=&\sum_{j=1}^{k-1}(|\alpha_j|+1) +|\alpha_{k+1}|\cdot\sum_{j=1}^k(|\alpha_j|+1)+1+|\alpha_k|\\
=&(|\alpha_{k+1}|+1)\sum_{j=1}^{k}(|\alpha_j|+1).
\end{align*}

It remains to verify that $d$ is also cyclic. Indeed,
\begin{align*}
\langle d\alpha_1,\alpha_2\rangle=&
(-1)^{|\alpha_2|}\int_L d\alpha_1\wedge\alpha_2
=\int_L \left((-1)^{|\alpha_2|}d(\alpha_1\wedge\alpha_2)+(-1)^{|\alpha_2|+|\alpha_1|+1}\alpha_1\wedge d\alpha_2\right)\\
=&(-1)^{|\alpha_2|+|\alpha_1|+1+|\alpha_1|(|\alpha_2|+1)}\int_L d\alpha_2\wedge\alpha_1
=(-1)^{|\alpha_2|+1+|\alpha_1|(|\alpha_2|+1)}\langle d\alpha_2,\alpha_1\rangle\\
=&(-1)^{(|\alpha_1|+1)(|\alpha_2|+1)}\langle d\alpha_2,\alpha_1\rangle.
\end{align*}

\end{proof}

\begin{rem}
Intuitively, pairing $\qkl$ with $\alpha_{k+1}$ should be viewed as putting the constraint $\alpha_{k+1}$ on $z_0$. The cyclic property then translates to a symmetry under cyclic relabeling of the boundary marked points.
\end{rem}

\subsection{Degree of structure maps}
\begin{cl}\label{deg_str_map}
For $k\ge 0$ and $\gamma_1,\ldots,\gamma_l\in A^*(X;Q)$  with $|\gamma_j|=2$,
the map
\[
\qkl(\; ;\gamma_1,\ldots,\gamma_l):C^{\otimes k}\lrarr C
\]
is of degree $2-k$.
\end{cl}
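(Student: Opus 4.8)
The plan is to prove the sharper statement that the full operation $\qkl$ is homogeneous of degree $2-k-2l$ as a map $C^{\otimes k}\otimes A^*(X;Q)^{\otimes l}\to C$; feeding in $l$ arguments $\gamma_1,\ldots,\gamma_l$ of degree $2$ then contributes $2l$ and yields the claim. As a first step I would record the dimension count from the proof of Lemma~\ref{lm:deg_q}: since $evb_0^\beta:\M_{k+1,l}(\beta)\to L$ is a proper submersion,
\[
s_\beta:=\dim\big(\fiber(evb_0^\beta)\big)=\dim\M_{k+1,l}(\beta)-n=\big(n-3+\mu(\beta)+(k+1)+2l\big)-n=\mu(\beta)+k+2l-2 .
\]

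Next I would track the total degree, meaning the form degree together with the $R$-degree, through the building blocks of $\q^\beta_{k,l}$. The pullbacks $(evb_j^\beta)^*$ and $(evi_j^\beta)^*$ preserve degree and $\wedge$ is additive, so
\[
\alpha_1\otimes\cdots\otimes\alpha_k\otimes\gamma_1\otimes\cdots\otimes\gamma_l\ \longmapsto\ \bigwedge_{j=1}^l(evi_j^\beta)^*\gamma_j\wedge\bigwedge_{j=1}^k(evb_j^\beta)^*\alpha_j
\]
is a degree-$0$ map into $A^*(\M_{k+1,l}(\beta))\otimes R$; fiber integration $(evb_0^\beta)_*$ lowers degree by $s_\beta$, and the sign $(-1)^{\varepsilon(\alpha;\gamma)}$ is irrelevant to degree. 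Hence $\q^\beta_{k,l}$ is homogeneous of degree $-s_\beta$, and since $T^\beta$ has degree $\mu(\beta)$, the summand $T^\beta\q^\beta_{k,l}$ is homogeneous of degree $\mu(\beta)-s_\beta=2-k-2l$, independently of $\beta$. Therefore $\qkl=\sum_{\beta\in\sly}T^\beta\q^\beta_{k,l}$ is homogeneous of degree $2-k-2l$.

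Finally I would check consistency of the exceptional summands: $\q^{\beta_0}_{1,0}=d$ has degree $1=2-1-0$, the vanishing operators impose no constraint, and the convention $\q^\beta_{0,0}=-(evb_0^\beta)_*1$ has degree $\mu(\beta)-(\mu(\beta)-2)=2$, consistent with $k=l=0$. Substituting $\deg\gamma_j=2$ for $j=1,\ldots,l$ then gives that $\qkl(\,\cdot\,;\gamma_1,\ldots,\gamma_l):C^{\otimes k}\to C$ has degree $(2-k-2l)+2l=2-k$. The computation is pure bookkeeping; its only point of substance is that the form-degree drop $s_\beta$ from fiber integration cancels the $\L$-degree $\mu(\beta)$ of $T^\beta$ in the $\beta$-dependence, so there is no serious obstacle — one must only be careful to use the grading on $C$ that simultaneously records the form degree and the $R$-degree.
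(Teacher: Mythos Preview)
Your proof is correct and follows essentially the same route as the paper: both compute the fiber dimension $s_\beta=\mu(\beta)+k+2l-2$, observe that $(evb_0^\beta)_*$ drops degree by $s_\beta$ while $T^\beta$ raises it by $\mu(\beta)$, and check the exceptional summand $\q_{1,0}^{\beta_0}=d$ separately. The only packaging difference is that the paper inserts the constraint $\deg\gamma_j=2$ at the outset, whereas you first establish the sharper homogeneity of degree $2-k-2l$ and then specialize; this is not a substantive divergence.
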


\begin{proof}
It is enough to check that, for any $\beta$, the map
\[
T^{\beta}\qkl^\beta(\; ;\gamma_1,\ldots,\gamma_l):C^{\otimes k}\lrarr C
\]
is of degree $2-k$. Indeed,
\begin{align*}
|T^{\beta}\qkl^{\beta}(\alpha_1,\ldots,\alpha_k;\gamma_1,\ldots,\gamma_l)|
=& \mu(\beta)+\sum_{j=1}^k|\alpha_j|+2l-\rdim(evb_0)\\
=& \mu(\beta)+\sum_{j=1}^k|\alpha_j|+2l-(n-3+\mu(\beta)+k+1+2l-n)\\
=& \sum_{j=1}^k|\alpha_j|+2-k.
\end{align*}
The special case $\q_{1,0}^{\beta_0}=d$ also aligns with the above formula, being of degree $1=2-1$.

\end{proof}

\subsection{Symmetry}
\begin{cl}\label{cl:symmetry}
Let $k\ge -1$. For any permutation $\sigma\in S_l,$
\[
\qkl(\alpha_1,\ldots,\alpha_k;\gamma_1,\ldots,\gamma_l)=
(-1)^{s_\sigma(\gamma)}\qkl(\alpha_1,\ldots,\alpha_k;\gamma_{\sigma(1)},\ldots,\gamma_{\sigma(l)}),
\]
where
\begin{equation}\label{eq:sgnsigmagamma}
s_\sigma(\gamma):=
\sum_{\substack{i<j\\ \sigma^{-1}(i)>\sigma^{-1}(j)}}|\gamma_i|\cdot|\gamma_j|
=
\sum_{\substack{i>j\\ \sigma(i)<\sigma(j)}}|\gamma_{\sigma(i)}|\cdot|\gamma_{\sigma(j)}|\pmod 2.
\end{equation}
\end{cl}
\begin{proof}
First note that $\varepsilon(\alpha)$ is independent from $\gamma$ and thus is not influenced by applying $\sigma$ to $\gamma$. Besides, changing the labeling of interior marked points does not affect the orientation of the moduli space. So, for $k\ge 0,$
\begin{align*}
\qkl^\beta(\alpha_1,\ldots,\alpha_k;&\gamma_1,\ldots,\gamma_l)=
(-1)^{\varepsilon(\alpha)}
(evb_0^\beta)_* \left(\bigwedge_{j=1}^l(evi_j^\beta)^*\gamma_j\wedge\bigwedge_{j=1}^k (evb_j^\beta)^*\alpha_j\right)\\
=&(-1)^{\varepsilon(\alpha)+s_\sigma(\gamma)}
(evb_0^\beta)_* \left(\bigwedge_{j=1}^l(evi_{\sigma(j)}^\beta)^*\gamma_{\sigma(j)}\wedge\bigwedge_{j=1}^k (evb_j^\beta)^*\alpha_j\right)\\
=&(-1)^{s_\sigma(\gamma)} \qkl^\beta(\alpha_1,\ldots,\alpha_k;\gamma_{\sigma(1)},\ldots,\gamma_{\sigma(l)}).
\end{align*}
The case $k=-1$ is similar, with $pt$ instead of $evb_0^\beta$ and without $\varepsilon(\a)$.

\end{proof}

\subsection{Fundamental class}
\begin{cl}\label{q_fund}
For $k\ge 0,$
\[
\qkl^\beta(\alpha_1,\ldots,\alpha_k;1,\gamma_1,\ldots,\gamma_{l-1})=
\begin{cases}
-1, & (k,l,\beta)=(0,1,\beta_0),\\
0, & \text{otherwise}.
\end{cases}
\]
Furthermore,
\[
\q_{-1,l}^\beta(1,\gamma_1,\ldots,\gamma_{l-1})=0.
\]
\end{cl}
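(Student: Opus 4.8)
The plan is to exploit the forgetful map that drops the interior marked point carrying the constant input $1$, much as the proof of Proposition~\ref{cl:unit} uses the forgetful map on boundary points. Concretely, for $k\ge 0$ let $\pi\colon\M_{k+1,l}(\beta)\to\M_{k+1,l-1}(\beta)$ be the map forgetting the first interior marked point and stabilizing; it is defined precisely when $(k,l,\beta)\ne(0,1,\beta_0)$, which is exactly the case we must show vanishes. When $\pi$ is defined, write $\xi:=\bigwedge_{j=1}^{k}(evb_j^{k+1,l})^*\alpha_j\wedge\bigwedge_{j=2}^{l}(evi_j^{k+1,l})^*\gamma_j$, pulled back along $\pi$ from the corresponding form on $\M_{k+1,l-1}(\beta)$, and observe that $evi_1^{k+1,l}{}^*1 = 1$ is the constant function. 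Since $evb_0^{k+1,l}=evb_0^{k+1,l-1}\circ\pi$, property~\eqref{prop:pushcomp} gives, up to the sign $(-1)^{\varepsilon}$,
\[
\qkl^\beta(\alpha_1,\ldots,\alpha_k;1,\gamma_1,\ldots,\gamma_{l-1})
=\pm(evb_0^{k+1,l-1})_*\bigl(\pi_*\pi^*\xi\bigr).
\]
Because $\pi$ drops a complex (interior) marked point, $\dim\M_{k+1,l}(\beta)=\dim\M_{k+1,l-1}(\beta)+2$, so the fiber of $\pi$ has real dimension $2$; hence $\pi_*\pi^*\xi$ has degree $|\xi|-2$. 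But $\pi^*\xi$ is pulled back from the base, so $\pi_*\pi^*\xi=(\pi_*1)\wedge\xi$ (projection formula for the possibly-non-submersive push-forward, as in the proof of Proposition~\ref{cl:unit}), and $\pi_*1$ has degree $-2<0$, so it vanishes as a current. Therefore the whole expression is zero whenever $\pi$ is defined.

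It remains to treat the exceptional case $(k,l,\beta)=(0,1,\beta_0)$. Here $\M_{1,1}(\beta_0)$ is the moduli of constant disks with one boundary and one interior marked point; the evaluation map $evb_0$ identifies it with $L$ (the domain being a single disk with the marked points in fixed position up to the automorphism group, which is now rigid), and the input form is $evi_1^*1=1$. So $\q_{0,1}^{\beta_0}(;1)=-(evb_0^{\beta_0})_*(1\wedge(evi_1)^*1)=-(evb_0^{\beta_0})_*1$, and since $evb_0$ is an orientation-preserving diffeomorphism onto $L$ with $0$-dimensional fiber, this equals $-1\in A^0(L)$. One must double-check the sign: the prefactor $\varepsilon(\alpha;\gamma)$ with $k=0$, $l=1$, $\gamma_1=1$ contributes $|\gamma_1|+kn+1=0+0+1$, and recalling the convention ``$\q_{0,0}^\beta$ is understood as $-(evb_0^\beta)_*1$'' is built to make this come out to $-1$; I will verify the accounting matches.

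Finally, for $k=-1$ the same forgetful-map argument applies verbatim with $pt$ in place of $evb_0^\beta$: one has $\q_{-1,l}^\beta(1,\gamma_1,\ldots,\gamma_{l-1})=\pm\int_{\M_{0,l}(\beta)}\pi^*\xi$ where $\pi\colon\M_{0,l}(\beta)\to\M_{0,l-1}(\beta)$ forgets the marked point carrying $1$; this map is always defined here because the constraints $(l,\beta)\ne(1,\beta_0),(0,\beta_0)$ on the domain of $\q_{-1,l}^\beta$ rule out the only unstable configurations, and $\pi^*\xi$ has degree strictly less than $\dim\M_{0,l}(\beta)$, so the integral vanishes. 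The main obstacle I anticipate is purely bookkeeping: confirming that $\pi$ really is defined in all the claimed cases (the stability analysis of which bubble configurations survive after forgetting an interior point) and that the sign $\varepsilon$ together with the $\q_{0,0}$ convention produces exactly $-1$ rather than $+1$ in the exceptional case; the geometric input is otherwise entirely routine.
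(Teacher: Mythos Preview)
Your approach is the same as the paper's, but there is a genuine gap in your stability analysis: you have miscounted the cases where the forgetful map is undefined.

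For $k\ge 0$, the target $\M_{k+1,l-1}(\beta_0)$ is stable only when $(k+1)+2(l-1)\ge 3$, i.e.\ $k+2l\ge 4$. This fails at $(k,l)=(0,1)$ \emph{and} at $(k,l)=(1,1)$: the target $\M_{2,0}(\beta_0)$, a constant disk with only two boundary marked points, is not stable. So the case $(k,l,\beta)=(1,1,\beta_0)$ must be treated separately. For $k=-1$, your claim that the domain constraints $(l,\beta)\ne(1,\beta_0),(0,\beta_0)$ on $\q_{-1,l}^\beta$ rule out all unstable targets is false: when $(l,\beta)=(2,\beta_0)$ the source $\M_{0,2}(\beta_0)$ is perfectly good but the target $\M_{0,1}(\beta_0)$ is unstable, so $\pi$ is again undefined.

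Both missing cases are handled by the same direct computation. Since the maps in $\M_{2,1}(\beta_0)$ (resp.\ $\M_{0,2}(\beta_0)$) are constant, all evaluation maps factor through a single map $ev$ to $L$; one gets
\[
\q_{1,1}^{\beta_0}(\alpha_1;1)=\pm\,i^*1\wedge\alpha_1\wedge ev_*1,\qquad
\q_{-1,2}^{\beta_0}(1,\gamma_1)=\pm\,pt_*\bigl(\gamma_1\wedge i_*ev_*1\bigr),
\]
and in each case the fiber of $ev$ over $L$ has dimension $1>0$, so $ev_*1=0$. With these two cases filled in, your argument matches the paper's proof. (Your sign discussion for the $(0,1,\beta_0)$ case is correct; the reference to the $\q_{0,0}$ convention is a red herring, since here $\varepsilon=1$ already gives the minus sign directly.)
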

\begin{proof}
Whenever defined, consider $\pi:\M_{k+1,l}(\beta)\to\M_{k+1,l-1}(\beta),$ the forgetful map that forgets the first interior marked point, shifts the labeling of the rest, and stabilizes the resulting map. Similarly to the proof of Proposition~\ref{cl:unit}, using the notation $\varphi$ from Section~\ref{sssec:currents}, we get
\[
\varphi(\qkl^\beta(\alpha_1,\ldots,\alpha_k;1,\gamma_1,\ldots,\gamma_{l-1}))
=
\pm
(evb_0)_*(\wedge_{j=1}^{l-1}evi_j^*\gamma_j\wedge \wedge_{j=1}^k evb_j^*\a_j\wedge \pi_*\varphi(1))
=0
\]
whenever $\pi$ is defined. So, since $\varphi$ is injective, it follows that
\[
\qkl^\beta(\alpha_1,\ldots,\alpha_k;1,\gamma_1,\ldots,\gamma_{l-1}) = 0.
\]
The forgetful map $\pi$ is not defined only when forgetting the point will result in a non-stabilizable curve. This happens exactly when $\beta=\beta_0$ and $(k,l)\in\{(0,1),(1,1),(-1,2)\}$.

The case $(k,l,\beta)=(1,1,\beta_0)$ is treated as follows.
Since the stable maps in $\M_{2,1}(\beta_0)$ are constant, we have
\[
evb_0=evb_1,\quad evi_1=i\circ evb_0.
\]
So,
\begin{align*}
\q^{\beta_0}_{1,1}(\alpha_1;\gamma_1)
=&(-1)^{|\alpha_1|+1+1}(evb_0)_*evb_0^*( i^*\gamma_1\wedge\alpha_1)=(-1)^{|\alpha_1|}i^*\gamma_1\wedge\alpha_1\wedge (evb_0)_*1.
\end{align*}
But $\rdim(evb_0)=n-3+\mu(\beta_0)+k+1+2l-n>0$, so $(evb_0)_*1=0$.

The case $(k,l,\beta)=(-1,2,\beta_0)$ corresponds to the moduli space $\M_{0,2}(\beta_0).$ Again,
\[
evi_1=evi_2=:ev.
\]
Moreover, there is a unique map $evb : \M_{0,2}(\beta_0) \to L$ such that
\[
ev=i\circ evb.
\]
Thus,
\begin{align*}
\q^{\beta_0}_{-1,2}(\gamma_1,\gamma_2)=& pt_*ev^*(\gamma_1\wedge\gamma_2) \\
=& pt_*ev_*ev^*(\gamma_1\wedge\gamma_2)\\
=& pt_*((\gamma_1\wedge\gamma_2)\wedge ev_*1)\\
=& pt_*((\gamma_1\wedge\gamma_2)\wedge i_*evb_*1).
\end{align*}
But $\rdim(evb)=n-3+\mu(\beta_0)+2l-n>0$, so $evb_*1=0$.

The only case left is $(0,1,\beta_0)$, which corresponds to the moduli space $\M_{1,1}(\beta_0)$. As in the proof of Proposition~\ref{cl:unit}, the evaluation map $evb_0$ identifies the moduli space of maps with $L$, preserving orientation. Using this identification, we see that
\[
\q^{\beta_0}_{0,1}(1)=-(evb_0)_*evb_0^*i^*1=-\Id_*\Id^*1=-1.
\]

\end{proof}

\subsection{Energy zero}
\begin{cl}\label{q_zero}
For $k\ge 0,$
\[
\qkl^{\beta_0}(\alpha_1,\ldots,\alpha_k;\gamma_1,\ldots,\gamma_l)=
\begin{cases}
d\alpha_1, & (k,l)=(1,0),\\
(-1)^{|\alpha_1|}\alpha_1\wedge\alpha_2, & (k,l)=(2,0),\\
-\gamma_1|_L, & (k,l)=(0,1),\\
0, & \text{otherwise}.
\end{cases}
\]
Furthermore,
\[
\q_{-1,l}^{\beta_0}(\gamma_1,\ldots,\gamma_l)=0.
\]
\end{cl}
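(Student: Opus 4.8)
\textbf{Proof plan for Proposition~\ref{q_zero}.}
The idea is that for $\beta = \beta_0$ the moduli space $\M_{k+1,l}(\beta_0)$ consists entirely of constant stable maps, so it factors as a product of $L$ with the moduli space $\Mbar_{k+1,l}$ of stable genus-zero bordered Riemann surfaces with $k+1$ boundary and $l$ interior marked points, and all evaluation maps $evb_j^{\beta_0}$, $evi_j^{\beta_0}$ equal the projection to $L$ (composed with $i$ for the interior ones). I would first treat the cases where $\Mbar_{k+1,l}$ is a single point: $(k,l)=(1,0)$, $(2,0)$, and $(0,1)$. For $(1,0)$ the claim $\q_{1,0}^{\beta_0}(\alpha_1)=d\alpha_1$ holds by definition. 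For $(2,0)$ and $(0,1)$, $evb_0$ becomes a diffeomorphism onto $L$ preserving orientation (exactly as used in Propositions~\ref{cl:unit} and~\ref{q_fund}), so $\q_{2,0}^{\beta_0}(\alpha_1,\alpha_2)=(-1)^{\varepsilon}(evb_0)_*evb_0^*(\alpha_1\wedge\alpha_2)=\pm\alpha_1\wedge\alpha_2$, and similarly $\q_{0,1}^{\beta_0}(\gamma_1)=(-1)^{\varepsilon}(evb_0)_*evb_0^*(i^*\gamma_1)=\pm\gamma_1|_L$; one then checks that the sign $\varepsilon(\alpha;\gamma)=\sum_j j(|\alpha_j|+1)+\sum_j|\gamma_j|+kn+1$ reduces to $|\alpha_1|$ and to $|\gamma_1|+1$ respectively (modulo $2$), which is a short computation.

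For all remaining cases I would argue by a dimension/forgetful-map vanishing. If $(k,l,\beta_0)$ is such that $\Mbar_{k+1,l}$ has positive dimension, then $\dim\fiber(evb_0^{\beta_0}) = \dim\Mbar_{k+1,l} > 0$, since $evb_0^{\beta_0}$ is identified with the projection $\Mbar_{k+1,l}\times L\to L$. Writing $\q_{k,l}^{\beta_0}(\alpha;\gamma) = (-1)^\varepsilon (evb_0)_*\big(evb_0^*(i^*\gamma_1\wedge\cdots)\big)$ — all pulled back via $evb_0$ itself since every evaluation map coincides with $evb_0$ up to $i$ — we can apply the projection formula (property~\eqref{prop:pushpull}) to pull everything out of the push-forward, leaving a factor $(evb_0)_*1$, which is a form of degree $-\dim\fiber(evb_0) < 0$, hence zero. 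The same reasoning, with $pt$ in place of $evb_0$, handles $\q_{-1,l}^{\beta_0}$: here $\dim\fiber(pt)=\dim\M_{0,l}(\beta_0) = \dim(\Mbar_{0,l}\times L)\geq n>0$ for all $l$ with $\q_{-1,l}^{\beta_0}$ defined, so $pt_*1 = 0$ after pulling the forms out, giving $\q_{-1,l}^{\beta_0}=0$. (The values $\q_{1,0}^{\beta_0},\q_{0,0}^{\beta_0},\q_{-1,1}^{\beta_0},\q_{-1,0}^{\beta_0}$ that were fixed by hand in the definition are consistent, as can be checked directly.)

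The one point requiring a little care is making sure the factorization $\M_{k+1,l}(\beta_0)\cong\Mbar_{k+1,l}\times L$ is legitimate and orientation-compatible, and that all evaluation maps really do factor through the $L$-projection; this is standard (it is already invoked implicitly in Propositions~\ref{cl:unit} and~\ref{q_fund}) but is the conceptual heart of the argument. The sign bookkeeping in the three nondegenerate cases is routine, and the positive-dimensional-fiber vanishing is immediate once the factorization is in hand, so I expect no real obstacle — the proof is essentially a cleaner, more uniform version of the case analysis already carried out in the proof of Proposition~\ref{q_fund}.
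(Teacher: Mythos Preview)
Your proposal is correct and follows essentially the same approach as the paper: use that for $\beta_0$ all evaluation maps coincide (the paper writes $evb_0=\cdots=evb_k=:ev$ and $evi_j=i\circ ev$), apply the projection formula to reduce to a factor of $ev_*1$, and then observe this vanishes unless the fiber dimension $k+2l-2$ is zero, leaving only $(k,l)=(2,0)$ and $(0,1)$ to check by hand. One small imprecision: in the $k=-1$ case you cannot literally ``pull the forms out'' via the projection formula for $pt$, since the integrand is not $pt^*$ of anything; the paper (and implicitly your own $k\ge 0$ argument) instead factors $pt=pt_L\circ ev$ and applies the projection formula to $ev$, yielding $pt_*\big((\wedge_j i^*\gamma_j)\wedge ev_*1\big)$ with $ev_*1=0$ for $l\ge 2$.
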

\begin{proof}
The case $\q_{1,0}^{\beta_0}=d$ is true by definition. Let us consider the cases where $\q$ is defined by push-pull operations.

Since the stable maps in $\M_{k+1,l}(\beta_0)$ are constant, we have
\[
evb_0 = \cdots = evb_k =: evb, \qquad evi_1 = \cdots = evi_l = i \circ evb.
\]
Thus, for $k\ge 0,$
\begin{align*}
\qkl^{\beta_0}(\alpha_1,\ldots,\alpha_k;\gamma_1,\ldots,\gamma_l)=& (-1)^{\varepsilon(\alpha)}evb_*evb^*(\wedge_{j=1}^li^*\gamma_j\wedge\wedge_{j=1}^k\alpha_j)\\
=&(-1)^{\varepsilon(\alpha)}(\wedge_{j=1}^l\gamma_j|_L\wedge\wedge_{j=1}^k\alpha_j)\wedge evb_*1.
\end{align*}
For $k=-1,$
\begin{align*}
\q^{\beta_0}_{-1,l}(\gamma_1,\ldots,\gamma_l)
=& pt_*(i\circ evb)_*(i\circ evb)^*(\wedge_{j=1}^l\gamma_j)\\
=&pt_* ((\wedge_{j=1}^l\gamma_j)\wedge i_*evb_*1).
\end{align*}
In order for $evb_*1$ to be nonzero, we need
\[
0=\rdim(evb)=n-3+\mu(\beta_0)+k+1+2l-n
=k+2l-2.
\]
Let us analyze when this equality is possible.

If $l=1$, then $k=0$,  and $evb:\M_{1,1}(\beta_0)\stackrel{\sim}{\to}L$. This diffeomorphism preserves orientation by the argument on page~739 of~\cite{FOOO}. So, $\q^{\beta_0}_{k,l}(\gamma_1)=-\gamma_1|_L$ by the above computation.

If $l=0$, then $k=2$, and $evb:\M_{3,0}(\beta_0)\stackrel{\sim}{\to}L$. This diffeomorphism preserves orientation by the argument on page~714 of~\cite{FOOO} based on their Convention~8.3.1. So, again by the computation above,
\[
\q^{\beta_0}_{k,l}(\alpha_1,\alpha_2)=(-1)^{|\alpha_1|+1+2(|\alpha_2|+1)+1}\alpha_1\wedge\alpha_2.
\]

\end{proof}

\subsection{Divisors}

\begin{cl}\label{cl:q_div}
Assume $\gamma_1\in A^2(X,L)\otimes Q$, $d\gamma_1 = 0$,
and the map $H_2(X,L;\Z)\to Q$ given by $\beta\mapsto\int_\beta\gamma_1$ descends to $\sly$.
Then
	\begin{equation}
	\qkl^{\beta}(\otimes_{j=1}^k\alpha_j;\otimes_{j=1}^{l}\gamma_j)=
	\left(\int_\beta\gamma_1\right) \cdot\q_{k,l-1}^{\beta} (\otimes_{j=1}^k\alpha_j;\otimes_{j=2}^{l}\gamma_j)
	\end{equation}
for $k\ge -1$.
	\end{cl}
	
 The proof requires the following two results, which will be proved after the main proposition.
\begin{lm}\label{lm:current_bd}
Let $M$ be a connected oriented orbifold with corners and let $\alpha$ be a degree-$0$ current on $M.$ Suppose there is a current $f$ on $\d M$ such that for any $\eta\in \Ac^{top-1}(M),$
\[
\alpha(d\eta)=f(i_{M}^*\eta),\qquad i_{M}:\d M\lrarr M.
\]
Then there is a constant $\kappa\in \R$ such that
\[
\alpha(\gamma)=\kappa\cdot\int_M\gamma\qquad \forall \gamma\in \Ac^{top}(M).
\]
\end{lm}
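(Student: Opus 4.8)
I would prove this by a cohomological argument that never touches the pointwise structure of $\alpha$. Assume $M$ is connected (otherwise apply the result componentwise, so that $\kappa$ is only locally constant). Applying the hypothesis to those $\eta\in A^{top-1}(M)$ with $i_{\d M}^*\eta=0$ shows that the linear functional $\alpha\colon A^{top}(M)\to\R$ annihilates the subspace
\[
V:=d\bigl(A^{top-1}(M,\d M)\bigr)\subseteq A^{top}(M),\qquad
A^{k}(M,\d M):=\{\,\omega\in A^{k}(M)\mid i_{\d M}^*\omega=0\,\}.
\]
The plan is to show that $V$ is exactly the kernel of $\gamma\mapsto\int_M\gamma$; then $\alpha$, vanishing on that kernel and being a functional on $A^{top}(M)$, must equal $\kappa\cdot\int_M(\,\cdot\,)$ for a unique $\kappa\in\R$, which is the assertion.

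To identify $V$: since $\dim\d M=\dim M-1$ we have $A^{top}(\d M)=0$, hence $A^{top}(M,\d M)=A^{top}(M)$, and the relative de Rham complex $(A^*(M,\d M),d)$ computes $H^*(M,\d M;\R)$. By Poincar\'e--Lefschetz duality $H^{top}(M,\d M;\R)\cong H_0(M;\R)\cong\R$ (here $M$ is compact, connected and oriented), and under the de Rham isomorphism this one-dimensional space is detected by $[\gamma]\mapsto\int_M\gamma$. Hence $V=\{\gamma\in A^{top}(M)\mid\int_M\gamma=0\}$: the inclusion $\subseteq$ is Stokes' theorem applied to $\eta\in A^{top-1}(M,\d M)$, and $\supseteq$ is the statement that a top form with vanishing integral is $d$ of a relative $(top-1)$-form. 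Since $\int_M$ is a nonzero functional, the conclusion follows as indicated, and $\varphi(\kappa\cdot 1)=\alpha$ by construction. (When $\d M=\emptyset$ the argument is the same, with $H^{top}(M;\R)\cong\R$.)

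The only substantive inputs are the relative de Rham theorem and Poincar\'e--Lefschetz duality, which I would need in the oriented-orbifold-with-corners setting; both are standard (work with $G$-invariant forms on charts; use that $i_{\d M}^*\colon A^*(M)\to A^*(\d M)$ is surjective, so $0\to A^*(M,\d M)\to A^*(M)\xrightarrow{i_{\d M}^*} A^*(\d M)\to 0$ is exact, and apply the known duality to the underlying space), and they are the point one must cite with care. If one prefers to bypass duality entirely, an equivalent route is: forms supported in $M^\circ$ have vanishing pullback to $\d M$, so $\alpha|_{M^\circ}$ is a closed degree-$0$ current on the connected manifold $M^\circ$ and hence a constant $\kappa$; then $\mu:=\alpha-\varphi(\kappa\cdot 1)$ is supported on $\d M$, and extending a top form $\theta$ on $\d M$ as $\rho(x)\,\pi^*\theta$ through a collar $\d M\times[0,1)$ (using $d\theta=0$ since $\theta$ has top degree on $\d M$) forces $\mu(d\eta)=0$ for all $\eta$, i.e. $d\mu=0$; finally $H^{top}(M;\R)=0$ when $\d M\neq\emptyset$ makes $d\colon A^{top-1}(M)\to A^{top}(M)$ onto, whence $\mu=0$.
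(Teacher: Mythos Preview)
Your main argument is essentially identical to the paper's: both show that $\alpha$ descends to $H^{top}(M,\partial M)$ (by applying the hypothesis to $\eta$ with $i_{\partial M}^*\eta=0$) and then invoke Poincar\'e(--Lefschetz) duality to identify this group with $\R$ via integration over $M$. You are more explicit about the connectedness and compactness assumptions, and you supply an alternative route via closed currents on $M^\circ$, but the core approach coincides with the paper's.
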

In the following, we use the inclusion of forms in currents from Section~\ref{sssec:currents}.
\begin{lm}\label{lm:currentconst}
Suppose either $\beta\neq \beta_0$ or $\beta = \beta_0$ and $(k,l)\notin\{(0,1),(1,1),(-1,2)\}.$
Let $\pi:\M_{k+1,l}(\beta)\to\M_{k+1,l-1}(\beta)$ be the map that forgets the first interior marked point, shifts the labels of the others down by one, and stabilizes the resulting map.
Denote by $evi_1$ the evaluation map at the first interior point for $\M_{k+1,l}(\beta)$.
Let $\gamma\in A^*(X)$ such that $\gamma|_{L}=0,$ $|\gamma|=2$, and $d\gamma = 0$.
Assume the map $H_2(X,L;\Z)\to \R$ given by $\beta\mapsto\int_\beta\gamma$ descends to $\sly$.
Then, the current $\pi_* evi_1^*\gamma$ coincides with the current corresponding to the constant $\int_\beta \gamma.$
\end{lm}
	
\begin{proof}[Proof of Proposition~\ref{cl:q_div}]
When $\beta = \beta_0,$ the proposition follows from Proposition~\ref{q_zero}, so we may assume $\beta \neq \beta_0.$
Denote by $\pi:\M_{k+1,l}(\beta)\to\M_{k+1,l-1}(\beta)$ the forgetful map as in Lemma~\ref{lm:currentconst}.
Denote by $evb_j^l, evi_j^l,$ the evaluation maps for $\M_{k+1,l}(\beta)$, and denote by $evb_j^{l-1},evi_j^{l-1},$ the evaluation maps for $\M_{k+1,l-1}(\beta)$.

For $k\ge 0,$ set $\xi:=\wedge_{j=2}^l (evi^{l-1}_{j-1})^*\gamma_{j}\wedge\wedge_{j=1}^k(evb^{l-1}_j)^*\alpha_j$. Then, we have the equality of currents,
\begin{equation}
\begin{split}\label{eq:split}
\qkl^{\beta}(\alpha_1,\ldots,\alpha_k;\gamma_1,\ldots,\gamma_l)
&=(-1)^{\varepsilon(\alpha)} (evb^l_0)_*((evi^l_1)^*\gamma_1\wedge\pi^*\xi)\\
=&(-1)^{\varepsilon(\alpha)+|\xi|\cdot|\gamma_1|}(evb^{l-1}_0)_*\pi_*(\pi^*\xi\wedge (evi^l_1)^*\gamma_1)\\
=&(-1)^{\varepsilon(\alpha)+|\xi|\cdot|\gamma_1|}(evb^{l-1}_0)_*(\xi\wedge \pi_*(evi^l_1)^*\gamma_1)\\
=&
(-1)^{\varepsilon(\alpha)+|\xi|\cdot\rdim \pi}(evb^{l-1}_0)_*(\pi_*(evi^l_1)^*\gamma_1 \wedge\xi)\\
=&
(-1)^{\varepsilon(\alpha)}(evb^{l-1}_0)_*(\pi_*(evi^l_1)^*\gamma_1 \wedge\xi).
\end{split}
\end{equation}
Similarly, for $k=-1,$ set $\xi:=\wedge_{j=2}^l(evi^{l-1}_{j-1})^*\gamma_j$ and compute
\begin{equation}
\begin{split}
\q^\beta_{-1,l}(\gamma_1,\ldots,\gamma_l)
=&
pt_*(\pi_*(evi^l_1)^*\gamma_1 \wedge\xi).
\end{split}
\label{eq:split2}
\end{equation}
By Lemma~\ref{lm:currentconst}, $\pi_*(evi_1^l)^*\gamma_1$ is the current corresponding to the constant $\int_\beta \gamma_1.$
Substituting this value in~\eqref{eq:split}, we get
\begin{multline*}
\qkl^{\beta}(\alpha_1,\ldots,\alpha_k;\gamma_1,\ldots,\gamma_l)
=(-1)^{\varepsilon(\alpha)} \int_{\beta}\gamma_1\cdot(evb^{l-1}_0)_*\xi=\\
=\int_{\beta}\gamma_1\cdot\q_{k,l-1}^{\beta}(\alpha_1,\ldots,\alpha_k;\gamma_2,\ldots,\gamma_l).
\end{multline*}
Similarly, substituting $\int_\beta \gamma_1$ in~\eqref{eq:split2}, we get
\[
\q^\beta_{-1,l}(\gamma_1,\ldots,\gamma_l)=
\int_{\beta}\gamma_1\cdot pt_*\xi
=\int_{\beta}\gamma_1\cdot\q^\beta_{-1,l-1}(\gamma_2,\ldots,\gamma_l).
\]
\end{proof}

We return to the proof of the auxiliary lemmas.
\begin{proof}[Proof of Lemma~\ref{lm:current_bd}]
Let $\gamma,\gamma'\in \Ac^{top}(M).$ Then $i_{M}^*\gamma= i_{M}^*\gamma'=0$ for degree reasons.
Assume $[\gamma]=[\gamma']\in H_c^{top}(M,\d M).$ Choose $\zeta\in \Ac^{top-1}(M)$ such that  $i_{M}^*\zeta=0$ and $\gamma-\gamma'=d\zeta.$ Then
\[
\alpha(\gamma)-\alpha(\gamma')
=f(i_M^*\zeta)=0,
\]
so $\alpha(\gamma)=\alpha(\gamma').$ This shows $\alpha(\gamma)$ depends only on the relative cohomology class of $\gamma$.
On the other hand, by Poincar\'e duality, we have an isomorphism $H^{top}_c(M,\d M) \to \R$ given by integration over $M.$

\end{proof}

\begin{proof}[Proof of Lemma~\ref{lm:currentconst}]
Since $\pi$ is not a submersion, the push-forward of a differential form along $\pi$ is not defined as a differential form. Rather, for a differential form $\zeta \in A^*(\M_{k+1,l}(\beta)),$
we abbreviate $\pi_*\zeta$ for the push-forward along $\pi$ of the current corresponding to $\zeta$ as explained in Section~\ref{sssec:currents}.

Decompose the boundary,
\[
\d\M_{k+1,l}(\beta) = \d^{\text{hor}}\M_{k+1,l}(\beta)\coprod \d^{\text{vert}}\M_{k+1,l}(\beta),
\]
where $\d^{\text{hor}}\M_{k+1,l}(\beta)$ is the part of the boundary that does not require stabilization after forgetting $w_1,$ and $\d^{\text{vert}}\M_{k+1,l}(\beta)$ is the part of the boundary that does. Generic points of $\d^{\text{vert}}\M_{k+1,l}(\beta)$ are mapped by $\pi$ to interior points of $\M_{k+1,l-1}(\beta)$, whereas $\d^{\text{hor}}\M_{k+1,l}(\beta)$ is mapped to $\d\M_{k+1,l-1}(\beta)$.
Thus, we have the following commutative diagram:
\[
\xymatrix{
\d^{\text{hor}}\M_{k+1,l}(\beta)\ar@{^{(}->}[r]\ar[d]^{\pi_{\d}}\ar @/^15pt/[rr]^{i_l^{\text{hor}}}&\d\M_{k+1,l}(\beta)\ar[r]_{i_{l}} &\M_{k+1,l}(\beta)\ar[d]^\pi\\
\d\M_{k+1,l-1}(\beta)\ar[rr]^{i_{l-1}}&&\M_{k+1,l-1}(\beta)\:.
}
\]
Take $\gamma\in A^*(X)$ as in the statement of the lemma.
For short, write
$M_1:=\M_{k+1,l}(\beta),$ $M_2:=\M_{k+1,l-1}(\beta),$ and $\zeta:=(evi_1)^*\gamma \in A^*(M_1)$.
By definition, for arbitrary $\eta\in \Ac^{top-1}(\M_{k+1,l-1}(\beta)),$ since $|\zeta|=\rdim \pi=2,$ we have
\begin{multline*}
(\pi_*\zeta)(d\eta)=
(-1)^{\rdim \pi \cdot |d\eta|}
\int_{M_1}\zeta\wedge\pi^*d\eta= \\
=\int_{M_1}\zeta\wedge d(\pi^*\eta)=\int_{M_1}d(\zeta\wedge\pi^*\eta)= \int_{\d M_1}(i_l)^*(\zeta\wedge\pi^*\eta).
\end{multline*}
Note that $\zeta|_{\d^{\text{vert}} M_1}=0$, because the interior marked point $w_1$ is located on a ghost bubble that maps entirely to $L$, and $\gamma|_L=0.$ So, the computation continues
\begin{multline*}
(\pi_*\zeta)(d\eta)
=\int_{\d^{\text{hor}}M_1}(i_l^{\text{hor}})^*(\zeta\wedge\pi^*\eta)
=\int_{\d^{\text{hor}}M_1}(i_l^{\text{hor}})^*\zeta\wedge (i_l^{\text{hor}})^*\pi^*\eta=\\
=\int_{\d^{\text{hor}}M_1}(i_l^{\text{hor}})^*\zeta\wedge \pi_{\d}^*(i_{l-1}^*\eta)=\left((\pi_{\d})_*((i_l^{\text{hor}})^*\zeta)\right)(i_{l-1}^*\eta),
\end{multline*}
where the sign in the last equality is trivial again because $\rdim\pi_{\d}=2$. Note that if $\M_{k+1,l-1}(\beta)$ is not connected, the same computation is valid for each connected component separately.

By Lemma~\ref{lm:current_bd}, for each connected component $B$ of $\M_{k+1,l-1}(\beta)$ there is a constant $\kappa_B$ such that
\[
(\pi_*\zeta)(\eta)=\kappa_B\cdot\int\limits_{B}\eta,\qquad \forall\eta\in \Ac^{top}(B).
\]
To compute the value of $\kappa_B,$ consider a point $p=(\Sigma,u,\vec{z},\vec{w})\in B$ that is a regular value of~$\pi$. In a neighborhood of such $p$, we can calculate $\pi_*\zeta$ as the push-forward of a differential form. Indeed, using identification~\eqref{eq:fiberid} together with properties~\eqref{prop:pushfiberprod} and~\eqref{normalization} of integration, we obtain
\begin{equation}\label{eq:kappa}
\kappa_B=(\pi_*\zeta)_{p}=\int_{\pi^{-1}(p)}\zeta = \int_{\pi^{-1}(p)} evi_1^*\gamma.
\end{equation}
To continue,
denote by $v:\widetilde{\Sigma}\to\Sigma$ the oriented real blowup of $\Sigma$ at $z_0,\ldots,z_k.$
\begin{figure}[ht]
\centering
\includegraphics[width=14cm]{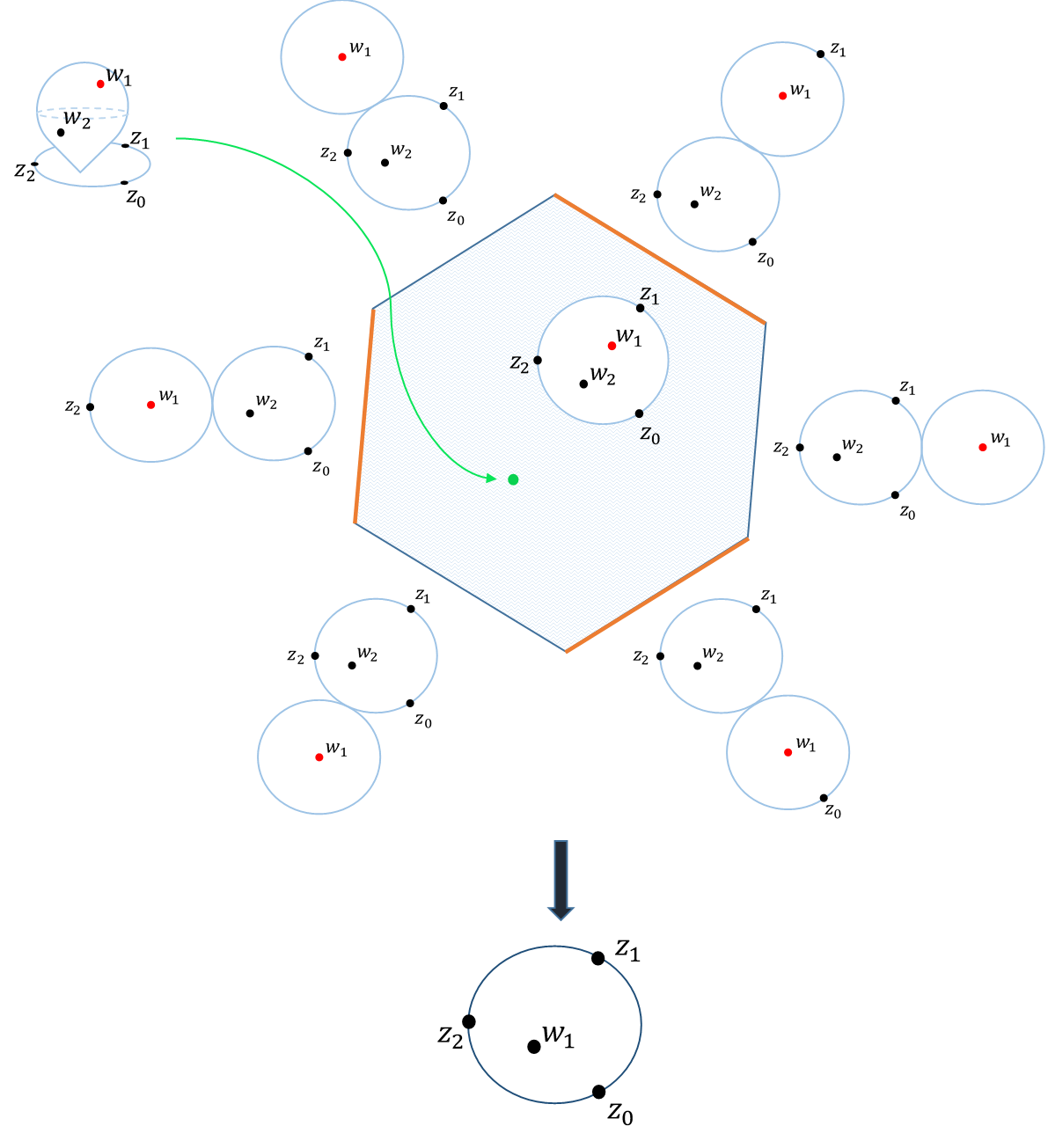}
\caption{The fiber of $\pi$ over
$(\Sigma,u,\vec{z},\vec{w})$ is the oriented real blowup of the domain at the boundary marked points. The exceptional locus of the blowup is shown in orange.}
\label{fig:fib}
\end{figure}
As explained in the proof of~\cite[Lemma 4.5]{PandharipandeSolomonTessler}, there exists a canonical orientation preserving isomorphism
\[
\psi:\widetilde{\Sigma}\stackrel{\sim}{\lrarr}\pi^{-1}(p).
\]
See Figure~\ref{fig:fib}.
Moreover, $evi_1 \circ\psi= u\circ v$.
Since $u_*[\Sigma]=(u\circ v)_*[\widetilde{\Sigma}]\in H_2(X,L;\Z)$, using equation~\eqref{eq:kappa}, we obtain
\[
\kappa_B = \int_{\pi^{-1}(p)} evi_1^*\gamma = \int_{\widetilde{\Sigma}}v^* u^*\gamma=\int_\beta\gamma.
\]
Since the last value does not depend on the component $B$, this proves the desired result.
\end{proof}

\subsection{Top degree}\label{ssec:no_top_deg}

Given $\alpha$, a homogeneous differential form with coefficients in $R$, denote by $\deg^d(\alpha)$ the degree of the differential form, ignoring the grading of $R$.
That is, for $\alpha =T^\beta t_1^{r_1}\cdots t_N^{r_N}\alpha'$ with $\alpha'\in A^j(L),$ we have $\deg^d(\alpha)=j.$

Denote by $(\alpha)_j$ the part of $\alpha$ that has degree $j$ as a differential form, ignoring the grading of $R$.
In particular, $\deg^d((\alpha)_j)=j$.

\begin{cl}\label{no_top_deg}
Suppose $(k,l,\beta)\not\in\{(1,0,\beta_0),(0,1,\beta_0),(2,0,\beta_0)\}$.
Then $(\qkl^\beta(\alpha;\gamma))_n=0$ for all lists $\alpha,\gamma$.
\end{cl}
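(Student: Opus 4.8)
The plan is to reduce the statement to an integration identity on $L$ and then exploit the forgetful map that drops the boundary marked point $z_0$. First I would note that, since $\q_{k,l}^\beta(\alpha;\gamma)$ is a differential form on $L$ with coefficients in $R$, its degree-$n$ component vanishes provided that, for every $g\in A^0(L)$,
\[
\int_L\q_{k,l}^\beta(\alpha_1,\ldots,\alpha_k;\gamma_1,\ldots,\gamma_l)\wedge g=0;
\]
indeed, only the top component of $\q_{k,l}^\beta(\alpha;\gamma)$ contributes to this integral, and a top-degree form on $L$ that integrates to zero against every function vanishes. The triple $(k,l,\beta)=(0,0,\beta_0)$, which the hypothesis permits, is disposed of at once, since $\q_{0,0}^{\beta_0}=0$ by definition; so from now on I assume it excluded as well.

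The key step is to bring in the forgetful map $\pi\colon\M_{k+1,l}(\beta)\to\M_{k,l}(\beta)$ that forgets $z_0$ and stabilizes. This map is defined exactly when the target $\M_{k,l}(\beta)$ is a stable moduli space, and this fails only for $\beta=\beta_0$ with $k+2l<3$, i.e.\ for $(k,l,\beta)\in\{(0,0,\beta_0),(1,0,\beta_0),(2,0,\beta_0),(0,1,\beta_0)\}$; the last three of these are precisely the triples excluded by hypothesis, and the first was settled above, so $\pi$ is available in every remaining case. Since forgetting a boundary point and stabilizing contracts only ghost components, on which the stable map is constant, the interior evaluation maps $evi_j^\beta$ and the boundary evaluation maps $evb_j^\beta$ with $j\ge1$ all factor through $\pi$; hence, writing
\[
\omega:=\bigwedge_{j=1}^l(evi_j^\beta)^*\gamma_j\wedge\bigwedge_{j=1}^k(evb_j^\beta)^*\alpha_j,
\]
we have $\omega=\pi^*\bar\omega$ for a form $\bar\omega$ on $\M_{k,l}(\beta)$, whereas $evb_0^\beta$ does not factor through $\pi$.

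I would then use that $evb_0^\beta$ is a proper submersion, together with Properties~\eqref{normalization}, \eqref{prop:pushcomp} and \eqref{prop:pushpull} and the fact that $|g|=0$, to rewrite
\[
\int_L\q_{k,l}^\beta(\alpha;\gamma)\wedge g=\pm\int_L(evb_0^\beta)_*(\pi^*\bar\omega)\wedge g=\pm\int_{\M_{k+1,l}(\beta)}\pi^*\bar\omega\wedge(evb_0^\beta)^*g.
\]
Since $(evb_0^\beta)^*g$ has degree $0$ and $\pi^*\bar\omega$ is pulled back from $\M_{k,l}(\beta)$, whose dimension is one less than that of $\M_{k+1,l}(\beta)$ by the dimension count in the proof of Lemma~\ref{lm:deg_q}, the integrand has degree $|\bar\omega|\le\dim\M_{k,l}(\beta)<\dim\M_{k+1,l}(\beta)$, unless $\bar\omega=0$ outright; in either case the integrand is not of top degree on $\M_{k+1,l}(\beta)$, so the integral vanishes. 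Hence $\int_L\q_{k,l}^\beta(\alpha;\gamma)\wedge g=0$ for all $g$, and therefore $(\q_{k,l}^\beta(\alpha;\gamma))_n=0$.

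The only point requiring care is the bookkeeping around $\pi$: one must check that $\pi$ is genuinely defined for every triple allowed by the hypothesis --- this is exactly where the three excluded triples enter, as the loci at which $\M_{k,l}(\beta)$ ceases to be a stable moduli space --- and that the relevant evaluation maps factor through it. Note that $\pi$ need not be a submersion, but this is harmless: the argument uses only that $\pi^*$ preserves the degree of forms and that $\dim\M_{k,l}(\beta)<\dim\M_{k+1,l}(\beta)$.
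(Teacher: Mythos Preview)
Your proof is correct and uses the same core idea as the paper: the forgetful map $\pi:\M_{k+1,l}(\beta)\to\M_{k,l}(\beta)$ together with the observation that the relevant form is pulled back from a space of strictly smaller dimension. The paper's argument is slightly more direct: rather than pairing against test functions $g\in A^0(L)$, it assumes the output is homogeneous of degree $n$, deduces that the pulled-back form $\xi'$ on $\M_{k,l}(\beta)$ has degree $\dim\M_{k+1,l}(\beta)>\dim\M_{k,l}(\beta)$, and concludes $\xi'=0$ (hence $\xi=\pi^*\xi'=0$) immediately, without ever integrating. Your detour through integration is unnecessary but harmless; your explicit handling of $(k,l,\beta)=(0,0,\beta_0)$ is a small improvement over the paper, which silently relies on $\q_{0,0}^{\beta_0}=0$ by definition.
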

\begin{proof}
Assume without loss of generality that $\qkl^\beta(\alpha;\gamma)$ is homogeneous with respect to the grading $\deg^d.$ Let $evb_j^{k+1},evi_j^{k+1},$ be the evaluation maps for $\M_{k+1,l}(\beta).$ Set
\[
\xi:=\bigwedge_{j=1}^l(evi_j^{k+1})^*\gamma_j\wedge\bigwedge_{j=1}^k(evb_j^{k+1})^*\alpha_j,
\]
that is, $\qkl^\beta(\alpha;\gamma)=(-1)^{\varepsilon(\alpha)}(evb_0^{k+1})_*\xi$. If
\[
\deg^d(\qkl^\beta(\alpha;\gamma))=n,
\]
then
\[
n=\deg^d(\xi)-\rdim(evb_0)
=\deg^d(\xi)-(\dim\M_{k+1,l}(\beta)-n)=\deg^d(\xi)-\dim\M_{k+1,l}(\beta)+n,
\]
so $\deg^d(\xi)=\dim\M_{k+1,l}(\beta)$.

On the other hand, if $\pi:\M_{k+1,l}(\beta)\to\M_{k,l}(\beta)$ is the map that forgets $z_0$, and $evb_j^k,evi_j^k,$ are the evaluation maps for $\M_{k,l}(\beta),$ then $\xi=\pi^*\xi'$ where
\[
\xi'=\bigwedge_{j=1}^l(evi^k_j)^*\gamma_j\wedge \bigwedge_{j=1}^k(evb^k_{j-1})^*\alpha_j \in A^*(\M_{k,l}(\beta)).
\]
In particular
\[
\deg^d(\xi')=\deg^d(\xi)=\dim\M_{k+1,l}(\beta)>\dim\M_{k,l}(\beta).
\]
Therefore, $\xi'=0$ and so $\xi=0$.
\end{proof}

\subsection{Chain map}
Write
\[
T(D):=\bigoplus_{l\ge 0}D^{\otimes l}.
\]
This forms a complex with the inherited differential
defined,
for $\eta_l=\bigotimes_{j=1}^l\eta^j_l \in D^{\otimes l}$,
by
\[
d(\bigoplus_{l\ge 0}\eta_l)
:=\bigoplus_{l\ge 0}\left(\sum_{i=1}^l(-1)^{\sum_{j=1}^{i-1}|\eta^j_l|}
\bigg(\bigotimes_{j=1}^{i-1}\eta^j_l\otimes d\eta^i_l
\otimes\bigotimes_{j=i+1}^l\eta^j_l\bigg)\right).
\]
The operators $\q_{\emptyset,l}$ extend naturally to a map
\[
\q_{\emptyset}: T(D) \to A^*(X;Q)
\]
given by
\[
\q_{\emptyset}(\bigoplus_{l\ge 0}\eta_l)
:=\sum_{l\ge 0}\q_{\emptyset,l}(\eta_l).
\]
\begin{cl}\label{cl:chain}
The operator $\q_{\emptyset}$ is a chain map on $T(D).$ That is,
\[
\q_{\emptyset}(d\eta)=d\q_{\emptyset}(\eta),\quad
\forall\eta\in T(D).
\]
\end{cl}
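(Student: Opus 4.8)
The plan is to apply the classical Stokes' theorem to the moduli spaces $\M_{l+1}(\beta)$ of $J$-holomorphic spheres, exactly as in the proof of Proposition~\ref{q_rel}, but now using the interior evaluation map $ev_0^\beta$ as the reference submersion. First I would fix $\eta=\bigoplus_l\eta_l\in T(D)$ and, by $Q$-multilinearity and the definition of the differential on $T(D)$, reduce to the case of a single homogeneous tensor $\eta_l=\gamma_1\otimes\cdots\otimes\gamma_l$. Then the claim $\q_\emptyset(d\eta)=d\q_\emptyset(\eta)$ becomes, for each $\beta\in H_2(X;\Z)$ and each $l$, the identity
\[
\sum_{j=1}^l(-1)^{\sum_{r<j}|\gamma_r|}(ev_0^\beta)_*\Bigl(\bigwedge_{r<j}(ev_r^\beta)^*\gamma_r\wedge(ev_j^\beta)^*d\gamma_j\wedge\bigwedge_{r>j}(ev_r^\beta)^*\gamma_r\Bigr)
= d\,(ev_0^\beta)_*\Bigl(\bigwedge_{r=1}^l(ev_r^\beta)^*\gamma_r\Bigr),
\]
which I would obtain from Stokes' theorem (Proposition~\ref{stokes}) applied to $f=ev_0^\beta$ and $\xi=\bigwedge_{r=1}^l(ev_r^\beta)^*\gamma_r$.

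The three terms of Stokes' theorem are handled as follows. The term $d(f_*\xi)$ is the right-hand side above. The term $f_*(d\xi)$ produces, by the Leibniz rule and the sign convention for pullbacks, precisely the left-hand side $\q_{\emptyset}(d\eta_l)$ up to an overall sign that I will track against the $\varepsilon$-type normalization built into $\q_{\emptyset,l}$; here it is important that $\q_{\emptyset,l}$ carries no extra sign, so the bookkeeping is lighter than in Proposition~\ref{q_rel}. The remaining term is the fiberwise boundary contribution $(f|_{\d\M_{l+1}(\beta)})_*\xi$. The codimension-one boundary of $\M_{l+1}(\beta)$ consists of stable maps with two sphere components joined at a node, and I claim this contribution vanishes. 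The reason is exactly the degree/dimension count that appears in the proof of Proposition~\ref{no_top_deg}: on such a boundary stratum $\M_{l_1+1}(\beta_1)\times_X\M_{l_2+1}(\beta_2)$, the form $\xi$ is pulled back from a space of strictly smaller dimension (one of the two factors, after forgetting the nodal structure, maps via a forgetful map with positive-dimensional fibers since $ev_0$ sits on one component), so $\xi$ restricted there is zero. Alternatively, one observes that $ev_0$ restricted to a boundary stratum factors through a forgetful map, making the push-forward of $\xi$ to $X$ vanish on that stratum by the same argument used for $\xi'=0$ in Proposition~\ref{no_top_deg}.

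The main obstacle is the vanishing of the boundary term: one must be careful that $ev_0$ is always an evaluation map at a marked point lying on a fixed component, so that after passing to a boundary stratum the relevant form is genuinely pulled back along a non-injective forgetful-type map to a lower-dimensional moduli space. I would argue this cleanly by noting that on each boundary divisor the domain curve has a ghost-free or ghost component, and in every case the map $ev_0$ together with the constraints $\gamma_1,\dots,\gamma_l$ factors through $\M_{l',1}$ or a product with a positive-dimensional fiber, forcing the pulled-back top-degree-obstructing form to vanish; this is the identical mechanism as in the proof of Proposition~\ref{no_top_deg}. Once the boundary term is shown to vanish, Stokes' theorem directly yields $d\q_{\emptyset}(\eta)=\q_{\emptyset}(d\eta)$ after dividing by the common sign, and summing over $\beta$ with the Novikov weights $T^{\pr(\beta)}$ completes the proof. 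The degenerate cases $(l,\beta)=(1,0),(0,0)$ are excluded by the definition $\q_{\emptyset,1}^0=\q_{\emptyset,0}^0=0$ and contribute nothing.
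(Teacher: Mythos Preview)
Your overall plan via Stokes' theorem is the right one and matches the paper, but you misidentify the boundary term. The moduli space $\M_{l+1}(\beta)$ of stable $J$-holomorphic spheres has \emph{no} codimension-one boundary: the nodal strata you describe are of \emph{complex} codimension one, hence real codimension two, so they do not appear in the boundary term of Proposition~\ref{stokes} at all. The paper's proof is a single sentence precisely for this reason: since the fiber of $ev_0^\beta$ has no codimension-one boundary, Stokes' theorem gives $d\,(ev_0^\beta)_*\xi=(ev_0^\beta)_*d\xi$ immediately.

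Your attempted dimension/forgetful-map argument for why the ``boundary'' contribution vanishes is therefore unnecessary, and as written it does not go through. The mechanism from Proposition~\ref{no_top_deg} relies on an \emph{unconstrained} marked point (there it is $z_0$) that can be forgotten; on a nodal sphere stratum all of $w_0,\dots,w_l$ and the node carry constraints or are the output, so there is no forgetful map with positive-dimensional fiber through which $\xi$ factors. Once you replace your boundary discussion with the observation that $\partial\M_{l+1}(\beta)=\emptyset$ in codimension one, the rest of your argument (Leibniz for $f_*(d\xi)$, summing over $\beta$, the degenerate cases) is fine.
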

\begin{proof}
Since the fiber of $ev_0:\M_{l}(\beta)\to X$ has no boundary, Stokes' theorem implies that $(ev_0)_*$ commutes with $d$.
\end{proof}

\subsection{Proofs of Theorems~\ref{thm:str} and~\ref{thm:prop}}\label{ssec:proofs}

\begin{proof}[Proof of Theorem~\ref{thm:str}]
The degree of $\mg_k$ is given by Proposition~\ref{deg_str_map}. Properties~\eqref{it:lin}-\eqref{it:plin} follow from Lemma~\ref{lm:qlinear}.
Property~\eqref{it:a_infty} follows from Proposition~\ref{cl:a_infty_m}.
Properties~\eqref{it:val} and~\eqref{it:val2} are immediate from the definitions.
Properties~\eqref{it:symm} and~\eqref{it:cyclic} follow from equation~\eqref{eq:psgn} and Proposition~\ref{cl:cyclic} respectively.
Properties~\eqref{it:unit1} and~\eqref{it:unit3} follow from Proposition~\ref{cl:unit}.
Property~\eqref{it:unit2} follows from Proposition~\ref{no_top_deg}, Proposition~\ref{q_zero}, and because by assumption $\langle\gamma|_L,1\rangle=\int_L\gamma|_L=0$.

\end{proof}

\begin{proof}[Proof of Theorem~\ref{thm:prop}]
Properties~\eqref{prop1},~\eqref{prop2}, and~\eqref{prop3}, follow from Propositions~\ref{q_fund}, \ref{cl:q_div}, and~\ref{q_zero}, respectively.

\end{proof}

\section{Pseudo-isotopies}\label{pseudoisot}
\subsection{Structure}
Recall from Section~\ref{eq:mC} that $\mC:=A^*(I\times L;R)$.
We construct a family of $A_\infty$ structures on $\mC$. Fix a family of $\omega$-tame almost complex structures $\{J_t\}_{t\in I}$.
For each $\beta, k, l,$ set
\[
\Mt_{k+1,l}(\beta):=\{(t,\uu)\,|\,\uu\in\M_{k+1,l}(\beta;J_t)\}.
\]
The moduli space $\Mt_{k+1,l}(\beta)$ comes with evaluation maps
\begin{gather*}
\evbt_j:\Mt_{k+1,l}(\beta)\lrarr I\times L, \quad j\in\{0,\ldots,k\},\\
\evbt_j(t,(\Sigma,u,\vec{z},\vec{w})):=(t,u(z_j)),
\end{gather*}
and
\begin{gather*}
\evit_j:\Mt_{k+1,l}(\beta)\lrarr I\times X, \quad j\in\{1,\ldots,l\},\\
\evit_j(t,(\Sigma,u,\vec{z},\vec{w})):=(t,u(w_j)).
\end{gather*}
As with the usual moduli spaces, we assume all $\Mt_{k+1,l}(\beta)$ are smooth orbifolds with corners, and $\evbt_0$ is a proper submersion.

\begin{ex}\label{ex:constJ}
In the special case when $J_t$ is a constant family, that is, $J_t=J$ for all $t\in I$, we have
\[
\Mt_{k+1,l}(\beta)=I\times\M_{k+1,l}(\beta;J).
\]
The evaluation maps in this case are $\evbt_j=\Id \times evb_j$ and $\evit_j=\Id\times evi_j.$ In particular, the smoothness assumptions for $\Mt_{k+1,l}(\beta)$ follow from the assumptions for $\M_{k+1,l}(\beta)$.

Even in this special case, we will see below that the moduli space $\Mt_{k+1,l}(\beta)$ allows one to prove that the $A_\infty$ algebra $(C,\mg_k)$ for a fixed $J$ is determined up to pseudoisotopy by the cohomology class of $\gamma.$
\end{ex}
\begin{rem}
The assumption that $\evbt_0$ is a submersion presumably imposes strong restrictions on the possible topology of $\Mt_{k+1,l}(\beta).$
In particular, this limits significantly the possible changes in $J_t$.
The main example is where $J_t=\varphi_t^*J$ for a one-parameter family of symplectomorphisms $\varphi_t$ and $J$ is as in Example~\ref{rem:assumptions} or Remark~\ref{rem:assumptionsgeneral}. See Example~\ref{ex:constJ} above for a special case. Using virtual cycle techniques should allow the extension of the theory to the general setting.
\end{rem}

Let
\[
p:I\times L\lrarr I,\qquad p_\M: \Mt_{k+1,l}(\beta)\lrarr I,
\]
denote the projections.

For all $\beta\in\sly$, $k,l\ge 0$,  $(k,l,\beta) \not\in\{ (1,0,\beta_0),(0,0,\beta_0)\}$, define
\[
\qt_{k,l}^{\beta}:\mC^{\otimes k}\otimes A^*(I\times X;Q)^{\otimes l}\lrarr \mC
\]
by
\[ \qt_{k,l}^{\beta}(\otimes_{j=1}^k\at_j;\otimes_{j=1}^l\gt_j):= (-1)^{\varepsilon(\at)}(\evbt_0)_*(\bigwedge_{j=1}^l\evit_j^*\gt_j \wedge\bigwedge_{j=1}^k\evbt_j^*\at_j)).
\]
For $l\ge 0$, $(l,\beta)\neq (1,\beta_0),(0,\beta_0)$, define
\[
\qt_{-1,l}^{\beta}:A^*(I\times X;Q)^{\otimes l}\lrarr A^*(I;Q)
\]
by
\[
\qt_{-1,l}^{\beta}(\otimes_{j=1}^l\gt_j):= (p_\M)_*\wedge_{j=1}^l\evit_j^*\gt_j.
\]
Define also
\[
\qt_{1,0}^{\beta_0}(\at)=d\at,\quad
\qt_{0,0}^{\beta_0}:=0,\quad \qt_{-1,1}^{\beta_0}:=0,\quad \qt_{-1,0}^{\beta_0}:=0.
\]
Denote by
\[
\qt_{k,l}:\mC^{\otimes k}\otimes A^*(I\times X;Q)^{\otimes l}\lrarr \mC,\hspace{3em}
\qt_{-1,l}:A^*(I\times X;Q)^{\otimes l}\lrarr \mathfrak{R},
\]
the sums over $\beta$:
\begin{gather*}
\qt_{k,l}(\otimes_{j=1}^k\at_j;\otimes_{j=1}^l\gt_j):=
\sum_{\beta\in \sly}
T^{\beta}\qt_{k,l}^{\beta}(\otimes_{j=1}^k\at_j;\otimes_{j=1}^l\gt_j),\\
\qt_{-1,l}(\otimes_{j=1}^l\gt_j):=\sum_{\beta\in \sly} T^{\beta}\qt_{-1,l}(\gt^l).
\end{gather*}
Lastly, define similar operations using spheres,
\[
\qt_{\emptyset,l}:A^*(I\times X;Q)^{\otimes l}\lrarr A^*(I\times X;R),
\]
as follows. For $\beta\in H_2(X;\Z)$ let
\[
\Mt_{l+1}(\beta):=\{(t,\uu)\;|\,\uu\in \M_{l+1}(\beta;J_t)\}.
\]
For $j=0,\ldots,l,$ let
\begin{gather*}
\evt_j^\beta:\Mt_{l+1}(\beta)\to I\times X,\\
\evt_j^\beta(t,(\Sigma,u,\vec{w})):=(t,u(w_j)),
\end{gather*}
be the evaluation maps. Assume that all the moduli spaces $\Mt_{l+1}(\beta)$ are smooth orbifolds and $\evt_0$ is a submersion. Recall that $w_{\s} \in H^2(X;\Z/2\Z)$ is the class with $w_2(TL) = i^* w_{\s}$ determined by the relative spin structure $\s$. For $l\ge 0$, $(l,\beta)\ne (1,0),(0,0)$, set
\begin{equation*}
\qt_{\emptyset,l}^\beta(\gt_1,\ldots,\gt_l):=
(-1)^{w_\s(\beta)}
(\evt_0^\beta)_*(\bigwedge_{j=1}^l(\evt_j^\beta)^*\gt_j)
\end{equation*}
and
\begin{equation*}
\qt_{\emptyset,1}^0:= 0,\qquad \qt_{\emptyset,0}^0:= 0.
\end{equation*}
Define
\[
\qt_{\emptyset,l}(\gt_1,\ldots,\gt_l):=
\sum_{\beta\in H_2(X)}T^{\pr(\beta)}\qt_{\emptyset,l}^\beta(\gt_1,\ldots,\gt_l).
\]

\begin{cl}\label{qt_rel}
For any fixed $\at=(\at_1,\ldots,\at_k)$, $\gt=(\gt_1,\ldots,\gt_l)$,
\begin{align*}
&0=\sum_{\substack{S_3[l]\\(2:3)=\{j\}}}
(-1)^{|\gamma^{(1:3)}|+1}\qt_{k,l}(\at;
\gt^{(1:3)}\otimes d\gt_j\otimes\gt^{(3:3)})+\mbox{}\\
&+\sum_{\substack{P\in S_3[k]\\I\sqcup J=[l]}}
(-1)^{\iota(\at,\gt;P,I)}
\qt_{|(1:3)|+|(3:3)|+1,|I|}(\at^{(1:3)}\otimes \qt_{|(2:3)|,|J|}(\at^{(2:3)};\gt^J)\otimes\at^{(3:3)};\gt^I).
\end{align*}
\end{cl}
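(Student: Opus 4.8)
The plan is to transcribe the proof of Proposition~\ref{q_rel} essentially verbatim, with $\M_{k+1,l}(\beta)$ replaced throughout by $\Mt_{k+1,l}(\beta)$ and the evaluation maps $evb_j, evi_j$ replaced by $\evbt_j,\evit_j$. Concretely, I would apply Stokes' theorem (Proposition~\ref{stokes}) to the proper submersion $f=\evbt_0:\Mt_{k+1,l}(\beta)\to I\times L$ and the form $\xi=\bigwedge_{j=1}^l\evit_j^*\gt_j\wedge\bigwedge_{j=1}^k\evbt_j^*\at_j$, and then identify the three resulting terms exactly as in Section~\ref{sssec:proof}. Writing $d=\qt_{1,0}^{\beta_0}$, the term $d(f_*\xi)$ together with the $d\at_i$ part of $f_*(d\xi)$ supplies the composition terms in which one of the two $\qt$-factors is an unstable ghost, which when adjoined to the fiberwise-boundary contribution complete the sum over $S_3[k]$; the remaining part of $f_*(d\xi)$ gives the $d\gt_j$ terms; and the fiberwise-boundary term $(f|_{\d M})_*\xi$ gives the nested terms $\qt_{|(1:3)|+|(3:3)|+1,|I|}(\at^{(1:3)}\otimes\qt_{|(2:3)|,|J|}(\at^{(2:3)};\gt^J)\otimes\at^{(3:3)};\gt^I)$.

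The one genuinely new ingredient is the structure of the fiberwise boundary of $\evbt_0$. Since $\evbt_0$ is a proper submersion onto $I\times L$, its fiberwise boundary consists only of the vertical boundary strata of $\Mt_{k+1,l}(\beta)$. The strata $\{t=0\}$ and $\{t=1\}$ map onto $\d(I\times L)=\{0,1\}\times L$ and are therefore horizontal; they do not contribute to Stokes' formula. This is precisely why the $\qt$-relations have the same shape as the $\q$-relations, with no correction coming from $\d I$, and it is what makes the relations close up over $\mR=A^*(I;R)$ (cf.\ Lemmas~\ref{lm:t_linear} and~\ref{lm:pseudo}). For $k\ge0$ the remaining boundary strata are, exactly as in Section~\ref{sssec:proof}, the loci $B$ where a disk bubbles off at a boundary marked point carrying $k_2$ boundary and $l_2$ interior marked points, and each such $B$ is identified by a canonical diffeomorphism
\[
\vartheta:\Mt_{k_1+1,l_1}(\beta_1)\prescript{}{\evbt_i^{\beta_1}}\times_{\evbt_0^{\beta_2}}\Mt_{k_2+1,l_2}(\beta_2)\stackrel{\sim}{\lrarr}B,
\]
with $k_1+k_2=k+1$, $l_1+l_2=l$, $\beta_1+\beta_2=\beta$.

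It then remains to observe that the signs are unchanged. The relative dimension satisfies
\[
\dim\fiber(\evbt_0)=\dim\Mt_{k+1,l}(\beta)-\dim(I\times L)=\bigl(\dim\M_{k+1,l}(\beta)+1\bigr)-(n+1)=\dim\fiber(evb_0),
\]
so Lemma~\ref{lm:deg_q} applies verbatim and every parity used in the sign bookkeeping of Proposition~\ref{q_rel} is the same here. The orientation change under $\vartheta$ is again $(-1)^\delta$ with $\delta$ as in~\eqref{eq:delta} and $i=|(1:3)|+1$: the $I$-direction is common to both factors, to the fiber product, and to $B$, so under the orientation conventions of Section~\ref{ssec:orinetation} it contributes nothing beyond what the closed-case computation already records. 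Hence the sign computation of Section~\ref{sssec:proof} carries over without change, and dividing by $(-1)^{\varepsilon(\at;\gt)}$ and summing over $\beta$ yields the stated identity.

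The main obstacle I anticipate is not the sign arithmetic --- which is literally the computation of Section~\ref{sssec:proof} --- but pinning down the two structural inputs: that $\{t=0\}$ and $\{t=1\}$ are horizontal rather than fiberwise boundary of $\evbt_0$ (so that Proposition~\ref{stokes} produces no $\d I$ term), and that the orientations on $\Mt_{k+1,l}(\beta)$ and on its bubble strata $B$ are compatible with the orientations on the fixed-$J_t$ moduli spaces in the precise way required both for the sign $\delta$ and for Lemma~\ref{lm:pseudo}. Once these are in place, the argument is a transcription.
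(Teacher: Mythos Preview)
Your proposal is correct and is exactly the approach the paper intends: the paper's own proof is the single sentence ``The proof is similar to that of Proposition~\ref{q_rel},'' and you have accurately identified the only nontrivial points in the transcription (that the $t=0,1$ strata are horizontal for $\evbt_0$ and hence absent from the fiberwise boundary in Proposition~\ref{stokes}, and that the relative fiber dimension and the bubbling sign $\delta$ are unchanged because the common $I$-factor contributes symmetrically). Nothing further is needed.
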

\begin{proof}
The proof is similar to that of Proposition~\ref{q_rel}. The gluing sign $\delta_1$ from Proposition~\ref{rem:gluingsign} becomes $\tilde\delta_1=\delta_1+1,$ and
the
contribution of $s=\dim M$ to the sign of Proposition~\ref{stokes} becomes $\dim \Mt_{k+1,l}(\beta) = \dim \M_{k+1,l}(\beta) + 1$, so the total computation of $\iota$ results in the same value.

\end{proof}

Define a pairing
\[
\ll\,,\,\gg:\mC\otimes\mC\lrarr \mathfrak{R}
\]
by
\[
\ll\tilde{\xi},\tilde{\eta}\gg:=(-1)^{|\etat|}p_*(\tilde{\xi}\wedge\tilde{\eta}).
\]
Note that
\begin{equation}\label{eq:pseudopair}
\ll\xit,\etat\gg=(-1)^{|\etat|}p_*(\xit\wedge\etat)
=(-1)^{|\etat|+|\etat|\cdot|\xit|}p_*(\etat\wedge\xit)
=(-1)^{(|\etat|+1)(|\xit|+1)+1}\ll\etat,\xit\gg.
\end{equation}
\begin{cl}\label{cl:qt_-1}
For any fixed $\gt=(\gt_1,\ldots,\gt_l)$,
\begin{align*}
-d\qt_{-1,l}(\gt)=&\sum_{(2:3)=\{j\}}(-1)^{|\gt^{(1:3)}|+1} \qt_{-1,l}(\gt^{(1:3)}\otimes d\gt_j\otimes\gt^{(3:3)})+\\
&+\frac{1}{2}\sum_{I\sqcup J=\{1,\ldots,l\}}
(-1)^{\iota(\gt;I)}
\ll\qt_{0,|I|}(\gt^I),\qt_{0,|J|}(\gt^J)\gg
+ (-1)^{|\gt|+1}p_* i^* \qt_{\emptyset,l}(\gt).
\end{align*}
\end{cl}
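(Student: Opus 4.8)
The plan is to mimic the proofs of Propositions~\ref{q_rel} and~\ref{q_-1_rel}, replacing the constant map $pt$ by the projection $p_\M\colon\Mt_{0,l}(\beta)\to I$, and hence the classical Stokes theorem by the fiberwise Stokes theorem, Proposition~\ref{stokes}. Fix $\beta$, set $\xi:=\bigwedge_{j=1}^l\evit_j^*\gt_j\in A^{|\gt|}(\Mt_{0,l}(\beta))$, and apply Proposition~\ref{stokes} to $f=p_\M$ and this $\xi$; here $p_\M$ is a proper submersion, which is already implicit in the very definition of $\qt_{-1,l}^\beta$ as a push-forward along $p_\M$. The term $d\big((p_\M)_*\xi\big)$ is, up to the overall normalization by $(-1)^{\varepsilon(\gt)}$ and the summation $\sum_\beta T^\beta$, exactly the term $d\qt_{-1,l}(\gt)$; this is precisely the contribution that is absent in Proposition~\ref{q_-1_rel}, where the fiber is a point and $d$ of a constant vanishes, and it is the reason the $\qt_{-1}$-relation has the shape of an equation of forms on $I$ rather than a numerical identity. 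The term $(p_\M)_*(d\xi)$, after distributing $d$ across the wedge exactly as in the ``second element'' step of the proof of Proposition~\ref{q_rel}, yields $\sum_{(2:3)=\{j\}}(-1)^{|\gt^{(1:3)}|}\qt_{-1,l}(\gt^{(1:3)}\otimes d\gt_j\otimes\gt^{(3:3)})$.

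It remains to analyze the fiberwise boundary term $\big(p_\M|_{\d\Mt_{0,l}(\beta)}\big)_*\xi$, where $\d$ is the fiberwise boundary with respect to $p_\M$. One should note that the loci $t=0$ and $t=1$ constitute the vertical boundary $p_\M^{-1}(\d I)$ and hence are \emph{not} part of $\d\Mt_{0,l}(\beta)$ in the sense of Proposition~\ref{stokes}, so the endpoints contribute nothing here directly. The fiberwise boundary decomposes, just as in the closed case, into strata where a disk bubbles off at the boundary circle and the stratum where the boundary circle collapses to an interior node, since for each fixed $t$ the boundary of $\M_{0,l}(\beta;J_t)$ has precisely these strata and they vary smoothly in $t$. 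For a disk-bubbling stratum $B$ indexed by $\beta_1+\beta_2=\beta$ and a partition $I\sqcup J=[l]$, there is a diffeomorphism
\[
\vartheta\colon\Mt_{1,l_1}(\beta_1)\,\prescript{}{\evbt_0}\times_{\evbt_0}\Mt_{1,l_2}(\beta_2)\stackrel{\sim}{\lrarr}B,
\]
changing orientation by $(-1)^n$ (equation~\eqref{eq:delta} with $i=k_1=k_2=0$, exactly as in Proposition~\ref{q_-1_rel}). Running the push--pull computation of~\eqref{eq:B}, but with the final $pt_*$ replaced by $p_*\colon I\times L\to I$ and $\langle\,,\,\rangle$ replaced by $\ll\,,\,\gg$, this stratum contributes $\pm\ll\qt_{0,l_1}^{\beta_1}(\gt^I),\qt_{0,l_2}^{\beta_2}(\gt^J)\gg$; summing over all such strata and over $\beta_1+\beta_2=\beta$, with the factor $\tfrac12$ accounting for the order of the two bubbles, gives the $\ll\,,\,\gg$ term of the statement. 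The collapsing stratum, under the comparison with the sphere moduli space $\Mt_{l+1}(\beta)$, contributes $\pm p_*i^*\qt_{\emptyset,l}^\beta(\gt)$, by the same reasoning as in the closed case. Assembling the four contributions into the identity of Proposition~\ref{stokes}, summing $\sum_\beta T^\beta$, and dividing by the overall sign produces the asserted relation.

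The main obstacle is sign bookkeeping. One must check that the factor $(-1)^{s+t}$ of Proposition~\ref{stokes}, with $s=\dim\Mt_{0,l}(\beta)-1$ and $t=|\gt|$, combines with the orientation sign $(-1)^n$ of $\vartheta$ and the signs built into $\ll\,,\,\gg$ to reproduce the stated sign $(-1)^{\sigma^{\gt}_{I\cup J}+|\gt_I|+n}$ on the $\ll\,,\,\gg$ term, as well as the correct sign on the collapsing term. These are the same computations as in Propositions~\ref{q_rel} and~\ref{q_-1_rel}, carried out now with the fiber dimension larger by one, so I expect them to go through essentially verbatim once the parity of the fiber dimension is shifted accordingly (Lemma~\ref{lm:deg_q} plays the same role as before). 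The only genuinely new bookkeeping is matching the sign of the $d\qt_{-1,l}(\gt)$ term, but this is fixed by the same overall normalization that is used to write all the remaining terms in terms of the $\qt$-operators.
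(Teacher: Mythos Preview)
Your approach is correct and matches the paper's own proof, which simply says the argument is ``similar to that of Proposition~\ref{q_-1_rel} but uses the generalization of Stokes' theorem given in Proposition~\ref{stokes}.'' One small clarification on your sign discussion: the relative dimension of $p_\M\colon\Mt_{0,l}(\beta)\to I$ is $\dim\Mt_{0,l}(\beta)-1=\dim\M_{0,l}(\beta)$, which coincides with the relative dimension of $pt\colon\M_{0,l}(\beta)\to pt$; so the parity entering the $(-1)^{s+t}$ factor of Proposition~\ref{stokes} is literally unchanged from the untilded case, and no shift is needed.
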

\begin{proof}
The proof uses the generalization of Stokes' theorem given in Proposition~\ref{stokes} applied to
\[
f:=p_\M:\Mt_{0,l}(\beta)\lrarr I,
\qquad
\xit:=\bigwedge_{j=1}^l\evit_j^*\gt_j,
\]
in a way similar to the proof of Proposition~\ref{q_rel}.

\textit{Contribution from $d(f_*\xit)$.}
By definition,
\[
d(f_*\xit)=d\qt_{-1,l}^\beta(\gt).
\]

\textit{Contribution from $f_*(d\xit)$.}
Again, by definition,
\[
f_*(d\xit)=\sum_{(2:3)=\{j\}}(-1)^{|\gt^{(1:3)}|} \qt_{-1,l}^\beta(\gt^{(1:3)}\otimes d\gt_j\otimes \gt^{(3:3)}).
\]

\textit{Contributions from $(f|_{\d\Mt})_*\xit$ -- first type (disk bubbling).}
Let $B\subset \d\Mt_{0,l}(\beta)$ be a boundary component
of the type described in Lemma~\ref{lm:q-1diskbd}.
Note that the gluing sign corresponding to~\eqref{eq:delta} in this case is
$
\tilde{\delta}_1:=n+1.
$
Similarly to Lemma~\ref{lm:q-1diskbd}, we find that
\[
(p_{\M}|_B)_*\xit
=
(-1)^{n+1+sgn(\sigma^{\gt}_{I\sqcup J}) +|\gt^J|}
\ll\qt^{\beta_1}_{0,|I|}(\gt^I), \qt^{\beta_2}_{0,|J|}(\gt^J)\gg.
\]
The contribution to Stokes' theorem is therefore
\begin{align*}
(-1)^{s+t}(f|_B)_*\xit&= (-1)^{|\gt|+n+n+sgn(\sigma^{\gt}_{I\sqcup J})+|\gt^J|+1}\sum_{I\sqcup J=[l]} \ll\qt_{0,|I|}^{\beta_1}(\gt^I),\qt_{0,|J|}^{\beta_2}(\gt^J)\gg\\
&=
(-1)^{sgn(\sigma^{\gt}_{I\sqcup J})+|\gt^I|+1}\sum_{I\sqcup J=[l]} \ll\qt_{0,|I|}^{\beta_1}(\gt^I),\qt_{0,|J|}^{\beta_2}(\gt^J)\gg\\
&=
(-1)^{\iota(\gt;I)+1}\sum_{I\sqcup J=[l]} \ll\qt_{0,|I|}^{\beta_1}(\gt^I),\qt_{0,|J|}^{\beta_2}(\gt^J)\gg.
\end{align*}

\textit{Contributions from $(f|_{\d\Mt})_*\xit$ -- second type (sphere bubbling from a ghost disk).}
Let $B\subset \d\Mt_{0,l}(\beta)$ be a boundary component
of the type described in Proposition~\ref{rem:pintgluing}.
Note that the gluing sign in this case is $(-1)^{n+w_\s(\beta)}$.
Similarly to Lemma~\ref{lm:starofDtype}, we find that
\[
(f|_B)_*\xit=
(p_\M)_*\xit
=
(-1)^{n}
p_*i^*\qt_{\emptyset,l}^{\hat\beta}(\gt).
\]

The total contribution to Stokes' theorem is therefore
\begin{align*}
(-1)^{s+t}(f|_B)_*\xit&= (-1)^{|\gt|+n+n}p_*i^*\qt_{\emptyset,l}^{\hat\beta}(\gt)\\
&=
(-1)^{|\gt|}p_*i^*\qt_{\emptyset,l}^{\hat\beta}(\gt).
\end{align*}

\end{proof}

For each closed $\gt\in \mI_Q\mD$ with $|\gt|=2,$ define structure maps
\[
\mt^{\gt,\beta}_k,\mgt_k:\mC^{\otimes k}\lrarr \mC
\]
by
\begin{gather*}
\m^{\gt,\beta}_k(\otimes_{j=1}^k\at_j):=\sum_{l}
\frac{1}{l!}\;\qt^\beta_{k,l}(\otimes_{j=1}^k\at_j;\gt^{\otimes l}),\\
\mgt_k(\otimes_{j=1}^k\at_j):=\sum_{l}
\frac{1}{l!}\;\qt_{k,l}(\otimes_{j=1}^k\at_j;\gt^{\otimes l}),
\end{gather*}
and define
\[
\mgt_{-1}:=\sum_{l}\frac{1}{l!}\;\qt_{-1,l}(\gt^{\otimes l})\in \mR.
\]
Denote
\begin{equation}\label{eq:GWT}
\widetilde{GW}:=\sum_{l\ge 0} \frac{1}{l!}p_* i^* \qt_{\emptyset,l}(\gt^{\otimes l}).
\end{equation}

\begin{cl}\label{cl:mgt_str}
The maps $\mgt$ define an $A_\infty$ structure on $\mC$. That is,
\[
\sum_{\substack{k_1+k_2=k+1\\k_1,k_2\ge 0\\ 1\le i\le k_1}}
(-1)^{\sum_{j=1}^{i-1}(|\at_j|+1)}
\mgt_{k_1}(\at_1,\ldots,\at_{i-1},\mgt_{k_2}(\at_i,\ldots,\at_{i+k_2-1}),\at_{i+k_2},\ldots,\at_k)=0
\]
for all $\at_j\in\mC$.
\end{cl}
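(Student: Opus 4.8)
The plan is to deduce Proposition~\ref{cl:mgt_str} from the $\qt$-relations of Proposition~\ref{qt_rel} in exactly the way that Proposition~\ref{cl:a_infty_m} is deduced from Proposition~\ref{q_rel}. Fix a closed $\gt \in \mI_Q\mD$ with $\deg_{\mD}\gt = 2$. For each $l \geq 0$ I would substitute $\gt_1 = \cdots = \gt_l = \gt$ into the identity of Proposition~\ref{qt_rel}. Because $d\gt = 0$, the first sum (over $S_3[l]$ with $(2:3)=\{j\}$, the one containing $d\gt_j$) vanishes term by term, so only the quadratic sum over $S_3[k]$ and partitions $I \sqcup J = [l]$ survives.

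Next I would simplify the sign $\iota(\at,\gt;i,I)$ under this specialization. Since $\deg_D\gt = 2$ is even, $sgn(\sigma^{\gt}_{I\cup J}) \equiv 0$, $\sum_{j\in J}\deg_D\gt \cdot \sum_{j\in(1:3)}(\deg_C\at_j+1)\equiv 0$, and $\sum_{j\in I}\deg_D\gt \equiv 0 \pmod 2$, whence $\iota(\at,\gt;i,I) \equiv \sum_{j\in(1:3)}(\deg_C\at_j+1) \equiv \sum_{j=1}^{i-1}(|\at_j|+1) \pmod 2$, where $i = |(1:3)|+1$ is the position of the inner operator; this is precisely the sign in the $A_\infty$ relation. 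Note also that in Proposition~\ref{qt_rel} the inner operator $\qt_{|(2:3)|,|J|}$ always has first index $|(2:3)| \geq 0$ and the outer one first index $\geq 1$, so $\qt_{-1}$ never appears, and the disk-bubbling exclusions are already absorbed via $\qt_{1,0}^{\beta_0}=d$; thus $\mgt_{-1}$ plays no role here.

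It remains to carry out the combinatorial resummation. Multiply the specialized identity by $\frac{1}{l!}$ and sum over $l \geq 0$. For a fixed $S_3[k]$ and fixed sizes $l_1 = |I|$, $l_2 = |J|$ with $l_1+l_2 = l$, there are $\binom{l}{l_1}$ ordered partitions of $[l]$ with these sizes, and---all interior inputs being $\gt$---each yields the same summand; using $\frac{1}{l!}\binom{l}{l_1} = \frac{1}{l_1!\,l_2!}$ together with linearity and continuity of $\qt_{k_1,l_1}$ in its arguments, the sum over $(l, I, J)$ becomes an independent double sum over $(l_1,l_2)$ and collapses to
\[
\sum_{S_3[k]}(-1)^{\sum_{j=1}^{i-1}(|\at_j|+1)}\,\mgt_{k_1}\bigl(\at^{(1:3)}\otimes\mgt_{k_2}(\at^{(2:3)})\otimes\at^{(3:3)}\bigr),\qquad k_1 = |(1:3)|+|(3:3)|+1,\ \ k_2 = |(2:3)|,
\]
which is precisely the left-hand side of the asserted $A_\infty$ relation; by Proposition~\ref{qt_rel} it vanishes. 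The one nonformal point is convergence: since $\gt \in \mI_Q\mD$ has positive valuation, $\nu\bigl(\qt_{k,l}(\,\cdot\,;\gt^{\otimes l})\bigr) \to \infty$ as $l \to \infty$, so each $\mgt_k$ is well defined in the $\nu$-completed tensor product and the rearrangement of sums is legitimate. I do not expect any genuine obstacle: the argument is bookkeeping of signs and multinomial coefficients entirely parallel to the closed-form case of Proposition~\ref{cl:a_infty_m}, the substantive content having already been established in Proposition~\ref{qt_rel}.
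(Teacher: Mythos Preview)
Your proposal is correct and takes essentially the same approach as the paper: the paper's proof is the single line ``Since $d\gt=0,$ this is a special case of Proposition~\ref{qt_rel},'' and you have simply spelled out the sign simplification, the multinomial resummation, and the convergence check that make this specialization go through. Your added detail is accurate and matches the parallel reduction of Proposition~\ref{cl:a_infty_m} to Proposition~\ref{q_rel}.
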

\begin{proof}
Since $d\gt=0$ and $|\gt|=2$, this is a special case of Proposition~\ref{qt_rel}.
\end{proof}

\subsection{Properties}
The properties formulated for the $\q$-operators can be equally well formulated for the $\qt$-operators, with similar proofs. Below we discuss them explicitly, and add a few properties that are specific to the pseudoisotopy context.

\subsubsection{Linearity}

Observe that $\mC$ is an $\mR$ module with the action
\[
f\cdot\a:=p^*f\wedge \a, \qquad f\in \mR, \quad \a\in \mC.
\]
Similarly, let $p_X:I\times X\to I$ be the projection. Then $\mR$ acts on $A^*(I\times X;Q)$ via
\[
f\cdot \gamma:=p_X^*f\wedge \gamma, \qquad f\in \mR, \quad \gamma\in A^*(I\times X;Q).
\]

\begin{cl}\label{lm:t_linear}
The operations $\qt$ are $\mR$-multilinear in the sense that for $f\in \mR,$
\begin{multline*}
\qt_{k,l}^\beta(\at_1,\ldots,\at_{i-1},f\cdot\at_i,\ldots,\at_k;\gt_1,\ldots,\gt_l)=\\
		=(-1)^{|f|\cdot\big(i+\sum_{j=1}^{i-1}|\at_j|+\sum_{j=1}^l|\gt_j|\big)}
		f\cdot\qt_{k,l}^\beta(\at_1,\ldots,\at_k;\gt_1,\ldots,\gt_l)+\delta_{1,k}\cdot df\cdot\at_1,
\end{multline*}
and for $f\in A^*(I;Q),$
\[
\qt_{k,l}^\beta(\at_1,\ldots,\at_k;\gt_1,\ldots,f\cdot\gt_i,\ldots,\gt_l)
=(-1)^{|f|\cdot\sum_{j=1}^{i-1}|\gt_j|}
		f\cdot\qt_{k,l}^\beta(\at_1,\ldots,\at_k;\gt_1,\ldots,\gt_l),
\]
and
\[
\qt_{\emptyset,l}^\beta(\gt_1,\ldots,f\cdot\gt_i,\ldots,\gt_l)
=(-1)^{|f|\cdot\sum_{j=1}^{i-1}|\gt_j|}f\cdot\qt^\beta_{\emptyset,l}(\gt_1,\ldots,\gt_l).
\]
In addition, the pairing $\ll\;,\,\gg$ is $\mR$-bilinear in the sense of Definition~\ref{dfn:cycunit}\eqref{it:plin}.
\end{cl}
\begin{proof}

For $\qt_{1,0}^{\beta_0}=d$ we have
\[
d(f.\at)=d(p^*f\wedge \at)=d(p^*f)\wedge\at+(-1)^{|f|}p^*f\wedge d\at
=(df).\at+(-1)^{|f|}f.d\at.
\]
For $\qt_{k,l}^\beta$ with $(k,l,\beta)\ne(1,0,\beta_0),$ we have
\begin{align*}
(\evbt_0)_*(\bigwedge_{j=1}^l&\evit_j^*\gt_j\wedge\bigwedge_{j=1}^{i-1}\evbt_j^*\at_j\wedge\evbt_i^*(p^*f\wedge\at_i)\wedge\bigwedge_{j=i+1}^k\evbt_j^*\at_j)=\\
&=(\evbt_0)_*(\bigwedge_{j=1}^l\evit_j^*\gt_j\wedge\bigwedge_{j=1}^{i-1}\evbt_j^*\at_j\wedge(p\circ\evbt_i)^*f\wedge\bigwedge_{j=i}^k\evbt_j^*\at_j)\\
&=(-1)^{|f|\cdot\big(\sum_{j=1}^{i-1}|\at_j|+\sum_{j=1}^l|\gt_j|\big)}(\evbt_0)_*((p\circ\evbt_i)^*f\wedge\bigwedge_{j=1}^l\evit_j^*\gt_j\wedge\bigwedge_{j=1}^k\evbt_j^*\at_j)\\
&=(-1)^{|f|\cdot\big(\sum_{j=1}^{i-1}|\at_j|+\sum_{j=1}^l|\gt_j|\big)}(\evbt_0)_*((p\circ\evbt_0)^*f\wedge\bigwedge_{j=1}^l\evit_j^*\gt_j\wedge\bigwedge_{j=1}^k\evbt_j^*\at_j)\\
&=(-1)^{|f|\cdot\big(\sum_{j=1}^{i-1}|\at_j|+\sum_{j=1}^l|\gt_j|\big)}(p^*f)\wedge(\evbt_0)_*(\bigwedge_{j=1}^l\evit_j^*\gt_j\wedge\evbt_1^*\at_1\wedge\bigwedge_{j=2}^k\evbt_j^*\at_j).
\end{align*}
Taking into consideration the sign $\varepsilon(\at),$ we see that
\[
\qt_{k,l}(\at_1,\ldots,f\cdot\at_i,\ldots,\at_k;\gt_1,\ldots,\gt_l)=
(-1)^{|f|\cdot\big(i+\sum_{j=1}^{i-1}|\at_j|+\sum_{j=1}^l|\gt_j|\big)}f\qt_{k,l}(\at_1,\ldots,\at_k;\gt_1,\ldots,\gt_l).
\]

The second equality for $k\ge 0$ follows from
\begin{align*}
(\evbt_0)_*(\bigwedge_{j=1}^{i-1}&\evit_j^*\gt_j\wedge \evit_i^*(p_X^*f\wedge\gt_i)\wedge\bigwedge_{j=i+1}^l\evit_j^*\gt_j\wedge\bigwedge_{j=1}^k\evbt_j^*\at_j)=\\
=&(-1)^{|f|\cdot\sum_{j=1}^{i-1}|\gt_j|}(\evbt_0)_*(\evit_i^*p_X^*f\wedge\bigwedge_{j=1}^l\evit_j^*\gt_j\wedge\bigwedge_{j=1}^k\evbt_j^*\at_j)\\
=&(-1)^{|f|\cdot\sum_{j=1}^{i-1}|\gt_j|}(\evbt_0)_*((p_X\circ \evit_j)^*f\wedge\bigwedge_{j=1}^l\evit_j^*\gt_j\wedge\bigwedge_{j=1}^k\evbt_j^*\at_j)\\
=&(-1)^{|f|\cdot\sum_{j=1}^{i-1}|\gt_j|}(\evbt_0)_*((p\circ\evbt_0)^*f \wedge\bigwedge_{j=1}^l\evit_j^*\gt_j\wedge\bigwedge_{j=1}^k\evbt_j^*\at_j)\\
=&(-1)^{|f|\cdot\sum_{j=1}^{i-1}|\gt_j|}(p^*f)\wedge(\evbt_0)_*(\bigwedge_{j=1}^l\evit_j^*\gt_j\wedge\bigwedge_{j=1}^k\evbt_j^*\at_j),
\end{align*}
while $\varepsilon$ is not affected. For $k=-1,$ note that $p_\M=p_X\circ  \evit_i$. So,
\begin{align*}
(-1)^{|f|\cdot\sum_{j=1}^{i-1}|\gt_j|}(p_\M)_*&((p_X\circ \evit_j)^*f\wedge\bigwedge_{j=1}^l\evit_j^*\gt_j\wedge \bigwedge_{j=1}^k\evbt_j^*\at_j)=\\
=&(-1)^{|f|\cdot\sum_{j=1}^{i-1}|\gt_j|}(p_\M)_*(p_\M^*f\wedge\bigwedge_{j=1}^l\evit_j^*\gt_j\wedge\bigwedge_{j=1}^k\evbt_j^*\at_j)\\
=&(-1)^{|f|\cdot\sum_{j=1}^{i-1}|\gt_j|}f\wedge(p_\M)_*(\bigwedge_{j=1}^l\evit_j^*\gt_j\wedge\bigwedge_{j=1}^k\evbt_j^*\at_j),
\end{align*}
and again the required equality follows.

For $\qt_{\emptyset,l}^\beta,$ we have
\begin{align*}
\qt_{\emptyset,l}^\beta(\gt_1,\ldots,&f\cdot\gt_i,\ldots,\gt_l)=
(-1)^{w_\s(\beta)}
(\evt_0^\beta)_*(\bigwedge_{j=1}^{i-1}(\evt_j^\beta)^*\gt_j\wedge (\evt_i^\beta)^*(p_X^*f\wedge\gt_i)\wedge\bigwedge_{j=i+1}^l (\evt_j^\beta)^*\gt_j)\\
=&(-1)^{w_\s(\beta)+|f|\cdot\sum_{j=1}^{i-1}|\gt_j|} (\evt_0^\beta)_*((p_X\circ\evt_i^\beta)^*f\wedge\bigwedge_{j=1}^{l}(\evt_j^\beta)^*\gt_j)\\
=&(-1)^{w_\s(\beta)+|f|\cdot\sum_{j=1}^{i-1}|\gt_j|} (\evt_0^\beta)_*((p_X\circ\evt_0^\beta)^*f\wedge\bigwedge_{j=1}^{l}(\evt_j^\beta)^*\gt_j)\\
=&(-1)^{w_\s(\beta)+|f|\cdot\sum_{j=1}^{i-1}|\gt_j|} (\evt_0^\beta)_*((\evt_0^\beta)^*p_X^*f\wedge\bigwedge_{j=1}^{l}(\evt_j^\beta)^*\gt_j)\\
=&(-1)^{w_\s(\beta)+|f|\cdot\sum_{j=1}^{i-1}|\gt_j|} p_X^*f\wedge(\evt_0^\beta)_*\big(\bigwedge_{j=1}^{l}(\evt_j^\beta)^*\gt_j\big)\\
=&(-1)^{w_\s(\beta)+|f|\cdot\sum_{j=1}^{i-1}|\gt_j|} f\cdot\qt_{\emptyset,l}^\beta(\gt_1,\ldots,\gt_l).
\end{align*}

For the pairing, compute
\begin{multline*}
\;\;\ll p^*f\wedge\at_1,\at_2\gg=(-1)^{|\at_2|}p_*(p^*f\wedge\at_1\wedge\at_2)
=(-1)^{|\at_2|}f\wedge p_*(\at_1\wedge\at_2)=f\wedge\ll\at_1,\at_2\gg,\\
\ll\at_1,p^*f\wedge\at_2\gg
=(-1)^{|f|+|\at_2|+|f|\cdot|\at_1|}p_*(p^*f\wedge\at_1\wedge\at_2)
=(-1)^{|f|+|\at_2|+|f|\cdot|\at_1|}f\wedge p_*(\at_1\wedge\at_2)\\
=(-1)^{|f|\cdot(1+|\at_1|)}f\wedge\ll\at_1,\at_2\gg.
\end{multline*}

\end{proof}

\subsubsection{Pseudoisotopy}
For $t\in I$ and $M=pt,L,X,$ denote by $j_t:M\hookrightarrow I\times M$ the inclusion $p\mapsto (t,p)$. Denote by $\qkl^t$ the $\q$-operators associated to the complex structure $J_t$.
\begin{cl}\label{lm:pseudo}
For $t\in I$, we have
\[
j_t^*\qt_{k,l}(\at_1,\ldots,\at_k;\gt_1,\ldots,\gt_l)=
\qkl^t(j_t^*\at_1,\ldots,j_t^*\at_k;j_t^*\gt_1,\ldots,j_t^*\gt_l).
\]
\end{cl}
\begin{proof}
Consider the pull-back diagrams
\[
\xymatrix{
{\M_{k+1,l}(\beta;J_t)}\ar[r]^{j_t}\ar[d]^{evb_i}&
{\Mt_{k+1,l}(\beta)}\ar[d]^{\evbt_i}\\
{L}\ar[r]^{j_t}&I\times L
},\qquad
\xymatrix{
{\M_{k+1,l}(\beta;J_t)}\ar[r]^{j_t}\ar[d]^{evi_i}&
{\Mt_{k+1,l}(\beta)}\ar[d]^{\evit_i}\\
{X}\ar[r]^{j_t}&I\times X
}.
\]
By property~\eqref{prop:pushfiberprod} of integration, we have
\begin{align*}
j_t^*(\evbt_0)_*(\bigwedge_{i=1}^l\evit_i^*\gt_i\wedge\bigwedge_{i=1}^k\evbt_i^*\at_i) &=
(evb_0)_*(j_t)^*(\bigwedge_{i=1}^l\evit_i^*\gt_i\wedge\bigwedge_{i=1}^k\evbt_i^*\at_i)\\
&= (evb_0)_*(\bigwedge_{i=1}^levi_i^*j_t^*\gt_i\wedge\bigwedge_{i=1}^kevb_i^*j_t^*\at_i).
\end{align*}
\end{proof}

The next result relates the cyclic structure $\ll\;,\,\gg$ on $\mC$ with $\langle\;,\,\rangle$ on $C$.

\begin{cl}\label{lm:pseudoprod}
For $t\in I,$ we have
\[
j_t^*\ll\at_1,\at_2\gg
=
\langle j_t^*\at_1,j_t^*\at_2\rangle.
\]
\end{cl}
\begin{proof}
Consider the pullback diagram
\[
\xymatrix{
{L}\ar[r]^{j_t}\ar[d]^{pt}&
{I\times L}\ar[d]^{p}\\
{\{t\}}\ar[r]^{j_t}&I
}
\]
By property~\eqref{prop:pushfiberprod} of integration, we have
\begin{align*}
j_t^*\ll\at_1,\at_2\gg
=
(-1)^{|\at_2|}
j_t^*p_*(\at_1\wedge\at_2)
=
(-1)^{|\at_2|}
pt_*(j_t^*\at_1\wedge j_t^*\at_2)
=
\langle j_t^*\at_1,j_t^*\at_2\rangle.
\end{align*}

\end{proof}

\begin{lm}\label{lm:d_ll_gg}
For any $\xit,\etat\in \mC,$
\[
(-1)^{|\xit|+|\etat|+n}\int_I d\ll\xit,\etat\gg=\langle j_1^*\xit,j_1^*\etat\rangle- \langle j_0^*\xit,j_0^*\etat\rangle.
\]
\end{lm}
\begin{proof}
By Proposition~\ref{lm:pseudoprod} and Stokes' theorem, Proposition~\ref{stokes}, we have
\begin{align*}
\langle j_1^*\xit,j_1^*\etat\rangle- \langle j_0^*\xit,j_0^*\etat\rangle&=
j_1^*\ll\xit,\etat\gg-j_0^*\ll\xit,\etat\gg\\
=&\int_{\d I}\ll\xit,\etat\gg\\
=&(-1)^{|\xit|+|\etat|+n}\int_Id\ll\xit,\etat\gg.
\end{align*}

\end{proof}

\subsubsection{Unit of the algebra}
\begin{cl}\label{cl:qt_unit}
Let $f\in A^0(I\times L)\otimes R$, $\at_1,\ldots,\at_{k}\in \mC,$ and $\gt_1,\ldots,\gt_l \in A^*(I\times X;Q).$ Then
\[
\qt_{\,k\!,l}^{\beta} (\at_1,\ldots,\at_{i-1},f,\at_{i},\ldots,\at_{k-1} ;\otimes_{r=1}^l\gt_r)=
\begin{cases}
df, & (k,\;l,\beta)=(1,0,\beta_0),\\
(-1)^{|f|}f\cdot\at_1, & (k,l,\beta)=(2,0,\beta_0),\\
&\hspace{5em} i=1,\\
(-1)^{|\at_1|(|f| + 1)}f\cdot\at_1, & (k,l,\beta)=(2,0,\beta_0),\\
&\hspace{5em} i=2,\\
0,& \text{otherwise.}
\end{cases}
\]
In particular, $1\in A^0(I\times L)$ is a strong unit for the $A_\infty$ operations $\mgt$:
\[
\mgt_{k} (\at_1,\ldots,\at_{i-1},1,\at_{i},\ldots,\at_{k-1})=
\begin{cases}
0, & k\ge 3 \mbox{ or } k=1,\\
\at_1, & k=2,\: i=1,\\
(-1)^{|\at_1|}\at_1, & k=2,\: i=2.
\end{cases}
\]
\end{cl}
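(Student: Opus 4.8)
The plan is to mirror the proof of Proposition~\ref{cl:unit} almost verbatim, since the only change is that the moduli spaces $\M_{k+1,l}(\beta)$ and evaluation maps $evb_j,evi_j$ are replaced by their pseudo-isotopy counterparts $\Mt_{k+1,l}(\beta)$ and $\evbt_j,\evit_j$, and these satisfy the same structural assumptions (smooth orbifold with corners, $\evbt_0$ a proper submersion). First I would introduce the forgetful map $\pi:\Mt_{k+2,l}(\beta)\to\Mt_{k+1,l}(\beta)$ that drops the $i$-th boundary marked point and stabilizes; it is defined precisely when $(k+1,l,\beta)\neq(2,0,\beta_0)$. Since a family of complex structures $\{J_t\}$ is fixed, forgetting a boundary point is done fiberwise over $I$, so $\pi$ is well-defined and the compatibility relations $\evit_j^{k+2}=\evit_j^{k+1}\circ\pi$ and $\evbt_j^{k+2}=\evbt_j^{k+1}\circ\pi$ (with the index shift at $j>i$) hold exactly as in the closed-string case.

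Next, with $\xi:=\bigwedge_{j=1}^l(\evit_j^{k+1})^*\gt_j\wedge\bigwedge_{j=1}^k(\evbt_j^{k+1})^*\at_j$ and $g:=(\evbt_i^{k+2})^*f$, I would write
\[
\pm\,\qt_{\,k+1,l}(\at_1,\ldots,f,\ldots,\at_k;\otimes_r\gt_r)
=(\evbt_0^{k+2})_*(g\wedge\pi^*\xi)
=(\evbt_0^{k+1})_*(\pi_*g\wedge\xi),
\]
using property~\eqref{prop:pushcomp} together with property~\eqref{prop:pushpull} (in the currential sense, since $\pi$ need not be a submersion). The degree count $\dim\Mt_{k+2,l}(\beta)>\dim\Mt_{k+1,l}(\beta)$ and $|g|=0$ force $|\pi_*g|<0$, hence $\pi_*g=0$ and the expression vanishes. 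This disposes of every case except $(k+1,l,\beta)=(1,0,\beta_0)$, which holds by definition ($\qt_{1,0}^{\beta_0}(\at)=d\at$), and $(k+1,l,\beta)=(2,0,\beta_0)$.

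For the exceptional case $(k+1,l,\beta)=(2,0,\beta_0)$, the stable maps are constant, so $\evbt_0=\evbt_1=\evbt_2$, and the evaluation map $\evbt_0$ identifies $\Mt_{2,0}(\beta_0)$ with $I\times L$ (the space of stable disks with two boundary points and no interior points being a point), orientation-preservingly. The sign bookkeeping is identical to that in Proposition~\ref{cl:unit}: one reads off $\qt_{2,0}^{\beta_0}(f,\at)=f\at$ and $\qt_{2,0}^{\beta_0}(\at,f)=(-1)^{|\at|}f\at$ from the definition of $\varepsilon(\at;\gt)$. Finally, the statement about $\mgt_{k+1}$ follows by summing $\frac{1}{l!}\qt_{k+1,l}(\ldots;\gt^{\otimes l})$ over $l$, since $1\in A^0(I\times L)$ has the prescribed behavior at each level. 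I do not anticipate a genuine obstacle here: the only point requiring a word of care is the currential push-forward $\pi_*g$, but the degree argument makes it vanish trivially, exactly as in the non-parametrized case; everything else is a transcription of the proof of Proposition~\ref{cl:unit} with tildes added.
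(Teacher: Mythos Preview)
Your approach is correct and is precisely what the paper does; the paper itself merely remarks that the proof is analogous to that of Proposition~\ref{cl:unit}. One small indexing slip to fix: in the exceptional case $(k+1,l,\beta)=(2,0,\beta_0)$ the relevant moduli space is $\Mt_{3,0}(\beta_0)$ (with boundary marked points $z_0,z_1,z_2$), not $\Mt_{2,0}(\beta_0)$---a constant disk with only two boundary marked points is not stable---so the correct statement is that the space of stable disks with \emph{three} boundary marked points is a point, whence $\evbt_0$ identifies $\Mt_{3,0}(\beta_0)$ with $I\times L$.
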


\begin{proof}

Repeat the proof of Proposition~\ref{cl:unit} with $\Mt$, $\evit_j$, $\evbt_j$, and $\qt$, instead of $\M$, $evi_j$, $evb_j$, and $\q$, respectively. In the case $(k,l,\beta)=(2,0,\beta),$ the map $\evbt_0$ gives an orientation preserving identification of $\Mt_{3,0}(\beta_0)$ with $I\times L$, and the rest of the computation is again the same.

\end{proof}

\subsubsection{Cyclic structure}
\begin{cl}\label{cl:qt_cyclic}
The $\qt$ are cyclic with respect to the inner product $\ll\;,\,\gg.$ That is,
\begin{multline*}
\ll\qt_{k,l}(\at_1,\ldots,\at_k;\gt_1,\ldots\gt_l),\at_{k+1}\gg=\\
=(-1)^{(|\at_{k+1}|+1)\sum_{j=1}^{k}(|\at_j|+1)}\cdot
\ll \qt_{k,l}(\at_{k+1},\at_1,\ldots,\at_{k-1};\gt_1,\ldots,\gt_l),\at_k\gg
+\delta_{1,k}\cdot d\ll\at_1,\at_2\gg.
\end{multline*}
In particular,
\[
\ll d\at_1,\at_2\gg
=
d\ll\at_1,\at_2\gg+(-1)^{(|\at_1|+1)(|\at_2|+1)}\ll d\at_2,\at_1\gg.
\]
\end{cl}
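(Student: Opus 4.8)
The plan is to run the proof of Proposition~\ref{cl:cyclic} essentially verbatim, with $pt\colon L\to pt$ replaced by the projection $p\colon I\times L\to I$ and the evaluation maps $evb_j,evi_j$ replaced by $\evbt_j,\evit_j$. The one structural feature of that proof which must survive the translation is the identity $pt\circ evb_j=pt$ for every $j$; its analogue here is that
\[
p\circ\evbt_j=p_\M\colon\Mt_{k+1,l}(\beta)\lrarr I
\]
is independent of $j\in\{0,\dots,k\}$, since $\evbt_j(t,[u,\vec z,\vec w])=(t,u(z_j))$. Granting this, the chain of manipulations in Proposition~\ref{cl:cyclic} goes through with identical signs: express $\ll\qt_{k,l}(\at;\gt),\at_{k+1}\gg$ through $p_*$, pull $\at_{k+1}$ back along $\evbt_0$ using property~\eqref{prop:pushpull}, replace $p_*(\evbt_0)_*$ by $p_*(\evbt_k)_*$ via property~\eqref{prop:pushcomp} together with the identity above, push $\at_k$ out along $\evbt_k$, cyclically relabel the boundary marked points $z_0,\dots,z_k$ (a diffeomorphism of $\Mt_{k+1,l}(\beta)$ acting trivially in the $I$-direction, hence producing the same orientation sign as before), and reassemble the result as $\qt_{k,l}(\at_{k+1},\at_1,\dots,\at_{k-1};\gt)$. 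Nowhere in this argument is the exterior differential or Stokes' theorem invoked, so no correction term appears: this proves the identity for all $k\ge2$, and for $k=1$ it accounts for the contribution of every $\beta\ne\beta_0$.

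The one piece not covered by the preceding argument is the summand $k=1$, $l=0$, $\beta=\beta_0$, where $\qt_{1,0}^{\beta_0}=d$ is the exterior derivative on $\mC=A^*(I\times L;R)$; this is precisely the source of the term $\delta_{1,k}\cdot d\ll\at_1,\at_2\gg$. Here I would argue by hand: since the fibre of $p$ is the closed manifold $L$, the fibrewise boundary with respect to $p$ is empty, so Proposition~\ref{stokes} reduces to $d\circ p_*=p_*\circ d$ on $A^*(I\times L;R)$. Combining this with the Leibniz rule $d(\at_1\wedge\at_2)=d\at_1\wedge\at_2+(-1)^{|\at_1|}\at_1\wedge d\at_2$ and unwinding the signs in the definition $\ll\xit,\etat\gg=(-1)^{|\etat|}p_*(\xit\wedge\etat)$ gives
\[
\ll d\at_1,\at_2\gg=d\ll\at_1,\at_2\gg+(-1)^{(|\at_1|+1)(|\at_2|+1)}\ll d\at_2,\at_1\gg,
\]
which is exactly the $k=1$ instance of the claimed identity. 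The corresponding step in Proposition~\ref{cl:cyclic} produced no $d$-term only because the pairing there takes values in $R$ equipped with the trivial differential. Summing the $\beta\ne\beta_0$ contributions over $\beta$ and adding this $d$-term when $k=1$ yields the full statement; the case $k=0$ is trivial.

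I expect the main obstacle to be purely the sign bookkeeping, exactly as in the long sign computation of Proposition~\ref{cl:cyclic}: one must track the signs coming from the two applications of the push-pull formula, the two uses of the pairing, and the cyclic relabelling of marked points, and verify that the total matches $(-1)^{(|\at_{k+1}|+1)\sum_{j=1}^{k}(|\at_j|+1)}$. No new geometric input is needed beyond the two observations above. One point worth noting in passing, as in the earlier proof, is that $\evbt_k$ is used here as a push-forward of differential forms; this is legitimate because, after the cyclic relabelling, $\evbt_k$ plays the role of $\evbt_0$, which is assumed to be a proper submersion, and by Lemma~\ref{lm:t_linear} it suffices throughout to check the identities on differential forms.
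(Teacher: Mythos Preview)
Your proposal is correct and matches the paper's own proof almost exactly: the paper simply says that for $(k,l,\beta)\neq(1,0,\beta_0)$ the argument of Proposition~\ref{cl:cyclic} carries over verbatim, and then does the same hands-on computation of $\ll d\at_1,\at_2\gg$ that you wrote out. One small imprecision: when you say the push--pull argument ``accounts for the contribution of every $\beta\ne\beta_0$'' at $k=1$, you should say every $(l,\beta)\ne(0,\beta_0)$, since $\qt_{1,l}^{\beta_0}$ for $l\ge1$ is also defined by push--pull and is covered by the same manipulation; you do identify the correct exceptional case in the next paragraph, so this is only a wording issue.
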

\begin{proof}
For $(k,l,\beta)\ne (1,0,\beta_0),$
the proof of Proposition~\ref{cl:cyclic} can be repeated with $\q$, $evb_j$, and $evi_j$, replaced by $\qt$, $\evbt_j$, and $\evit_j$, respectively, since $\rdim(evb_j)=\rdim(\evbt_j)$. The appropriate relabeling automorphism is now given by
\[
\tilde\varphi(t,\Sigma,u,(z_0,\ldots,z_{k}),\vec w) = (t,\Sigma,u,(z_1,\ldots,z_k,z_0),\vec w),
\]
and its sign is still $sgn(\tilde\varphi)=k$.

For $(k,l,\beta)=(1,0,\beta_0)$, we compute
\begin{align*}
\ll d\at_1,\at_2\gg=&
(-1)^{|\at_2|}p_*(d\at_1\wedge\at_2)\\=&
p_*\big((-1)^{|\at_2|}d(\at_1\wedge\at_2)-(-1)^{|\at_1|+|\at_2|}\at_1\wedge d\at_2\big)\\
=&(-1)^{|\at_2|}d(p_*(\at_1\wedge\at_2))+(-1)^{|\at_1|+|\at_2|+1+|\at_1|(|\at_2|+1)}p_*(d\at_2\wedge\at_1)\\
=&d\ll\at_1,\at_2\gg+(-1)^{(|\at_1|+1)(|\at_2|+1)}\ll d\at_2,\at_1\gg.
\end{align*}

\end{proof}

\subsubsection{Degree of structure maps}
\begin{cl}\label{qt_deg_str_map}
For $k\ge 0$ and $\gt_1,\ldots,\gt_l\in A^*(I \times X;Q)$ with $|\gt_j| = 2,$ the map
\[
\qt_{k,l}(\; ;\gt_1,\ldots,\gt_l):\mC^{\otimes k}\lrarr \mC
\]
is of degree $2-k$.
\end{cl}

\begin{proof}
Note that $\rdim(evb_0)=\rdim(\evbt_0)$. Therefore, the proof of Proposition~\ref{deg_str_map} is valid verbatim in our case, with $\q$ replaced by $\qt$ and $evb_0$ by $\evbt_0$.

\end{proof}

\subsubsection{Symmetry}

\begin{cl}
Let $k\ge -1$. For any permutation $\sigma\in S_l,$
\[
\qt_{k,l}(\at_1,\ldots,\at_k;\gt_1,\ldots,\gt_l)=
(-1)^{s_\sigma(\gamma)}\qt_{k,l}(\at_1,\ldots,\at_k; \gt_{\sigma(1)},\ldots,\gt_{\sigma(l)}),
\]
where
$s_\sigma(\gamma)$ is as in~\eqref{eq:sgnsigmagamma}.
\end{cl}
\begin{proof}
The proof of Proposition~\ref{cl:symmetry} is valid verbatim, with $\qt$, $\evbt_j$, and $\evit_j$, instead of $\q$, $evb_j$, and $evi_j$, respectively.

\end{proof}

\subsubsection{Fundamental class}

\begin{cl}
For $k\ge 0,$
\[
\qt_{k,l}^\beta(\at_1,\ldots,\at_k;1,\gt_1,\ldots,\gt_{l-1})=
\begin{cases}
-1, & (k,l,\beta)=(0,1,\beta_0),\\
0, & \text{otherwise}.
\end{cases}
\]
Furthermore,
\[
\qt_{-1,l}^\beta(1,\gt_1,\ldots,\gt_{l-1})=0.
\]
\end{cl}
\begin{proof}
Since $\rdim \evbt_0=\rdim evb_0$, we can repeat the proof of
Proposition~\ref{q_fund}
with $\Mt$, $\evbt_j$, $\evit_j$, and $\qt$, instead of $\M$, $evb_j$, $evi_j$, and $\q$, respectively.
In the case $(k,l,\beta)=(0,1,\beta_0)$ the map
$\evbt_0$ now identifies the moduli space with $I\times L$.

\end{proof}

\subsubsection{Energy zero}
\begin{cl}\label{cl:qt_zero}
For $k\ge 0,$
\[
\qt_{k,l}^{\beta_0}(\at_1,\ldots,\at_k;\gt_1,\ldots,\gt_l)=
\begin{cases}
d\at_1, & (k,l)=(1,0),\\
(-1)^{|\at_1|}\at_1\wedge\at_2, & (k,l)=(2,0),\\
-\gt_1|_{I\times L}, & (k,l)=(0,1),\\
0, & \text{otherwise}.
\end{cases}
\]
Furthermore,
\[
\qt_{-1,l}^{\beta_0}(\gt_1,\ldots,\gt_l)=0.
\]
\end{cl}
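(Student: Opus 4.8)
The plan is to reduce everything to the non-parametrized computation of Proposition~\ref{q_zero}. The key observation is that for $\beta=\beta_0$ every $J_t$-holomorphic stable map is constant, so $\M_{k+1,l}(\beta_0;J_t)$ does not depend on $t$ --- it is the Deligne--Mumford space of stable marked disks times $L$ --- and hence
\[
\Mt_{k+1,l}(\beta_0)\cong I\times\M_{k+1,l}(\beta_0).
\]
Under this identification all of the boundary evaluation maps collapse to a single map $\evbt_0=\dots=\evbt_k$, and each interior evaluation map factors through it as $\evit_j=(\Id_I\times i)\circ\evbt_0$; for $k=-1$ one uses instead the map $\Mt_{0,l}(\beta_0)\to I\times L$ recording the value of the constant stable map. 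I would first record these identities and note that, exactly as in the non-parametrized case, the identifications respect orientations (with $\Mt_{k+1,l}(\beta_0)$ carrying the product orientation).

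Next I would substitute these identities into the definitions of $\qt_{k,l}^{\beta_0}$ and $\qt_{-1,l}^{\beta_0}$ and apply the projection formula (property~\eqref{prop:pushpull}, together with property~\eqref{prop:pushcomp} in the $k=-1$ case to split the push-forward along $p_\M$) to pull all the pulled-back forms out of the push-forward. This gives
\[
\qt_{k,l}^{\beta_0}(\at_1,\dots,\at_k;\gt_1,\dots,\gt_l)=(-1)^{\varepsilon(\at;\gt)}\Big(\bigwedge_{j=1}^k\at_j\wedge\bigwedge_{j=1}^l\gt_j|_{I\times L}\Big)\wedge(\evbt_0)_*1,
\]
and similarly for $\qt_{-1,l}^{\beta_0}$. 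The fiber of $\evbt_0$ coincides with that of $evb_0$ for the corresponding non-parametrized moduli space (the extra $I$ factor appears in both source and target), so by the dimension count in the proof of Lemma~\ref{lm:deg_q}, with $\mu(\beta_0)=0$, it has real dimension $k+2l-2$. Hence $(\evbt_0)_*1$ is a form of negative degree, and therefore vanishes, unless $k+2l-2=0$, i.e.\ $(k,l)\in\{(2,0),(0,1)\}$; the case $(k,l)=(1,0)$ is not covered here because $\qt_{1,0}^{\beta_0}=d$ by definition, and the remaining small cases (among them $(k,l)=(0,0)$ and, for $k=-1$, $l\le1$) vanish for the same degree reason or by convention, just as in Proposition~\ref{q_zero}.

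It then remains to evaluate the two surviving cases. For $(k,l)=(2,0)$ the moduli space of stable disks with three boundary and no interior marked points is a point, so $\Mt_{3,0}(\beta_0)\cong I\times L$ with $\evbt_0$ the identity; plugging this into the displayed formula and computing $\varepsilon(\at_1,\at_2;\emptyset)\equiv|\at_1|\pmod 2$ gives $(-1)^{|\at_1|}\at_1\wedge\at_2$. For $(k,l)=(0,1)$ the moduli space of stable disks with one boundary and one interior marked point is again a point, so $\Mt_{1,1}(\beta_0)\cong I\times L$ with $\evbt_0$ the identity and $\evit_1=\Id_I\times i$, and since $\varepsilon(\emptyset;\gt_1)=|\gt_1|+1$ this yields $(-1)^{|\gt_1|+1}\gt_1|_{I\times L}$. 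Finally, for $k=-1$ the fiber of the value map $\Mt_{0,l}(\beta_0)\to I\times L$ has dimension $2l-3$, which is positive for $l\ge 2$, so the relevant push-forward of $1$ again vanishes and $\qt_{-1,l}^{\beta_0}=0$.

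I do not expect a genuine obstacle: the argument runs word for word parallel to the proof of Proposition~\ref{q_zero} with $L$ replaced by $I\times L$. The only point requiring a little care is checking that the identifications $\Mt_{3,0}(\beta_0)\cong\Mt_{1,1}(\beta_0)\cong I\times L$ preserve orientation, which is inherited from the corresponding non-parametrized statement used in Propositions~\ref{cl:unit} and~\ref{q_zero} together with the product orientation on $\Mt_{k+1,l}(\beta_0)$.
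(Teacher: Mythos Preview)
Your proof is correct and takes essentially the same approach as the paper: the paper's proof simply observes that $\dim(\fiber(\evbt_j))=\dim(\fiber(evb_j))$ and $\dim(\fiber(\evit_j))=\dim(\fiber(evi_j))$, so the argument of Proposition~\ref{q_zero} goes through verbatim with $\qt$ in place of $\q$. You spell out this verbatim argument in more detail, but the strategy is identical.
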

\begin{proof}
Note that $\rdim(evi_j)=\rdim(\evit_j)$ and $\rdim(evb_j)=\rdim(\evbt_j)$ for any $j$. Therefore the proof of Proposition~\ref{q_zero} is valid verbatim in our case, with $\q$ replaced by $\qt$ everywhere.

\end{proof}

\subsubsection{Divisors}

Note that $H_2(X,L;\Z)\simeq H_2(I\times X,I\times L;\Z)$. Therefore, the integral $\int_\beta\gt$ is defined for $\gt\in A^2(I\times X,I\times L)$ and $\beta\in H_2(X,L;\Z)$.

\begin{cl}
Assume $\gt_1\in A^2(I\times X,I\times L)\otimes Q$, $d\gt_1 = 0$,
and the map
$
H_2(X,L;\Z)\to Q
$
given by $\beta\mapsto\int_\beta\gt_1$ descends to $\sly$.
Then
	\begin{equation}\label{eq:qtdiv} \qt_{k,l}^{\beta}(\otimes_{j=1}^k\at_j;\otimes_{j=1}^{l}\gt_j)=
	\left(\int_\beta\gt_1\right) \cdot\qt_{k,l-1}^{\beta} (\otimes_{j=1}^k\at_j;\otimes_{j=2}^{l}\gt_j)
	\end{equation}
for $k\ge -1$.
\end{cl}
\begin{proof}
The proof or Proposition~\ref{cl:q_div} holds verbatim with $\Mt$, $\evit_j$, $\evbt_j$, and $\qt$, instead of $\M$, $evi_j$, $evb_j$, and $\q$, respectively.

\end{proof}

\subsubsection{Top degree}
\leavevmode
In this section, we use the notation introduced in Section~\ref{ssec:no_top_deg}.

\begin{cl}\label{cl:qt_no_top_deg}
Assume $(k,l,\beta)\not\in\{(1,0,\beta_0),(0,1,\beta_0),(2,0,\beta_0)\}$.
Then $(\qt_{k,l}^\beta(\at;\gt))_{n+1}=0$ for all lists $\at,\gt$.
\end{cl}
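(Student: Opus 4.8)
The plan is to mirror the proof of Proposition~\ref{no_top_deg} almost verbatim, accounting for the extra dimension coming from the interval factor $I$ in the pseudo-isotopy moduli spaces. The key dimensional fact is that $\dim \Mt_{k+1,l}(\beta) = \dim \M_{k+1,l}(\beta) + 1$, and correspondingly $\dim(\fiber(\evbt_0)) = \dim(\fiber(evb_0)) + 1 \equiv k+1 \pmod 2$, so that $\evbt_0$ has fiber of real dimension $\dim\Mt_{k+1,l}(\beta) - (n+1)$. The target degree to rule out is therefore $n+1$ rather than $n$, which is exactly what the statement asks.

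\begin{proof}
Assume without loss of generality that $\qt_{k,l}^\beta(\at;\gt)$ is homogeneous with respect to the grading $|\cdot|$. Let $\evbt_j$ and $\evit_j$ be the evaluation maps for $\Mt_{k+1,l}(\beta)$, and set
\[
\xit:=\bigwedge_{j=1}^k(\evbt_j)^*\at_j\wedge\bigwedge_{j=1}^l(\evit_j)^*\gt_j,
\]
so that $\qt_{k,l}^\beta(\at;\gt)=(-1)^{\varepsilon(\at;\gt)}(\evbt_0)_*\xit$. If $|\qt_{k,l}^\beta(\at;\gt)|=n+1$, then, since $\dim(\fiber(\evbt_0))=\dim\Mt_{k+1,l}(\beta)-(n+1)$,
\[
n+1=|\xit|-\dim(\fiber(\evbt_0))=|\xit|-\dim\Mt_{k+1,l}(\beta)+(n+1),
\]
so $|\xit|=\dim\Mt_{k+1,l}(\beta)$. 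On the other hand, letting $\pi:\Mt_{k+1,l}(\beta)\to\Mt_{k,l}(\beta)$ be the map that forgets $z_0$, and writing $\evbt_j',\evit_j'$ for the evaluation maps of $\Mt_{k,l}(\beta)$, we have $\xit=\pi^*\xit'$ with
\[
\xit'=\bigwedge_{j=1}^k(\evbt_{j-1}')^*\at_j\wedge\bigwedge_{j=1}^l(\evit_j')^*\gt_j\in A^*(\Mt_{k,l}(\beta)).
\]
In particular $\deg\xit'=\deg\xit=\dim\Mt_{k+1,l}(\beta)>\dim\Mt_{k,l}(\beta)$, so $\xit'=0$ and hence $\xit=0$, giving $\qt_{k,l}^\beta(\at;\gt)=0$.
\end{proof}

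The one point requiring a line of verification is the parity/dimension count for $\dim(\fiber(\evbt_0))$; this is immediate from Lemma~\ref{lm:deg_q} applied to $\M_{k+1,l}(\beta;J_t)$ together with the product-type behaviour of dimensions noted in the proof of Proposition~\ref{cl:qt_zero}, namely $\dim\Mt_{k+1,l}(\beta)=\dim\M_{k+1,l}(\beta)+1$. I do not anticipate any genuine obstacle here: the argument is purely dimensional and the forgetful map $\pi$ that forgets $z_0$ exists in the same range of $(k,l,\beta)$ as in Proposition~\ref{no_top_deg}, the excluded triples $(1,0,\beta_0),(0,1,\beta_0),(2,0,\beta_0)$ being precisely those where forgetting $z_0$ fails to yield a stable curve. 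The only care needed is to track that the exceptional cases are the same; this follows because stability of the forgotten configuration depends only on the combinatorial type, which is unchanged by passing from $\M$ to $\Mt$.
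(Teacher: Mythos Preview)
Your proof is correct and follows exactly the approach the paper intends (the paper simply says ``The proof is similar to that of Proposition~\ref{no_top_deg}''). One small slip in your preamble: you write $\dim(\fiber(\evbt_0)) = \dim(\fiber(evb_0)) + 1$, but in fact these fiber dimensions are \emph{equal} (as the paper notes in the proof of Proposition~\ref{cl:qt_zero}), since both the source and target of $\evbt_0$ gain one dimension from the $I$ factor; fortunately your actual proof uses the correct formula $\dim(\fiber(\evbt_0)) = \dim\Mt_{k+1,l}(\beta) - (n+1)$ and is unaffected.
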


\begin{proof}
Follow the proof of Proposition~\ref{no_top_deg} with $\q$ replaced by $\qt$ and $evb_0$ by $\evbt_0$. In this case, $\rdim \evbt_0=\dim\Mt_{k+1,l}(\beta)-n-1$, so the assumption $\deg^d (\qt_{k,l}^\beta(\at;\gt))=n+1$ is what implies $\deg^d(\xi)=\dim\Mt_{k+1,l}(\beta)$. The rest of the proof is then valid.

\end{proof}

\begin{cl}\label{lm:mt0}
For all lists $\gt=(\gt_1,\ldots,\gt_l)$, we have
\[
\ll\qt_{0,l}(\gt),1\gg=
\begin{cases}
0, & l\ne 1,\\
-p_*(\gt_1|_{I\times L}), & l=1.
\end{cases}
\]
\end{cl}
\begin{proof}
By Proposition~\ref{cl:qt_no_top_deg}, the only possible contribution to $(\ll\qt_{0,l}(\gt),1\gg)_1$ is from $\qt_{0,1}^{\beta_0}$, but
$\qt_{0,1}^{\beta_0}(\gt_1)=-\gt_1|_{I\times L}$ and $\ll\gt_1|_{I\times L},1\gg=p_*(\gt_1|_{I\times L})$.
It remains to compute $(\ll\qt_{0,l}(\gt),1\gg)_0.$ To do this, we evaluate at an arbitrary point $t\in I$.
For clarity, denote by $j_t^{pt}:pt\to I,$ $j_t^L:L\to I\times L$, and $j_t^X:X\to I\times X,$ the inclusions.
Consider the pull-back diagram
\[
\xymatrix{
L\ar[r]^(.4){j^L_t}\ar[d]& I\times L\ar[d]^p\\
pt\ar[r]^{j_t^{pt}}&I.
}
\]
By property~\eqref{prop:pushfiberprod} of integration and Proposition~\ref{lm:pseudo} we have
\begin{multline*}
(\ll\qt_{0,l}(\gt),1\gg)_0(t)= (j^{pt}_t)^*(p_*\qt_{0,l}(\gt))_0
= (j^{pt}_t)^*p_*(\qt_{0,l}(\gt))_n=\\
=\int_L (j^L_t)^*(\qt_{0,l}(\gt))_n
=\int_L (\q_{0,l}^t((j^X_t)^*\gt)).
\end{multline*}
By Proposition~\ref{no_top_deg}, this can only be nonzero when $l=1$, and then
\begin{multline*}
(\ll\qt_{0,l}(\gt),1\gg)_0(t)=\int_L (\q_{0,l}^t((j^X_t)^*\gt))
=\int_L\q_{0,1}^{t,\beta_0}((j^X_t)^*\gt)\\
= -\int_Li^*(j^X_t)^*\gt
= -(pt)_*(j_t^L)^*(\Id\times i)^*\gt_1
= - (j_t^{pt})^*p_*(\Id\times i)^*\gt_1.
\end{multline*}

\end{proof}

\subsubsection{Chain map}
As in Section
consider the complex
\[
T(\mD):=\bigoplus_{l\ge 0}\mD^{\otimes l}
\]
with the differential inherited from $\mD$.
Then the operators $\qt_{\emptyset,l}$ extend naturally to a map
\[
\qt_{\emptyset}: T(\mD) \to A^*(I\times X;Q).
\]

\begin{cl}
The operator $\qt_{\emptyset}$ is a chain map on $T(\mD).$ That is,
\[
\qt_{\emptyset}(d\eta)=d\qt_{\emptyset}(\eta),\quad
\forall\eta\in T(\mD).
\]
\end{cl}
\begin{proof}
The proof is the same as for Proposition~\ref{cl:chain}, with $\evt_0$ instead of $ev_0$.

\end{proof}

\subsubsection{Proof of Theorem~\ref{thm:isot}}
\begin{proof}[Proof of Theorem~\ref{thm:isot}]
Choose $\eta\in D$ with $|\eta|=1$ such that $\gamma'-\gamma=d\eta$. Take
\[
\gt:=\gamma+t(\gamma'-\gamma)+dt\wedge\eta\in\mD.
\]
Then $|\gt|=2$ and
\begin{gather*}
d\gt=dt\wedge(\gamma'-\gamma)-dt\wedge d\eta=0,\\
j_0^*\gt=\gamma,\qquad j_1^*\gt=\gamma'.
\end{gather*}

From Propositions~\ref{cl:mgt_str},~\ref{lm:t_linear},~\ref{lm:pseudo},~\ref{lm:pseudoprod},~\ref{cl:qt_unit},~\ref{cl:qt_cyclic},~\ref{qt_deg_str_map},~\ref{lm:mt0}, and equation~\eqref{eq:pseudopair},
it follows that $(\mC,\mgt)$ is a cyclic unital pseudoisotopy from $(C,\mg)$ to $(C,\m^{\gamma'}).$

\end{proof}

\subsubsection{Relaxed assumptions}

Define a subcomplex of $A^*(X)$ by
\[
\widehat{A}^*(X,L):=\left\{\eta\in A^*(X)\;\bigg|\,\int_Li^*\eta=0\right\}.
\]
Then Theorems~\ref{thm:str} and~\ref{thm:isot} hold for $\gamma\in\widehat{A}^*(X,L)$ by verbatim the same proof as for $\gamma\in A^*(X,L)$. Specifically, for closed $\gamma\in (\mI_Q\widehat{A}^*(X,L))_2$, we have that
$(\{\mg_k\}_{k\ge 0},\langle\,,\,\rangle,1)$ is a cyclic unital $A_\infty$ structure on $C$.
Moreover, set
\[
\widehat{A}^*(I\times X,I\times L):=\left\{\etat\in A^*(I\times X)\;\bigg|\,p_*(\Id\times i)^*\etat=0\right\}.
\]
Then given closed $\gamma,\gamma'\in (\mI_Q\widehat{A}^*(X,L))_2$ with $[\gamma]=[\gamma']\in H^*(\widehat{A}^*(X,L),d)$, there exists a cyclic unital pseudoisotopy $\mgt$ from $\mg$ to $\mgp$ with $\gt\in \widehat{A}^*(I\times X,I\times L).$

As for Theorem~\ref{thm:prop}, the fundamental class and zero properties are satisfied for $\mg$. Namely, if $\gamma\in (\mI_Q\widehat{A}^*(X,L))_2$ is closed and $\d_{t_0}\gamma=1$, then
$\d_{t_0}\mg_k=-1\cdot \delta_{0,k}$ and $\bar{\m}^{\gamma}$ is a deformation of the standard differential graded algebra structure.
However, the divisor property is not necessarily satisfied.

\subsection{Uniform formulation of structure equations}\label{ssec:uniform}
Using the cyclic structure $\ll\;,\,\gg$,
the $A_\infty$ relations can be rephrased so the case $k=-1$ fits more uniformly. Recall the definition of $\widetilde{GW}$ from~\eqref{eq:GWT}.
\begin{prop}\label{prop:mgt_a_infty}
For $k\ge 0$,
\begin{multline*}
d\ll\mgt_k(\at_1,\ldots,\at_k),\at_{k+1}\gg=\\
=\sum_{\substack{k_1+k_2=k+1\\k_1\ge 1,k_2\ge 0\\1\le i\le k_1}}
(-1)^{\nu(\at;k_1,k_2,i)}\ll\mgt_{k_1}(\at_{i+k_2},\ldots,\at_{k+1},\at_1,\ldots,\at_{i-1}), \mgt_{k_2}(\at_i,\ldots,\at_{k_2+i-1})\gg
\end{multline*}
with
\[
\nu(\at;k_1,k_2,i):=\sum_{j=1}^{i-1}(|\at_j|+1)+
\sum_{j=i+k_2}^{k+1}(|\at_j|+1)\Big(\sum_{\substack{m\ne j\\1\le m\le k+1}}(|\at_m|+1)+1\Big)+1
\]
For $k = -1,$
\[
d\mgt_{-1} = -\frac{1}{2}\ll\mgt_0,\mgt_0\gg+\widetilde{GW}.
\]
\end{prop}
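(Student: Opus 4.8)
The plan is to handle the two cases $k\ge 0$ and $k=-1$ separately, each by specializing and reorganizing relations already in hand. For $k\ge 0$, the identity should follow formally from the $A_\infty$ relations of Proposition~\ref{cl:mgt_str} together with the cyclic symmetry of Proposition~\ref{cl:qt_cyclic} (summed over the interior insertions against $\gt^{\otimes l}/l!$, so that it holds with $\qt$ replaced by $\mgt$ and $\langle\;,\,\rangle$ by $\ll\;,\,\gg$). First I would pair the $A_\infty$ relation for the $k$ inputs $\at_1,\dots,\at_k$ against $\at_{k+1}$, obtaining a vanishing sum of terms $\pm\ll\mgt_{k_1}(\at_1,\dots,\at_{i-1},\mgt_{k_2}(\at_i,\dots,\at_{i+k_2-1}),\at_{i+k_2},\dots,\at_k),\at_{k+1}\gg$ over $k_1+k_2=k+1$, $k_1\ge 1$, $1\le i\le k_1$. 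To each such term I would apply Proposition~\ref{cl:qt_cyclic} exactly $k_1-i+1$ times; each application rotates by one the cyclic list formed by the $k_1$ arguments of the outer operation together with the second pairing slot, and after $k_1-i+1$ rotations the nested operation $\mgt_{k_2}(\at_i,\dots,\at_{i+k_2-1})$ has been carried into the second pairing slot, producing $\pm\ll\mgt_{k_1}(\at_{i+k_2},\dots,\at_{k+1},\at_1,\dots,\at_{i-1}),\mgt_{k_2}(\at_i,\dots,\at_{i+k_2-1})\gg$. In every term except the one with $(k_1,k_2)=(1,k)$ the outer operation has at least two arguments throughout the rotations, so no $\delta_{1,k}$ correction arises and one gets precisely the right-hand-side terms. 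In the exceptional term $\ll\mgt_1(\mgt_k(\at_1,\dots,\at_k)),\at_{k+1}\gg$, a single application of the $k=1$ case of Proposition~\ref{cl:qt_cyclic} contributes the $\delta_{1,1}$ term $d\ll\mgt_k(\at_1,\dots,\at_k),\at_{k+1}\gg$ — which is the left-hand side — together with the right-hand-side term indexed by $(k_1,k_2)=(1,k)$. Collecting everything and rearranging yields the stated equation.

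The real work in the case $k\ge 0$ is sign bookkeeping, and this is the step I expect to be the main obstacle. One must check that the sign $\sum_{j<i}(|\at_j|+1)$ coming from the $A_\infty$ relation, together with the $k_1-i+1$ Koszul signs picked up from the rotations — the rotation that moves $\at_j$ (for $j=i+k_2,\dots,k+1$) to the front contributes $(|\at_j|+1)\big(\sum_{m\ne j,\,1\le m\le k+1}(|\at_m|+1)+1\big)$, where one uses that $|\mgt_{k_2}(\at_i,\dots,\at_{i+k_2-1})|+1\equiv 1+\sum_{m=i}^{i+k_2-1}(|\at_m|+1)\pmod 2$ by Proposition~\ref{qt_deg_str_map} — add up modulo $2$ to $\nu(\at;k_1,k_2,i)$. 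This is a routine but lengthy computation, entirely parallel to the sign tracking in the proof of Proposition~\ref{cl:cyclic}, and I would present it by isolating the contribution of a single rotation and then summing.

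For $k=-1$, the identity is obtained by specializing the $\q_{-1}$-relations for the pseudo-isotopy, Proposition~\ref{cl:qt_-1}: set every interior input equal to the fixed closed form $\gt$, multiply by $1/l!$, and sum over $l$. Because $d\gt=0$, the sum $\sum_{(2:3)=\{j\}}(-1)^{|\gt^{(1:3)}|}\qt_{-1,l}(\gt^{(1:3)}\otimes d\gt_j\otimes\gt^{(3:3)})$ vanishes term by term. The term $d\qt_{-1,l}(\gt^{\otimes l})$ sums to $d\mgt_{-1}$, and the sphere term $\pm p_*i^*\qt_{\emptyset,l}(\gt^{\otimes l})$ sums to $\pm\widetilde{GW}$. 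For the quadratic term I would use that $\deg_{\mD}\gt=2$ is even, so $sgn(\sigma^{\gt}_{I\cup J})\equiv 0$ and $(-1)^{|\gt_I|+n}=(-1)^n$, together with the standard reindexing identity $\sum_{l\ge 0}\frac{1}{l!}\sum_{I\sqcup J=[l]}F(|I|,|J|)=\sum_{l_1,l_2\ge 0}\frac{1}{l_1!\,l_2!}F(l_1,l_2)$ (there being $\binom{l}{l_1}$ ordered sublist partitions of $[l]$ with $|I|=l_1$); this converts $-\tfrac12\sum_l\frac{1}{l!}\sum_{I\sqcup J}(-1)^n\ll\qt_{0,|I|}(\gt^{\otimes|I|}),\qt_{0,|J|}(\gt^{\otimes|J|})\gg$ into $-\tfrac12(-1)^n\ll\mgt_0,\mgt_0\gg$. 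Rearranging the resulting identity gives $d\mgt_{-1}=(-1)^n\tfrac12\ll\mgt_0,\mgt_0\gg\pm\widetilde{GW}$, the sign of the last term inherited from Proposition~\ref{cl:qt_-1}. This case presents no difficulty beyond the bookkeeping just described.
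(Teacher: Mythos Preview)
Your proposal is correct and follows essentially the same route as the paper. For $k\ge 0$, both arguments combine the $A_\infty$ relations of Proposition~\ref{cl:mgt_str} with the cyclic symmetry of Proposition~\ref{cl:qt_cyclic}; the paper starts from $d\ll\mgt_k(\at),\at_{k+1}\gg$, applies the $k=1$ cyclic relation to separate out the two $d$-terms, expands $\ll d\mgt_k(\at),\at_{k+1}\gg$ via $A_\infty$, and then rotates each summand cyclically until the inner $\mgt_{k_2}$ reaches the pairing slot---exactly your $k_1-i+1$ rotations, with the same use of Proposition~\ref{qt_deg_str_map} to compute $|\mgt_{k_2}(\cdots)|+1\pmod 2$. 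Your organization (pair the full $A_\infty$ relation with $\at_{k+1}$ first, then rotate) is the same argument run backward, and the $\delta_{1,1}$ correction from the single $(k_1,k_2)=(1,k)$ term produces the left-hand side just as the paper's initial step does. For $k=-1$, your derivation from Proposition~\ref{cl:qt_-1} using $d\gt=0$, $|\gt|\equiv 0$, and the multinomial reindexing is exactly what the paper does (stated more tersely there).
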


\begin{proof}
For $k\ge 0,$ we use Propositions~\ref{cl:qt_cyclic} and~\ref{cl:mgt_str} to obtain
\begin{align*}
d\ll&\mgt_k(\at_1,\ldots,\at_k),\at_{k+1}\gg=\\
=&\ll d\mgt_k(\at_1,\ldots,\at_k),\at_{k+1}\gg-
(-1)^{(|\at_{k+1}|+1)(|\mgt_k(\at_1,\ldots,\at_k)|+1)}\ll d\at_{k+1},\mgt_k(\at_1,\ldots,\at_k)\gg\\
=&-\hspace{-1.5em}\sum_{\substack{k_1+k_2=k+1\\(k_1,\beta)\ne(1,\beta_0)\\1\le i \le k_1}}\hspace{-1.5em} (-1)^{\sum_{j=1}^{i-1}(|\at_j|+1)}\ll T^\beta\mt^{\gt,\beta}_{k_1}(\at_1,\ldots,\at_{i-1},\mgt_{k_2}(\at_i,\ldots,\at_{i+k_2-1}),\at_{i+k_2},\ldots,\at_k),\at_{k+1}\gg+\\
&+(-1)^{(|\at_{k+1}|+1)(|\mgt_k(\at_1,\ldots,\at_k)|+1)+1}\ll d\at_{k+1},\mgt_k(\at_1,\ldots,\at_k)\gg\\
=&\hspace{-0.5em}\sum_{\substack{k_1+k_2=k+1\\(k_1,\beta)\ne(1,\beta_0)\\1\le i \le k_1}}\hspace{-1.5em} (-1)^{1+\sum_{j=1}^{i-1}(|\at_j|+1)+\nu'}\ll T^\beta\mt^{\gt,\beta}_{k_1}(\at_{i+k_2},\ldots,\at_k,\at_{k+1}\at_1,\ldots,\at_{i-1}),\mgt_{k_2}(\at_i,\ldots,\at_{i+k_2-1})\gg+\\
&+(-1)^{(|\at_{k+1}|+1)(\sum_{j=1}^k(|\at_j|+1)+1)+1}\ll d\at_{k+1},\mgt_k(\at_1,\ldots,\at_k)\gg,
\end{align*}
with the sign $\nu'$ as follows:
\begin{align*}
\nu'=&
\sum_{j=i+k_2}^{k+1}(|\at_j|+1)\Big(\hspace{-0.5em}\sum_{\substack{m\ne j\\1\le m\le k+1\\m\not\in \{i,\ldots,k_2+i-1\}}}\hspace{-1em}(|\at_m|+1)+(|\mgt(\at_i,\ldots,\at_{i+k_2-1})|+1)\Big)\\
\equiv&\sum_{j=i+k_2}^{k+1}(|\at_j|+1)\Big(\sum_{\substack{m\ne j\\1\le m\le k+1}}(|\at_m|+1)+1\Big)\pmod 2.
\end{align*}

For $k=-1,$ note that $|\gt|=2$, so
$
sgn(\sigma^{\gt}_{I\cup J})\equiv 0\pmod 2.
$
This implies that Proposition~\ref{cl:qt_-1} reads
\[
-d\mgt_{-1}=\frac{1}{2}\ll\mgt_0,\mgt_0\gg
- \widetilde{GW}.
\]
\end{proof}

\bibliography{../../bibliography_exp}

\providecommand{\bysame}{\leavevmode\hbox to3em{\hrulefill}\thinspace}
\providecommand{\MR}{\relax\ifhmode\unskip\space\fi MR }
\providecommand{\MRhref}[2]{%
  \href{http://www.ams.org/mathscinet-getitem?mr=#1}{#2}
}
\providecommand{\href}[2]{#2}
\begin{thebibliography}{10}

\bibitem{Banyaga}
A.~Banyaga, \emph{An introduction to symplectic geometry}, Holomorphic curves
  in symplectic geometry, Progr. Math., vol. 117, Birkh\"{a}user, Basel, 1994,
  With an appendix by M. Audin, F. Lalonde, L. Polterovich and the author,
  pp.~17--40, \href {http://dx.doi.org/10.1007/978-3-0348-8508-9\_2}
  {\path{doi:10.1007/978-3-0348-8508-9\_2}}.

\bibitem{CieliebakGoldstein}
K.~Cieliebak and E.~Goldstein, \emph{A note on the mean curvature, {M}aslov
  class and symplectic area of {L}agrangian immersions}, J. Symplectic Geom.
  \textbf{2} (2004), no.~2, 261--266.

\bibitem{EvansLekili}
J.~D. Evans and Y.~Lekili, \emph{Floer cohomology of the {C}hiang
  {L}agrangian}, Selecta Math. (N.S.) \textbf{21} (2015), no.~4, 1361--1404,
  \href {http://dx.doi.org/10.1007/s00029-014-0171-9}
  {\path{doi:10.1007/s00029-014-0171-9}}.

\bibitem{Fukaya}
K.~Fukaya, \emph{Cyclic symmetry and adic convergence in {L}agrangian {F}loer
  theory}, Kyoto J. Math. \textbf{50} (2010), no.~3, 521--590, \href
  {http://dx.doi.org/10.1215/0023608X-2010-004}
  {\path{doi:10.1215/0023608X-2010-004}}.

\bibitem{Fukaya2}
K.~Fukaya, \emph{Counting pseudo-holomorphic discs in {C}alabi-{Y}au 3-folds},
  Tohoku Math. J. (2) \textbf{63} (2011), no.~4, 697--727, \href
  {http://dx.doi.org/10.2748/tmj/1325886287}
  {\path{doi:10.2748/tmj/1325886287}}.

\bibitem{FOOO}
K.~Fukaya, Y.-G. Oh, H.~Ohta, and K.~Ono, \emph{Lagrangian intersection {F}loer
  theory: anomaly and obstruction. {P}arts {I},{II}}, AMS/IP Studies in
  Advanced Mathematics, vol.~46, American Mathematical Society, Providence, RI;
  International Press, Somerville, MA, 2009.

\bibitem{FOOOtoricI}
K.~Fukaya, Y.-G. Oh, H.~Ohta, and K.~Ono, \emph{Lagrangian {F}loer theory on
  compact toric manifolds, {I}}, Duke Math. J. \textbf{151} (2010), no.~1,
  23--174, \href {http://dx.doi.org/10.1215/00127094-2009-062}
  {\path{doi:10.1215/00127094-2009-062}}.

\bibitem{FOOOtoricII}
K.~Fukaya, Y.-G. Oh, H.~Ohta, and K.~Ono, \emph{Lagrangian {F}loer theory on
  compact toric manifolds {II}: bulk deformations}, Selecta Math. (N.S.)
  \textbf{17} (2011), no.~3, 609--711, \href
  {http://dx.doi.org/10.1007/s00029-011-0057-z}
  {\path{doi:10.1007/s00029-011-0057-z}}.

\bibitem{FOOO1}
K.~Fukaya, Y.-G. Oh, H.~Ohta, and K.~Ono, \emph{Lagrangian {F}loer theory and
  mirror symmetry on compact toric manifolds}, Ast\'erisque (2016), no.~376,
  vi+340.

\bibitem{FultonPandharipande}
W.~Fulton and R.~Pandharipande, \emph{Notes on stable maps and quantum
  cohomology}, Algebraic geometry---{S}anta {C}ruz 1995, Proc. Sympos. Pure
  Math., vol.~62, Amer. Math. Soc., Providence, RI, 1997, pp.~45--96, \href
  {http://dx.doi.org/10.1090/pspum/062.2/1492534}
  {\path{doi:10.1090/pspum/062.2/1492534}}.

\bibitem{Grothendieck}
A.~Grothendieck, \emph{Sur la classification des fibr\'es holomorphes sur la
  sph\`ere de {R}iemann}, Amer. J. Math. \textbf{79} (1957), 121--138.

\bibitem{Joyce2}
D.~Joyce, \emph{On manifolds with corners}, Advances in geometric analysis,
  Adv. Lect. Math. (ALM), vol.~21, Int. Press, Somerville, MA, 2012,
  pp.~225--258.

\bibitem{Joyce3}
D.~Joyce, \emph{A generalization of manifolds with corners}, Adv. Math.
  \textbf{299} (2016), 760--862, \href
  {http://dx.doi.org/10.1016/j.aim.2016.06.004}
  {\path{doi:10.1016/j.aim.2016.06.004}}.

\bibitem{KwonOh}
D.~Kwon and Y.-G. Oh, \emph{Structure of the image of (pseudo)-holomorphic
  discs with totally real boundary condition}, Comm. Anal. Geom. \textbf{8}
  (2000), no.~1, 31--82, Appendix 1 by Jean-Pierre Rosay, \href
  {http://dx.doi.org/10.4310/CAG.2000.v8.n1.a2}
  {\path{doi:10.4310/CAG.2000.v8.n1.a2}}.

\bibitem{Lazzarini}
L.~Lazzarini, \emph{Existence of a somewhere injective pseudo-holomorphic
  disc}, Geom. Funct. Anal. \textbf{10} (2000), no.~4, 829--862, \href
  {http://dx.doi.org/10.1007/PL00001640} {\path{doi:10.1007/PL00001640}}.

\bibitem{MS}
D.~McDuff and D.~Salamon, \emph{{$J$}-holomorphic curves and symplectic
  topology}, second ed., American Mathematical Society Colloquium Publications,
  vol.~52, American Mathematical Society, Providence, RI, 2012.

\bibitem{Oh}
Y.-G. Oh, \emph{Riemann-{H}ilbert problem and application to the perturbation
  theory of analytic discs}, Kyungpook Math. J. \textbf{35} (1995), no.~1,
  39--75.

\bibitem{PandharipandeSolomonTessler}
R.~{Pandharipande}, J.~P. {Solomon}, and R.~J. {Tessler}, \emph{{Intersection
  theory on moduli of disks, open KdV and Virasoro}}, arXiv e-prints (2014), to
  appear in Geom. Topol., \href {http://arxiv.org/abs/1409.2191}
  {\path{arXiv:1409.2191}}.

\bibitem{RobbinRuanSalamon}
J.~W. Robbin, Y.~Ruan, and D.~A. Salamon, \emph{The moduli space of regular
  stable maps}, Math. Z. \textbf{259} (2008), no.~3, 525--574, \href
  {http://dx.doi.org/10.1007/s00209-007-0237-x}
  {\path{doi:10.1007/s00209-007-0237-x}}.

\bibitem{ST4}
J.~P. Solomon and S.~B. Tukachinsky, \emph{Differential forms on orbifolds with
  corners}, to appear in J. Topol. Anal., \href
  {http://arxiv.org/abs/2011.10030} {\path{arXiv:2011.10030}}, \href
  {http://dx.doi.org/10.1142/S1793525323500048}
  {\path{doi:10.1142/S1793525323500048}}.

\bibitem{ST3}
J.~P. {Solomon} and S.~B. {Tukachinsky}, \emph{{Relative quantum cohomology}},
  to appear in J. Eur. Math. Soc., \href {http://arxiv.org/abs/1906.04795}
  {\path{arXiv:1906.04795}}.

\bibitem{ST2}
J.~P. Solomon and S.~B. Tukachinsky, \emph{Point-like bounding chains in open
  {G}romov-{W}itten theory}, Geom. Funct. Anal. \textbf{31} (2021), no.~5,
  1245--1320, \href {http://dx.doi.org/10.1007/s00039-021-00583-3}
  {\path{doi:10.1007/s00039-021-00583-3}}.

\bibitem{WehrheimWoodward}
K.~{Wehrheim} and C.~{Woodward}, \emph{{Orientations for pseudoholomorphic
  quilts}}, arXiv e-prints (2015), \href {http://arxiv.org/abs/1503.07803}
  {\path{arXiv:1503.07803}}.

\bibitem{Zernik2}
A.~N. Zernik, \emph{{Moduli of open stable maps to a homogeneous space}}, arXiv
  e-prints (2017), \href {http://arxiv.org/abs/1709.07402}
  {\path{arXiv:1709.07402}}.

\end{thebibliography}
\bibliographystyle{../../amsabbrvcnobysame}
\end{document}